\newtheorem{theorem}{Theorem}[section] 
\newtheorem{lemma}[theorem]{Lemma}     
\newtheorem{corollary}[theorem]{Corollary}
\newtheorem{proposition}[theorem]{Proposition}
\newtheorem{remark}{Remark}
\newtheorem{definition}{Definition}
\newtheorem{conjecture}{Conjecture}
\numberwithin{equation}{section}
\newcommand{\p}{\mathfrak{p}}
\newcommand{\m}{\mathfrak{m}}
\newcommand{\cO}{\mathcal{O}}
\newcommand{\cH}{\mathcal{H}}
\newcommand{\F}{\mathbb F}
\newcommand{\Q}{\mathbb Q}
\newcommand{\R}{\mathbb R}
\newcommand{\Z}{\mathbb Z}
\newcommand{\A}{\mathbb A}
\newcommand{\rInj}{\mathrm{Inj}}
\newcommand{\rsoc}{\mathrm{soc}}
\newcommand{\red}{\mathrm{red}}
\newcommand{\Sp}{\mathrm{Sp}}
\newcommand{\Ord}{\mathrm{Ord}}
\newcommand{\ra}{\rightarrow}
\newcommand{\End}{\mathrm{End}}
\newcommand{\Hom}{\mathrm{Hom}}
\newcommand{\Ext}{\mathrm{Ext}}
\newcommand{\GL}{\mathrm{GL}}
\newcommand{\SL}{\mathrm{SL}}
\newcommand{\Gal}{\mathrm{Gal}}
\newcommand{\id}{\mathrm{Id}}
\newcommand{\bFp}{\overline{\F}_p}
\newcommand{\bQp}{\overline{\Q}_p}
\newcommand{\Sym}{\mathrm{Sym}}
\newcommand{\Ind}{\mathrm {Ind}}
\providecommand{\cInd}{\mathrm{c}\textrm{-}\mathrm{Ind}}
\newcommand{\brho}{\overline{\rho}}
\newcommand{\summ}{\displaystyle\sum\limits}
\providecommand{\ligne}{\textbf{---}}
\newcommand{\matr}[4]{\begin{pmatrix}{#1}&{#2}\\ {#3}&{#4}\end{pmatrix}}
\newcommand{\smatr}[4]{\bigl(\begin{smallmatrix} {#1}& {#2}\\ {#3}&{#4}\end{smallmatrix}\bigl)}
\def\red#1{\textcolor{red}{#1}}
\begin{document}

\title[An application of a theorem of Emerton]
{An application of a theorem of Emerton to mod $p$ representations of $\GL_2$}

\author{Yongquan HU }
\date{}

\maketitle

\begin{abstract}
Let $p$ be a prime and $L$ be a finite extension of $\Q_p$. We study the ordinary parts of   $\GL_2(L)$-representations arised in the mod $p$ cohomology of  Shimura curves attached to indefinite division algebras which splits at a finite place above $p$. 
The main tool of the proof is a theorem of Emerton \cite{Em3}. 
\end{abstract}
\tableofcontents

\section{Introduction}

Let $p$ be a prime number and $L$ a finite extension of $\Q_p$. Let $G=\GL_2(L)$. If $L=\Q_p$, the work of Barthel-Livn\'e \cite{BL} and of Breuil \cite{Br03} gave a complete classification of  irreducible smooth representations of $G$ over $\bFp$ with a central character (this last restriction is now removed by Berger \cite{Be}). However, when $F\neq \Q_p$, the situation is much more complicated and a large part of the theory is still mysterious. The main difficulty lies in the study of supersingular representations, which are  irreducible smooth representations of $G$ which do not arise as subquotients of  parabolic inductions. For example, when $[L:\Q_p]=2$, Schraen has shown that supersingular representations are not of finite presentation  \cite{Sc} (similar result was first proved in \cite{Hu12} if $L$ is a finite extension of $\F_p[[t]]$).  

Though the general theory of smooth representations of $G$ could be very weird,  when $L$ is unramified over $\Q_p$,   Breuil and Pa\v{s}k\=unas were able to construct some `nicer' ones in \cite{BP}. Precisely, they constructed families of admissible smooth representations (by local methods) which are related to two dimensional continuous $\bFp$-representations of $\Gal(\bQp/L)$ via the Buzzard-Diamond-Jarvis conjecture \cite{BDJ}. Recent work of Emerton-Gee-Savitt \cite{EGS}
  shows that this construction is indeed very important and tightly related to the mod $p$ local Lan{corollary}glands program.

  To state our main result  we need some notations. 
Let $F$ be a totally real field and $D$ a quaternion algebra with center $F$ which splits at exactly one infinite place. One can associate to $D$ a system of Shimura curves $(X_U)_U$ indexed by open compact subgroups of $(D\otimes _{\Q}\A_{f})^{\times}$ (where $\A_f$ denotes the ring of finite ad\`eles of $\Q$), which are projective and smooth over $F$. Put
\[S^D(\bFp):=\varinjlim H^1_{\textrm{\'et}}(X_{U,\overline{\Q}},\bFp)\]
where the inductive limit is taken over all open compact subgroups of $(D\otimes_{\Q}\A_f)^{\times}$. 
 Let $\brho:\Gal(\overline{\Q}/F)\ra \GL_2(\bFp)$ be an irreducible, continuous, totally odd representation.  Assume moreover $\brho$ is modular, which means that $$\pi^D(\brho):=\Hom_{\Gal(\overline{\Q}/F)}(\brho,S^D(\bFp))$$ is non-zero.  A conjecture of Buzzard, Diamond and Jarvis \cite{BDJ} says that the space $\pi^D(\brho)$ decomposes as a restricted tensor product 
 \[\pi^D(\brho) \cong {\bigotimes}_{w}'\pi_{w}\]
 where each factor $\pi_{w}$ is an admissible smooth representation of $(D\otimes_FF_w)^{\times}$ and  depends only on the restriction of $\brho$ at $w$. Note that, when $w|p$, the local factor $\pi_w$ is supposed to be the right representation in the mod $p$ local Langlands (or Jacquet-Langlands) program, and many important properties about it have been proved, see e.g. \cite{Gee11}, \cite{BD11}, \cite{EGS}.

\medskip

In this article we prove some extra property about $\pi_w$, $w|p$, when the restriction of $\brho$ at $w$ is reducible indecomposable and generic (see \S\ref{section-local}), and when $F$ is unramified at $w$. In this case, it is hoped that $\pi_w$ has a filtration of length $f:=[F_w:\Q_p]$ of the form (where $(\pi_i)_i$ denote the graded pieces of the filtration)
\begin{equation}\label{equation-intro}\pi_0\ \ligne\ \pi_1\ \ligne\ \cdots\ \ligne\ \pi_f\end{equation}
such that $\pi_i$ is a principal series if $i\in\{0,f\}$ and supersingular otherwise.
Our main theorem is as follows.
\begin{theorem} \label{theorem-main}
Assume the decomposition $\pi^D(\brho)\cong \otimes'_w\pi_w$ holds. Let $w$ be a place above $p$.  Assume that $D$ splits at $w$, and $\brho_w$ is reducible  indecomposable and generic. Assume that  $F$ is unramified at $w$ and  $[F_w:\Q_p]\geq 2$. Then $\pi_{w}$ contains a unique sub-representation $\pi$ which is of length 2 and which fits into an exact sequence
\[0\ra \pi_0\ra \pi\ra \pi_1\ra0\]
such that $\pi_0$ is a principal series and $\pi_1$ is a supersingular representation. 
\end{theorem}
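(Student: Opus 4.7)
My plan is to apply Emerton's ordinary part functor $\Ord_B$ (where $B\subset G=\GL_2(F_w)$ is a Borel with torus $T$) and its adjunction with parabolic induction, combined with the local structural input on $\pi_w$ coming from \cite{BP} and \cite{EGS}. Fix a non-split sequence $0\ra\chi_1\ra\brho_w\ra\chi_2\ra 0$.

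\emph{Constructing $\pi_0\hookrightarrow\pi_w$.} Emerton's theorem from \cite{Em3}, applied to the completed cohomology of the Shimura curve tower at $w$, identifies the ordinary part of the $\brho$-isotypic piece with an explicit $T$-representation. After matching conventions with \cite{BDJ}, this should determine $\Ord_B(\pi_w)$ as a single smooth character $\chi$ of $T$ built from $(\chi_2,\chi_1)$. By Emerton's adjunction between parabolic induction and $\Ord_B$,
\[\Hom_G\bigl(\Ind_B^G(\chi),\pi_w\bigr)\cong\Hom_T\bigl(\chi,\Ord_B(\pi_w)\bigr)\neq 0,\]
so there is a non-zero map $\pi_0:=\Ind_B^G(\chi)\ra\pi_w$. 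The genericity hypothesis forces $\pi_0$ to be irreducible, hence this map is an embedding, yielding the desired principal series subrepresentation.

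\emph{Building $\pi$ of length 2.} Set $Q:=\pi_w/\pi_0$, and let $\pi_1$ denote the supersingular representation predicted by (\ref{equation-intro}) at the second layer, made explicit via \cite{BP} and \cite{EGS}. The goal is to show that $\rsoc_G(Q)$ contains $\pi_1$ with multiplicity exactly one. Granting this, define $\pi\subset\pi_w$ to be the preimage of this copy of $\pi_1$; then $\pi$ has length $2$, sits in $0\ra\pi_0\ra\pi\ra\pi_1\ra 0$, and is unique by the multiplicity-one property. The non-splitting of this sequence will follow from showing that $\rsoc_G(\pi_w)=\pi_0$, which should also be extractable from Emerton's theorem applied to the socle of the completed cohomology (so that no supersingular can appear as a direct summand in the socle of $\pi_w$).

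\emph{Main obstacle.} The delicate step is the socle computation of $Q$. Since $\Ord_B$ annihilates every supersingular, Emerton's theorem cannot directly detect $\pi_1\subset Q$, nor bound its multiplicity there. I expect the hard work to be combining (a) the explicit $\GL_2(\cO_{F_w})$-socle and Serre-weight structure of $\pi_w$ supplied by \cite{BP} and \cite{EGS}, with (b) the control on principal-series subobjects coming from Step 1, so as to exclude both a second copy of $\pi_1$ and any other irreducible from $\rsoc_G(Q)$. This interplay between local ($K$-socle, weights) and global (ordinary-part) information is likely to be the technical core of the proof.
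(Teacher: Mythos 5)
Your high-level strategy (use Emerton's theorem to control $\Ord_P(\pi_w)$, then do a local analysis) is the same as the paper's, but both of your main steps have genuine gaps. First, Emerton's theorem does not ``identify $\Ord_P(\pi_w)$ with an explicit character''. The input (Theorem \ref{theorem-Emerton}, i.e. \cite[Thm. 5.6.11]{Em3}) says only that the Galois action on the cokernel of the inclusion of the $(\mathrm{ab},S_w)$-part into $\Ord_P$ of the localized completed cohomology factors through $G_{F_w}^{\rm ab}$. Extracting from this a statement about $\Ord_P(\pi_w)$ is precisely the content of Theorem \ref{thm-main}: a non-split self-extension $\epsilon_\delta$ of a $T$-character inside $\Ord_P(\pi_w)$ would give a $G_{F_w}\times S_w$-subquotient $\brho_w\otimes\epsilon_\delta$ whose $(\mathrm{ab},S_w)$-part is at most one-dimensional, and because $\brho_w$ is \emph{indecomposable} the Galois action on the resulting cokernel cannot factor through $G_{F_w}^{\rm ab}$. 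This argument, which is where the hypothesis ``reducible indecomposable'' actually enters, yields only that $\Ord_P(\pi_w)$ is \emph{semi-simple}; one-dimensionality needs the further input that the $\GL_2(\cO_{F_w})$-socle of $\pi_w$ is multiplicity free (\cite[Thm. 3.7.1]{BD11}). Your proposal never invokes the indecomposability of $\brho_w$ and gives no argument for the claimed identification, so the key mechanism is missing.

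Second, the step you defer as the ``main obstacle'' is the real content of the theorem, and the route you sketch would not work as stated: supersingular representations for $[F_w:\Q_p]\geq 2$ are not classified and the filtration (\ref{equation-intro}) is itself conjectural, so there is no explicit $\pi_1$ supplied by \cite{BP} or \cite{EGS} whose multiplicity in $\rsoc_G(Q)$ you could compute. The paper argues differently (Proposition \ref{prop-extension-typeA}): every irreducible subobject of $Q=\pi_w/\pi_0$ is supersingular, because by the $\Ext^1$ computations of \cite[\S 8]{BP} (Proposition \ref{prop-extension-PS}, valid since $F_w\neq\Q_p$) a non-supersingular irreducible with a non-split extension by $\pi_0$ must be $\pi_0$ itself, and such a self-extension has the form $\Ind_P^G\epsilon_\delta$, forcing $\dim\Ord_P(\pi_w)\geq 2$; existence of an irreducible subobject comes from Vign\'eras's admissibility theorem. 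Uniqueness is then a separate $K$-socle/Serre-weight analysis (Proposition \ref{Prop-K-extension}, Lemma \ref{lemma-converse}, and weight-cycling as in \cite[Thm. 15.4]{BP}, under hypotheses verified via \cite{EGS} and \cite{BD11}). Finally, the fact that $\rsoc_G(\pi_w)$ is exactly the principal series $\pi_0$ is Lemma \ref{lemma-V-socle}, proved by weight-cycling in the Breuil--Pa\v{s}k\={u}nas construction; it cannot be ``extracted from Emerton's theorem'' as you propose, since, as you note yourself, $\Ord_P$ kills supersingular representations and hence cannot rule out a supersingular summand in the socle.
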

In other words, we prove that $\pi_w$ contains the first two graded pieces in (\ref{equation-intro}). The main observation in the proof of Theorem \ref{theorem-main} is a theorem of Emerton \cite{Em3}, which allows us to control the ordinary part of $\pi_w$.   

Back to the local situation,  we get the following consequence.

\begin{corollary}\label{cor-intro}
Let $L$ be a finite unramified extension of $\Q_p$ of degree $f\geq 2$. There exist  a principal series $\pi_0$  and a supersingular representation $\pi_1$ such that 
\[\Ext^1_{G}(\pi_1,\pi_0)\neq0.\] 
If moreover $[L:\Q_p]=2$, and if $\pi_2$ denotes the $f$-th smooth dual of $\pi_0$ (see \cite[Prop. 5.4]{Ko}), then   
\[  \Ext^1_G(\pi_2,\pi_1)\neq0.\]
\end{corollary}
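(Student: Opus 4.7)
The plan is to reduce both parts of the corollary to Theorem \ref{theorem-main} via a suitable globalization, and then to use Kohlhaase's higher smooth duality \cite{Ko} to pass from the first claim to the second.

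\emph{First assertion.} I would globalize $L$: produce a totally real field $F$ with a place $w \mid p$ at which $F$ is unramified and $F_w \cong L$, an indefinite quaternion algebra $D$ over $F$ splitting at $w$ and at exactly one infinite place, and an irreducible, totally odd, continuous, modular representation $\brho : \Gal(\overline{\Q}/F) \to \GL_2(\bFp)$ whose restriction $\brho_w$ is reducible indecomposable and generic. Such globalizations are by now standard in the area (after Gee, Emerton--Gee--Savitt, etc.), and one can arrange the behaviour at the other places above $p$ to be compatible with the factorization $\pi^D(\brho) \cong \bigotimes'_v \pi_v$. Granting that factorization, Theorem \ref{theorem-main} produces a non-split extension
\[ 0 \to \pi_0 \to \pi \to \pi_1 \to 0 \]
of admissible smooth $\bFp$-representations of $G = \GL_2(L)$ with $\pi_0$ principal series and $\pi_1$ supersingular, which is precisely the assertion $\Ext^1_G(\pi_1, \pi_0) \neq 0$.

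\emph{Second assertion ($f = 2$).} I would apply Kohlhaase's smooth duality functors $S^j$ (\cite[Prop.\ 5.4]{Ko}) to the extension obtained above. These functors have cohomological dimension $f$ on the admissible smooth category, and the top functor $S^f$ should behave like a Serre duality, relating $\Ext^i_G(A, B)$ and $\Ext^{f-i}_G(S^f B, S^f A)$. For $f = 2$ and $i = 1$, non-vanishing of $\Ext^1_G(\pi_1, \pi_0)$ thereby translates into non-vanishing of $\Ext^1_G(\pi_2, S^f \pi_1)$, where $\pi_2 := S^f \pi_0$ by definition. It then remains to identify $S^f \pi_1$ with $\pi_1$ (possibly up to a central twist), which is a self-duality statement for the supersingular representation $\pi_1$ in the $f=2$ case, to be extracted from the explicit analysis of higher smooth duals on supersingulars in \cite{Ko}.

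The main obstacle is the second step: the first assertion reduces, modulo the globalization lemma, to direct application of Theorem \ref{theorem-main}, whereas the second requires genuinely new input about the behaviour of the higher smooth duals on the graded pieces of (\ref{equation-intro})---concretely, the identification of $\pi_2 = S^f \pi_0$ with the ``opposite'' principal series appearing in \eqref{equation-intro} and of $S^f \pi_1$ with $\pi_1$---together with the correct formulation of Ext-duality for $S^f$ on admissible smooth $\bFp$-representations of $G$.
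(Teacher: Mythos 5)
Both halves of your proposal run into genuine gaps. For the first assertion, you invoke Theorem \ref{theorem-main} "granting that factorization", i.e.\ assuming the decomposition (\ref{equation-BDJ}) of Conjecture \ref{conjecture-BDJ}. But that decomposition is precisely the conjectural input of Theorem \ref{theorem-main}, and Corollary \ref{cor-intro} is stated (and proved in the paper) unconditionally: the point of the paper's proof is to \emph{remove} the hypothesis (\ref{equation-BDJ}). It does so by replacing the hypothetical local factor $\pi_v$ with the representation $\pi_v'$ explicitly constructed by Breuil--Diamond \cite{BD11}, which comes with an unconditional $G_{F_v}\times\GL_2(F_v)$-equivariant embedding $\brho_v\otimes\pi_v'\hookrightarrow S^D(U^v,k_E)[\m_{\brho}]$; since Lemma \ref{lemma-GxS} applies to subrepresentations, the proof of Theorem \ref{thm-main} goes through verbatim for $\pi_v'$, giving that $\Ord_P(\pi_v')$ is semisimple, and the multiplicity-one statement of \cite[Thm.\ 3.7.1]{BD11} makes it one-dimensional, so Proposition \ref{prop-extension-typeA} applies. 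Your globalization step is fine and parallel to what the paper needs, but as written your argument only proves the corollary conditionally on BDJ.

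For the second assertion your route is genuinely different from the paper's and, as you yourself flag, rests on two statements that are not available: a Serre-type $\Ext$-duality for Kohlhaase's functors $S^j$ relating $\Ext^i_G(A,B)$ to $\Ext^{f-i}_G(S^fB,S^fA)$, and the self-duality $S^f\pi_1\cong\pi_1$ for the supersingular constituent. Neither is proved in \cite{Ko} (whose computations of higher smooth duals of supersingular representations concern $\GL_2(\Q_p)$, not $L\neq\Q_p$, where supersingulars remain largely inaccessible), so this step would not close. The paper's argument avoids duality on $\Ext$ groups entirely: it is Proposition \ref{prop-extension-typeB}, which applies Emerton's derived ordinary parts to the non-split extension $0\to\pi_0\to V\to\pi_1\to 0$, uses Lemma \ref{prop-HOrd} ($\R^2\Ord_PV=0$, via the adjunction showing the Jacquet-module map must vanish) to get a surjection $\R^1\Ord_P\pi_1\twoheadrightarrow\R^2\Ord_P\pi_0\cong\chi\alpha^{-1}$, upgrades this to a subcharacter, and concludes $\Ext^1_G(\Ind_{\overline{P}}^G\chi\alpha^{-1},\pi_1)\neq0$ from the long exact sequence of \cite[(3.7.5)]{Em2}. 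Kohlhaase's results enter only through Remark \ref{remark-Ko}, to identify the resulting principal series $\Ind_P^G\chi^s\alpha$ with the $f$-th smooth dual $\pi_2$ of $\pi_0$; no duality statement about $\pi_1$ is needed. If you want to salvage your plan, you should replace the duality step by this derived-$\Ord_P$ argument.
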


The existence of extensions in Corollary \ref{cor-intro} is not known before. Remark that such extensions do not exist when $L=\Q_p$ (\cite{Pa10}).  
We also prove, in \S\ref{section-appendix}, that when $L$ is a local field of characteristic $p$ there is no non-trivial extension of a principal series by a supersingular representation.

\medskip

The paper is organized as follows. In \S\ref{section-preparation} we prove some necessary results about smooth $\bFp$-representations of $G$, especially the structure of (irreducible) principal series. In \S3, we recall the construction of  \cite{BP} and show, under certain assumption, that the representations they constructed behave well. In \S4, we show that this assumption is satisfied in certain global situation.
In the appendix (\S\ref{section-appendix}), we show the counterpart of Theorem \ref{theorem-main} is false if $L$ is of characteristic $p$.

\section{Representation theoretic preparation}\label{section-preparation}
In this section, $L$ denotes a finite extension of $\Q_p$, with ring of integers $\cO_L$, maximal ideal $\p_L$, and residue field (identified with) $\F_q=\F_{p^f}$.   Fix a uniformiser $\varpi$ of $L$; we take $\varpi=p$ when $L$ is unramified over $\Q_p$. For $\lambda\in\F_q$, $[\lambda]\in\cO_L$ denotes its Teichm\"uller lifting.

 Let  $G=\GL_2(L)$, and define the following subgroups of $G$:
\[P=\matr{*}*0*, \ \ \overline{P}=\matr*0**,\ \ N=\matr1*01,\ \ T=\matr*00*,\]
and let $Z$ be the center of $G$.

Let $K=\GL_2(\cO_L)$ and $K_1$ be the kernel of the reduction morphism $K\twoheadrightarrow \GL_2(\F_q)$. 
Let $I\subset K$ denote the (upper) Iwahori subgroup and $I_1\subset I$ be the pro-$p$-Iwahori subgroup. Let $N_0=N\cap K$, $T_0=T\cap K$ and $T_1=T\cap K_1$.

We call a weight an irreducible representation of $K$ over $\bFp$, which is always an inflation of an irreducible representation of $\GL_2(\F_q)$. A weight is  isomorphic to (\cite[Prop. 1]{BL})
\[\Sym^{r_0}\bFp^2\otimes_{\bFp}(\Sym^{r_1}\bFp^2)^{\mathrm{Fr}}\otimes\cdots\otimes_{\bFp}(\Sym^{r_{f-1}}\bFp^2)^{\mathrm{Fr}^{f-1}}\otimes_{\bFp}{\det}^a\]
where  $0\leq r_i\leq q-1$, $0\leq a\leq q-2$ and $\mathrm{Fr}: \smatr{a}bcd\mapsto \smatr{a^p}{b^p}{c^p}{d^p}$ is the Frobenius on $\GL_2(\F_q)$.  We denote this representation by $(r_0,...,r_{f-1})\otimes {\det}^a$.

\subsection{Principal series} \label{subsection-PS}
By the work of Barthel-Livn\'e \cite{BL}, smooth  irreducible $\bFp$-representations\footnote{Although we choose to work with $\bFp$-representations in this section, all the results hold true if we replace $\bFp$ by a sufficiently large finite extension of $\F_p$.} of $G$ with a central character fall into four classes:
\begin{enumerate}
\item[(i)] one-dimensional representations, i.e. characters.

\item[(ii)]  principal series $\Ind_P^G\chi$, with $\chi\neq \chi^s$ where $\chi$ is a smooth character of $T$ inflated to $P$ and $\chi^s$ is the character of $T$ defined as $\chi^s(\smatr a00d)=\chi(\smatr d00a)$.
\item[(iii)] special series, i.e. twists of the Steinberg representation $\Sp$.

\item[(iv)] supersingular representations.
\end{enumerate} 
In this paper, only the second and fourth classes will be involved.  When we talk about a principal series $\Ind_P^G\chi$, we implicitly mean $\chi\neq \chi^s$.   Recall first the following result from \cite[\S8]{BP}.

\begin{proposition} \label{prop-extension-PS}
 Let $\pi$ be a principal series of $G$.

(i)   We have an isomorphism of  $\bFp$-vector spaces $\Hom(L^{\times},\bFp)\cong\Ext^1_{G,\zeta}(\pi,\pi)$.\footnote{ $\Ext^1_{G,\zeta}$ means that we consider extensions with a central character and $\zeta$  is the central character of $\pi$.}     

(ii) Assume $L\neq \Q_p$. Let  $\pi'$ be a smooth irreducible non-supersingular representation of $G$, then $\Ext^1_G(\pi',\pi)=0$ except when $\pi'\cong \pi$. 
\end{proposition}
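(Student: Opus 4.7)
The natural tool is Emerton's ordinary parts functor $\Ord$ and its derived functors, together with Emerton's adjunction, which yields a spectral sequence
\[E_2^{i,j}=\Ext^i_{T,\zeta}(\sigma,R^j\Ord\,\pi')\Rightarrow \Ext^{i+j}_{G,\zeta}(\Ind_P^G\sigma,\pi')\]
for any smooth character $\sigma$ of $T$ (with central character $\zeta$) and any admissible $\pi'$. Emerton computes $\Ord(\Ind_P^G\chi)=\chi$ for $\chi\neq\chi^s$, while $R^1\Ord(\Ind_P^G\chi)$ is a specific twist of $\chi^s$, so that under the assumption $\chi\neq\chi^s$ these two $T$-characters are distinct.

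For part (i), plug $\sigma=\chi$ and $\pi'=\pi$ into the spectral sequence. The assumption $\chi\neq\chi^s$ kills $\Ext^0_{T,\zeta}(\chi,R^1\Ord\,\pi)$, and once one checks that the higher-degree $E_2$-terms do not contribute in total degree $1$, the sequence collapses to an isomorphism
\[\Ext^1_{G,\zeta}(\pi,\pi)\ \cong\ \Ext^1_{T,\zeta}(\chi,\chi).\]
The right-hand side is a Hochschild-Serre computation on the quotient $T/Z\cong L^{\times}$ (via $\smatr a00d\mapsto ad^{-1}$): self-extensions of a smooth character of $T$ with fixed central character are classified by smooth $1$-cocycles $L^{\times}\to\bFp$, giving $\Hom(L^{\times},\bFp)$.

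For part (ii) we distinguish cases. If $\pi'=\Ind_P^G\chi'$ is a principal series with $\pi'\not\cong\pi$, then $\chi'\notin\{\Ord\pi,\,R^1\Ord\pi\}$, so the $E_2^{0,j}$-terms in the spectral sequence above vanish for $j\leq 1$; the remaining contribution $\Ext^1_{T,\zeta}(\chi',\chi)$ vanishes because distinct smooth characters of $T$ have no $\Ext^1$. If $\pi'$ is a character or a twist of the Steinberg $\Sp$, we insert $\pi'$ into the tautological exact sequence $0\to\eta\to\Ind_P^G\eta\to\Sp\otimes\eta\to 0$ and run the long exact sequence of $\Ext^{\bullet}_G(-,\pi)$, reducing to the principal series case together with $\Hom$-group computations; this is where the hypothesis $L\neq\Q_p$ is needed, since for $L=\Q_p$ there are well-known non-trivial extensions between Steinberg and characters that would obstruct the vanishing.

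The main obstacle is the identification of $R^1\Ord$ on a principal series and the ensuing bookkeeping of the spectral sequence differentials, both of which rely on Emerton's explicit computations; the subtlest point is the Steinberg case of (ii), where the hypothesis $L\neq\Q_p$ is indispensable in order to kill the residual $\Ext^1$-obstructions.
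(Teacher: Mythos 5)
Your route is genuinely different from the paper's: the paper simply invokes \cite[Cor. 8.2(ii)]{BP} for (i) and \cite[Thm. 8.1]{BP} together with \cite[Thm. 7.16(i)]{BP} for (ii), results which Breuil--Pa\v{s}k\=unas prove by a direct analysis of extensions, not via ordinary parts. Unfortunately your argument has a real gap at exactly the point you flag as ``the main obstacle''. The whole bookkeeping rests on the assertion that $\R^1\Ord_P$ of a principal series is an explicit character (a twist of $\chi^s$). Emerton's computations give only the two extreme degrees: $\Ord_P(\Ind_P^G\chi)\cong\chi^s$ (note the $s$ -- with the paper's conventions the adjunction pairs $\Ord_P$ with $\Ind_{\overline{P}}^G$) and $\R^{n}\Ord_P(\Ind_P^G\chi)\cong\chi\otimes\alpha^{-1}$ with $n=[L:\Q_p]$; see Proposition \ref{prop-Ord}(iii). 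When $L=\Q_p$ these are degrees $0$ and $1$ and your collapse works, but part (ii) assumes $L\neq\Q_p$, so $n\geq 2$ and $\R^1\Ord_P(\pi)$ is an \emph{intermediate} derived functor: it is not computed by Emerton, it is not a priori a character (it is governed by $H^1(N_0,\pi)$, which for $f\geq 2$ is large), and it is precisely the term $E_2^{0,1}=\Hom_T(\,\cdot\,,\R^1\Ord_P\pi)$ whose vanishing you need both for the isomorphism in (i) (surjectivity of $\Ext^1_{T,\zeta}(\chi^s,\chi^s)\to\Ext^1_{G,\zeta}(\pi,\pi)$; injectivity is the easy half and is all the paper ever uses via the explicit construction (\ref{equation-Ind(delta)})) and for the vanishing in (ii) when $\pi'$ is a principal series. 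Deferring this to ``Emerton's explicit computations'' is not available in this generality, so neither conclusion follows from the spectral sequence as written.

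Two further points. First, the character/Steinberg cases of (ii) are only a reduction scheme: after inserting $0\to\eta\to\Ind_P^G\eta\to\Sp\otimes\eta\to 0$ you still must kill $\Ext^1_G(\eta,\pi)$ and control $\Ext^1_G(\Ind_P^G\eta,\pi)$ for the \emph{reducible} induction, and the latter again runs into the unknown $\R^1\Ord_P\pi$; this is where \cite[Thm. 8.1]{BP} does genuine work, and your remark that $L\neq\Q_p$ ``kills the residual obstructions'' is an assertion, not an argument. Second, in (i) the case $L=\Q_p$ is allowed, and there the possible coincidence of $\chi^s$ with the character $\chi\alpha^{-1}$ occurring in top-degree ordinary parts (the two exceptional cases excluded in \cite[Cor. 8.2]{BP}, ruled out here because $\pi$ is assumed irreducible) would also have to be addressed before the sequence collapses. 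So as it stands the proposal is an interesting alternative strategy, but it does not yet constitute a proof.
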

\begin{proof}
 (i) follows from \cite[Cor. 8.2(ii)]{BP}, noting that $\pi$ is irreducible by our convention, hence the two cases excluded in \emph{loc. cit.} can not happen. (ii) follows from \cite[Thm.8.1]{BP} using \cite[Thm. 7.16(i)]{BP}.
\end{proof} 

We recall the construction of the isomorphism in Proposition \ref{prop-extension-PS}(i).  Let $\delta\in \Hom(L^{\times},\bFp)$. We lift $\delta$ to a homomorphism of $P$ to $\bFp$ via $P\twoheadrightarrow L^{\times}$ given by $\smatr{a}b0d\mapsto ad^{-1}$. Write $\pi=\Ind_P^G\chi$ and let $\epsilon_{\delta}$ be the extension 
\begin{equation}\label{equation-delta}
 0\ra \chi\ra \epsilon_{\delta}\ra\chi\ra0\end{equation}
corresponding to $\delta$. Explicitly, $\epsilon_{\delta}$ has a basis $\{v,w\}$ with the action of $P$ represented by $\smatr{\chi}{\chi\delta}0\chi$: $b\cdot v=\chi(b)v$ and $b\cdot w=\chi(b)w+\chi(b)\delta(b)v$ for $b\in P$. Then, inducing  to $G$ we obtain an exact sequence
\begin{equation}\label{equation-Ind(delta)}0\ra \Ind_P^G\chi\ra \Ind_P^G\epsilon_{\delta}\ra\Ind_P^G\chi\ra0\end{equation}
which is the element in $\Ext^1_{G}(\pi,\pi)$ corresponding to $\delta$. Since by definition $\delta$ is trivial on $Z$, the representation $\Ind_{P}^G\epsilon_{\delta}$ has a central character.

Taking $K_1$-invariants, (\ref{equation-Ind(delta)}) induces a long exact sequence
\[0\ra (\Ind_P^G\chi)^{K_1}\ra (\Ind_P^G\epsilon_{\delta})^{K_1}\ra (\Ind_P^G\chi)^{K_1}\overset{\partial_{\delta}}{\ra} H^1(K_1/Z_1,\Ind_P^G\chi).\]

\begin{proposition}\label{prop-partial_delta}
Notations are as above. The map $\partial_{\delta}$ is zero if and only if $\delta$ is an unramified homomorphism, i.e. $\delta$ is trivial on $\cO_L^{\times}$. Moreover, if $\partial_\delta$ is nonzero, then it is an injection.
\end{proposition}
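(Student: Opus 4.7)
The plan is to compute the long exact sequence obtained from (\ref{equation-Ind(delta)}) on $K_1$-invariants explicitly, using the Mackey decomposition
\[K = \bigsqcup_{\lambda \in \bP^1(\F_q)} (P \cap K)\, g_\lambda\, K_1,\]
with representatives $g_\lambda = \smatr{1}{0}{[\lambda]}{1}$ for $\lambda \in \F_q$ and $g_\infty = \smatr{0}{1}{1}{0}$, and then to read off $\ker(\partial_\delta)$ as the image of the restriction map $(\Ind_P^G\epsilon_\delta)^{K_1} \to (\Ind_P^G\chi)^{K_1}$. We may assume $\chi|_{T_1} = 1$ (otherwise $(\Ind_P^G\chi)^{K_1} = 0$ and the statement is vacuous). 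Under this assumption the values $\beta(g_\lambda) \in \bFp$ of $\beta \in (\Ind_P^G\chi)^{K_1}$ can be prescribed independently, identifying $(\Ind_P^G\chi)^{K_1}$ with $\bFp^{q+1}$.

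Writing a section of $\Ind_P^G\epsilon_\delta$ in the basis $\{v, w\}$ of $\epsilon_\delta$ as $\alpha v + \beta w$, the $P$-equivariance rules read
\[\beta(pk) = \chi(p)\beta(k), \qquad \alpha(pk) = \chi(p)\alpha(k) + \chi(p)\delta(p)\beta(k).\]
Imposing $K_1$-invariance of $\alpha$ and $\beta$ and working through the consistency condition on each double coset, one finds that, given $\beta \in (\Ind_P^G\chi)^{K_1}$, a $K_1$-invariant lift $\alpha$ exists if and only if
\[\delta(h)\,\beta(g_\lambda) = 0 \quad \text{for every } \lambda \in \bP^1(\F_q) \text{ and every } h \in H_\lambda := (P\cap K)\cap g_\lambda K_1 g_\lambda^{-1}.\]
Hence $\ker(\partial_\delta)$ is precisely the subspace of $(\Ind_P^G\chi)^{K_1}$ cut out by these constraints.

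Finally, a short conjugation computation shows that for every $\lambda$ the group $H_\lambda$ contains $\smatr{1+\p_L}{0}{0}{1}$, so $\delta|_{H_\lambda}$ is controlled by $\delta|_{1+\p_L}$; since $\cO_L^\times/(1+\p_L) = \F_q^\times$ has order prime to $p$, this restriction vanishes precisely when $\delta$ is unramified. Thus if $\delta$ is unramified all constraints are vacuous and $\partial_\delta = 0$; if $\delta$ is ramified then $\delta|_{H_\lambda} \neq 0$ for every $\lambda$, forcing $\beta(g_\lambda) = 0$ for every $\lambda$ and hence $\beta = 0$, which gives the injectivity of $\partial_\delta$. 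The only mildly technical point is the explicit description of the $H_\lambda$ by conjugation (which turns out to be essentially uniform in $\lambda$, though $H_0$ and $H_\infty$ differ slightly from $H_\lambda$ for $\lambda \in \F_q^\times$); the rest is just bookkeeping with the Mackey decomposition.
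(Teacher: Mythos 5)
Your proof is correct and takes essentially the same route as the paper: the paper likewise evaluates $K_1$-invariant sections at the coset representatives $\smatr{0}{1}{1}{0}$ and $\smatr{1}{0}{[\lambda]}{1}$ and shows their values must be fixed by elements $\smatr{1+\varpi a}{0}{0}{1}$, which forces the $w$-component to vanish exactly when $\delta$ is ramified (giving both vanishing of the restriction map's image and hence injectivity of $\partial_\delta$), while the unramified direction is handled in the paper by noting the extension splits over $K$ — which your ``all constraints are vacuous'' observation reproduces. Two harmless simplifications to your write-up: since $K_1$ is normal in $K$, every $H_\lambda=(P\cap K)\cap g_\lambda K_1 g_\lambda^{-1}$ is literally $P\cap K_1$, so there is no dependence on $\lambda$ to track; and the assumption $\chi|_{T_1}=1$ is automatic (a smooth character with values in $\bFp^{\times}$ is trivial on any pro-$p$ group), so the excluded case never occurs.
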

\begin{proof}
First, if $\delta$ is unramified, then it splits when restricted to $T\cap K$, hence the sequence (\ref{equation-Ind(delta)}) also splits when restricted to $K$. This shows that $\partial_{\delta}=0$ in this case.

Now assume $\delta$ is ramified. It suffices to show that the inclusion $(\Ind_P^G\chi)^{K_1}\hookrightarrow (\Ind_P^G\epsilon_{\delta})^{K_1}$ is an equality.  By definition, $\Ind_{P}^G\epsilon_{\delta}$ identifies with the vector space of smooth functions $f:G\ra \bFp v\oplus\bFp w$ such that $f(pg)=p\cdot f(g)$, where $p\in P$, $g\in G$, and $\{v,w\}$ is the chosen basis of $\epsilon_{\delta}$ as above; the action of $G$ on $\Ind_{P}^G\epsilon_{\delta}$ is given by $(g'\cdot f)(g):=f(gg')$. Let $f$ be such a function which is fixed by $K_1$. We need to show that $f(g)\in \bFp v$ for any $g\in G$. Using the decomposition 
\[G=PK=P\matr0110K_1\coprod \Bigl(\coprod_{\lambda\in\F_q} P \matr10{[\lambda]}1 K_1\Bigr),\]
it suffices to check this for $g=\smatr0110$ and $g=\smatr10{[\lambda]}1$, $\lambda\in\F_q$. If $h=\smatr{1+\varpi a}001\in T_1$ and $g=\smatr10{[\lambda]}1$, we have (as $h\cdot f=f$):
\[ f(g)=(h\cdot f)(g)=f\Bigl(\matr{1+\varpi a}0{[\lambda](1+\varpi a)}1\Bigr) 
=h\cdot \Bigl[f\Bigl(\matr{1}0{[\lambda](1+\varpi a)}1\Bigr)\Bigr]
=h\cdot [f(g)] \]
where the last equality holds because $f$ is fixed by $K_1$. This shows that the vector $f(g)$ is fixed by $T_1$, hence lies in $\bFp v$ because $\delta$ is non-trivial on $T_1$. Similar (and simpler) argument works for $g=\smatr0110$. This finishes the proof.
\end{proof}

Since $K_1$ is normal in $K$, the space $H^1(K_1/Z_1,\pi)$ can be viewed naturally as a $K$-representation. 
\begin{corollary}\label{cor-f-dim}
Let $d:=\dim_{\bFp}\Hom(1+\p_L,\bFp)$. There is a $K$-equivariant embedding $(\pi^{K_1})^{\oplus d}\hookrightarrow H^1(K_1/Z_1,\pi)$.
\end{corollary}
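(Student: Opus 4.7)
The plan is to combine the $d$ distinct extensions $\epsilon_{\delta_i}$ into a single $(d+1)$-dimensional $P$-representation, and then run the argument of Proposition \ref{prop-partial_delta} on this bigger object.

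First I would fix $\delta_1,\ldots,\delta_d\in\Hom(L^\times,\bFp)$ whose restrictions to $1+\p_L$ form an $\bFp$-basis of $\Hom(1+\p_L,\bFp)$; such $\delta_i$ exist by the decomposition $L^\times\cong\varpi^{\Z}\times\mu_{q-1}\times(1+\p_L)$. I would then define a $P$-representation $\eta$ on $\bFp v\oplus\bigoplus_{i=1}^{d}\bFp w_i$ by $b\cdot v=\chi(b)v$ and $b\cdot w_i=\chi(b)w_i+\chi(b)\delta_i(b)v$ for $b\in P$ (with each $\delta_i$ inflated to $P$ as in \S\ref{subsection-PS}). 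This fits into an exact sequence
\[0\ra\chi\ra\eta\ra\chi^{\oplus d}\ra0\]
of $P$-representations whose class in $\Ext^1_{P,\zeta}(\chi^{\oplus d},\chi)=\Hom(L^\times,\bFp)^{\oplus d}$ is $(\delta_1,\ldots,\delta_d)$.

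Applying the exact functor $\Ind_P^G$ and then $H^\bullet(K_1/Z_1,-)$ yields a $K$-equivariant long exact sequence whose connecting homomorphism is a map $\partial':(\pi^{K_1})^{\oplus d}\ra H^1(K_1/Z_1,\pi)$. By naturality of the connecting map under the $i$-th inclusion $\chi\hookrightarrow\chi^{\oplus d}$ (whose pullback of $\eta$ recovers $\epsilon_{\delta_i}$), one obtains $\partial'(v_1,\ldots,v_d)=\sum_{i=1}^{d}\partial_{\delta_i}(v_i)$, so $\partial'$ is exactly the natural candidate for the desired $K$-equivariant embedding.

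It then suffices to show $\partial'$ is injective, equivalently that $(\Ind_P^G\eta)^{K_1}=\pi^{K_1}$. I would repeat the argument in the proof of Proposition \ref{prop-partial_delta}: given $f\in(\Ind_P^G\eta)^{K_1}$ and a coset representative $g\in\{\smatr{0}{1}{1}{0}\}\cup\{\smatr{1}{0}{[\lambda]}{1}\}_{\lambda\in\F_q}$ of $P\backslash G/K_1$, the $K_1$-invariance of $f$ combined with right-conjugation by elements of $T_1$ forces $f(g)\in\eta$ to be $T_1$-fixed. A direct calculation (using $\chi|_{T_1}=1$, automatic since $T_1$ is pro-$p$ and $\chi$ is smooth) shows that the $T_1$-fixed subspace of $\eta$ consists of vectors $\alpha v+\sum_i\beta_i w_i$ satisfying $\sum_i\beta_i\delta_i(b)=0$ for every $b\in T_1$; the linear independence of $\delta_i|_{1+\p_L}$ then forces each $\beta_i=0$, so $f(g)\in\bFp v$ and hence $f\in\pi^{K_1}$. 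The only step of real substance is this $T_1$-fixed-subspace computation inside $\eta$, a direct generalization of the one carried out in the proof of Proposition \ref{prop-partial_delta}.
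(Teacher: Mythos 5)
Your argument is correct, and it assembles the ingredients differently from the paper. The paper keeps the homomorphisms separate: it notes that $\delta\mapsto\partial_{\delta}$ is an $\bFp$-linear map $\Hom(L^{\times},\bFp)\ra \Hom_K(\pi^{K_1},H^1(K_1/Z_1,\pi))$ whose kernel, by Proposition \ref{prop-partial_delta}, is exactly the space of unramified homomorphisms, and then concludes via $\Hom(\cO_L^{\times},\bFp)\cong\Hom(1+\p_L,\bFp)$ and the injectivity statement of Proposition \ref{prop-partial_delta}; the passage from ``$d$ linearly independent injective maps $\partial_{\delta_i}$'' to an embedding of the direct sum $(\pi^{K_1})^{\oplus d}$ is left implicit there. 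You instead extend a basis $\delta_1,\dots,\delta_d$ of $\Hom(1+\p_L,\bFp)$ to $L^{\times}$, bundle them into a single $(d+1)$-dimensional $P$-representation $\eta$ with quotient $\chi^{\oplus d}$, and rerun the $K_1$-invariance computation from the proof of Proposition \ref{prop-partial_delta} once on $\Ind_P^G\eta$: the value $f(g)$ at each coset representative is forced to be $T_1$-fixed, and the linear independence of the $\delta_i|_{1+\p_L}$ (together with $\chi|_{T_1}=1$ and the surjectivity of $T_1\ra 1+\p_L$, $\smatr{a}{b}{0}{d}\mapsto ad^{-1}$) shows the $T_1$-fixed subspace of $\eta$ is $\bFp v$, whence $(\Ind_P^G\eta)^{K_1}=\pi^{K_1}$ and the connecting map $(\pi^{K_1})^{\oplus d}\ra H^1(K_1/Z_1,\pi)$ is injective. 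This is a genuine gain in precision at the one point where the paper is terse: injectivity of each individual $\partial_{\delta_i}$ does not formally imply injectivity of their sum on the direct sum, whereas your single simultaneous computation delivers exactly the joint injectivity that the corollary asserts (indeed, you do not even need the identification $\partial'(v_1,\dots,v_d)=\sum_i\partial_{\delta_i}(v_i)$, since injectivity of $\partial'$ already follows from the equality of $K_1$-invariants). What the paper's route buys is brevity and the reuse of Proposition \ref{prop-partial_delta} as a black box. Two small points you should make explicit: the $\delta_i$ are to be taken smooth (locally constant), as is implicit throughout \S\ref{subsection-PS}, so that $\eta$ and $\Ind_P^G\eta$ are smooth representations on which $Z_1$ acts trivially; and the $K$-equivariance of the connecting map holds because $K_1/Z_1$ is normal in $K/Z_1$, exactly as in the paper.
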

\begin{proof}
The association $\delta\mapsto\partial_{\delta}$ defines an $\bFp$-linear  map 
\[\Hom(F^{\times},\bFp)\ra \Hom_K(\pi^{K_1},H^1(K_1/Z_1,\pi)).\]
By Proposition \ref{prop-partial_delta}, its kernel is the subspace of unramified homomorphisms, so we obtain an exact sequence
\[0\ra \Hom^{\rm un}(L^{\times},\bFp)\ra \Hom(L^{\times},\bFp)\ra \Hom_K(\pi^{K_1},H^1(K_1/Z_1,\pi))\]
(where $\Hom^{\rm un}$ means unramified homomorphisms).
The assertion follows from Proposition \ref{prop-partial_delta} and that $\Hom(\cO_L^{\times},\bFp)\cong \Hom(1+\p_L,\bFp)$.
\end{proof}

Let $\pi$ be a principal series. For later use, we recall some basic results about its $I_1$-invariants. By \cite[Thm. 34]{BL}, $\rsoc_K\pi$ is not necessarily irreducible in general, but it contains a \emph{unique} irreducible sub-representation which is of dimension $\geq 2$, which we denote by $\sigma$. The decomposition $G=PI_1\coprod P\Pi I_1$ (where $\Pi:=\smatr01{\varpi}0$) implies that $\pi^{I_1}$ is always 2-dimensional, spanned over $\bFp$ by $f_1,f_2$ characterized as follows:
\begin{equation}\label{equation-f1f2}f_1(\id)=1,\ f_1(\Pi)=0; \ \ \ f_2(\id)=0,\ f_2(\Pi)=1.\end{equation}
Here $\id$ denotes the identity matrix of $G$. It is easy to see that $\langle K.f_2\rangle=\sigma$ and $\sum_{\lambda\in\F_q}\smatr{\varpi}{[\lambda]}01f_2=\chi(\smatr100{\varpi})f_2$. Extending $\sigma$ to be a $KZ$-representation by letting $\smatr{\varpi}00{\varpi}$ act trivially and setting $\lambda:=\chi(\smatr100{\varpi})$, we get $\pi\cong \cInd_{KZ}^G\sigma/(T-\lambda)\otimes \chi'\circ\det$,  for some  (uniquely determined) character $\chi':L^{\times}\ra \bFp^{\times}$. Here we have used the formula (\ref{equation-Tv}) (see \S\ref{section-appendix} below) for the action of $T$.   

\subsection{Computation of $H^1(K_1/Z_1,\pi)$} 

 Assume in this subsection that $L$ is \emph{unramified} over $\Q_p$ of degree $f$. Although it is not always needed, we assume for convenience   \emph{$f\geq 2$}.

Define the following elements in the completed Iwasawa algebra $\bFp[[N_0]]$:
\[X_i:=\sum_{\mu\in \F_q}\mu^{-p^i}\matr{1}{[\mu]}01\in \bFp[[N_0]].\]
A similar proof as that of \cite[Prop. 2.13]{Sc} shows that $\bFp[[N_0]]\cong \bFp[[X_0,...,X_{f-1}]]$.  Let $\tau=(s_0,...,s_{f-1})\otimes{\det}^b$ be a weight. We can view $\tau$ as an $\bFp[[N_0]]$-module. A direct generalization of \cite[Prop. 2.14]{Sc}, using \cite[Lem. 2]{BL}, shows that  $\tau$ is isomorphic to $\bFp[[X_0,...,X_{f-1}]]/(X_0^{s_0+1},...,X_{f-1}^{s_{f-1}+1})$ as a module over $\bFp[[N_0]]\cong \bFp[[X_0,...,X_{f-1}]]$. Precisely, if $w\in \smatr0110\cdot\tau^{N_0}$ is non-zero (such a vector is unique up to a scalar), then $\tau$ is generated by $w$ as an $\bFp[[N_0]]$-module.
\begin{lemma}\label{lemma-H1(N0)}
 The $\bFp$-vector space $\Ext^1_{N_0}(\tau,\bFp)$ is of dimension $f$.
\end{lemma}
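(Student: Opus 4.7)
The plan is to reduce the computation to one over the completed group algebra $R := \bFp[[N_0]] \cong \bFp[[X_0,\ldots,X_{f-1}]]$ and then exploit the presentation $\tau \cong R/I$, with $I = (X_0^{s_0+1},\ldots,X_{f-1}^{s_{f-1}+1})$, recalled in the paragraph preceding the lemma.

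First I would note that since $N_0$ is pro-$p$ and both $\tau$ and $\bFp$ are finite-dimensional over $\bFp$, every smooth extension is automatically finite, hence discrete, so $\Ext^1_{N_0}(\tau,\bFp)$ coincides with $\Ext^1_R(\tau,\bFp)$ computed in the abelian category of $R$-modules. The short exact sequence $0 \to I \to R \to \tau \to 0$, together with $\Ext^1_R(R,\bFp)=0$, then yields
\[
\Ext^1_R(\tau,\bFp) \;\cong\; \mathrm{coker}\bigl(\Hom_R(R,\bFp)\to\Hom_R(I,\bFp)\bigr),
\]
and because $I \subset \mathfrak{m}$ acts as zero on $\bFp = R/\mathfrak{m}$ the restriction map vanishes, so $\Ext^1_R(\tau,\bFp) \cong \Hom_R(I,\bFp)$.

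It remains to show $\dim_{\bFp}\Hom_R(I,\bFp) = f$. Any $R$-linear map $I \to \bFp$ factors through $I/\mathfrak{m}I$, so I would prove that the images of $X_0^{s_0+1},\ldots,X_{f-1}^{s_{f-1}+1}$ form an $\bFp$-basis of $I/\mathfrak{m}I$. These $f$ elements clearly generate; for linear independence the key point is that they form a regular sequence in the regular local ring $R$ — easily checked by reducing modulo the first $k$ of them, which leaves a power series ring truncated in one extra variable in which the next element is a non-zero-divisor. A regular sequence contained in $\mathfrak{m}$ is always minimal as a generating set of the ideal it generates (its syzygies are Koszul, so any $\bFp$-linear relation among the generators modulo $\mathfrak{m}I$ forces the coefficients to lie in $\mathfrak{m}$, hence to vanish), which gives the required linear independence.

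The only point that really needs care is this last linear-independence argument; the reduction from smooth to module-theoretic $\Ext^1$ for finite modules is routine and should need only a sentence.
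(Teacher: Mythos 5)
Your proof is correct, and at bottom it is the same computation as the paper's: both reduce the smooth $\Ext^1$ to $\Ext^1$ over $R=\bFp[[N_0]]\cong\bFp[[X_0,\ldots,X_{f-1}]]$ for the cyclic module $\tau\cong R/I$, $I=(X_0^{s_0+1},\ldots,X_{f-1}^{s_{f-1}+1})$, and both ultimately rest on the fact that these generators form a regular sequence, i.e.\ on the Koszul complex. The packaging differs: the paper exhibits, for each $j$, the explicit non-split extension $0\ra \bFp e_j\ra R/(X_0^{s_0+1},\ldots,X_j^{s_j+2},\ldots,X_{f-1}^{s_{f-1}+1})\ra \tau\ra 0$ and quotes Koszul theory to say these classes form a basis, whereas you dimension-shift along $0\ra I\ra R\ra \tau\ra 0$ to get $\Ext^1_R(\tau,\bFp)\cong\Hom_R(I,\bFp)\cong(I/\mathfrak{m}I)^{\ast}$ and then show $\dim_{\bFp}I/\mathfrak{m}I=f$ by the minimal-generation property of a regular sequence; your syzygy argument for that last point is fine (alternatively, Krull's height theorem gives it immediately, since $I$ has height $f$). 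One practical remark: what your route does not directly produce is the explicit description of the $f$ basis extensions, and the paper reuses precisely that description in the proof of Proposition \ref{prop-cH-action} to read off the $\cH$-eigencharacters $\psi_{\tau}^s\alpha^{(s_j+1)p^j}$; if you adopt your version you should note that under $\Ext^1_R(\tau,\bFp)\cong(I/\mathfrak{m}I)^{\ast}$ the dual basis element picking out $X_j^{s_j+1}$ corresponds to the paper's $j$-th extension, so the same eigencharacter computation goes through.
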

\begin{proof}
 We have the identification \[ \Ext^1_{N_0}(\tau,\bFp)\cong \Ext^1_{\bFp[[N_0]]}(M,\bFp),\]
where $M$ denotes $\bFp[[X_0,...,X_{f-1}]]/(X_0^{s_0+1},...,X_{f-1}^{s_{f-1}+1})$. The assertion then follows from the theory of Koszul complex (\cite[\S1.6]{BH}). Explicitly, for each $0\leq j\leq f-1$, the extension 
\[0\ra \bFp e_j\ra \bFp[[X_0,...,X_{f-1}]]/(X_0^{r_0+1},...,X_j^{r_j+2},...,X_{f-1}^{r_{f-1}+1})\ra M\ra0,\]
is non split (where $e_j$ is sent to $X_j^{r_j+1}$), and they form a basis of $\Ext^1_{\bFp[[N_0]]}(M,\bFp)$.
\end{proof}

We need take into account of the action of $\cH$, where $$\cH:=\bigl\{\smatr{[\lambda]}00{[\mu]}: \lambda,\mu\in\F_q^{\times}\bigr\}\subset K.$$ Note that the order of $\cH$ is prime to $p$ and $\cH$ normalizes $N_0$. 
\begin{proposition}\label{prop-cH-action}
Let $\tau=(s_0,...,s_{f-1})\otimes{\det}^b$ be as above. Let $\psi$ be a character of $\cH$ such that $\Ext^1_{\cH N_0}(\tau,\psi)\neq 0$. Then there exists $0\leq j\leq f-1$ such that $\psi=\psi_{\tau}^s\alpha^{(s_j+1)p^j}$, where $\psi_{\tau}$ is the character corresponding to the action of $\cH$ on $\tau^{N_0}$. Moreover, $\Ext^1_{\cH N_0}(\tau,\psi)$ is of dimension 1.
\end{proposition}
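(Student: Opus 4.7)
The plan is to combine the description of $\Ext^1_{N_0}(\tau,\bFp)$ from Lemma \ref{lemma-H1(N0)} with a prime-to-$p$ order argument for $\cH$. First, since $|\cH|=(q-1)^2$ is invertible in $\bFp$, the Lyndon--Hochschild--Serre spectral sequence associated with the normal inclusion $N_0 \triangleleft \cH N_0$ collapses and gives a canonical isomorphism
\[
\Ext^1_{\cH N_0}(\tau,\psi) \cong \Ext^1_{N_0}(\tau,\psi)^{\cH}.
\]
Since $\psi$ restricts trivially to $N_0$, the underlying $\bFp$-vector space is $\Ext^1_{N_0}(\tau,\bFp)$, on which $\cH$ acts by the natural conjugation action twisted on the target by the character $\psi$. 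By Lemma \ref{lemma-H1(N0)} this space is $f$-dimensional with the Koszul-type basis $\{\epsilon_j\}_{j=0}^{f-1}$.

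The main calculation is to identify each $\epsilon_j$ as an $\cH$-eigenvector and to read off its weight. The key input is the conjugation formula inside $\cH \ltimes N_0$: a short computation with Teichm\"uller lifts shows that for $h=\smatr{[\lambda]}00{[\mu]}$,
\[
h X_i h^{-1} = \alpha(h)^{p^i} X_i, \qquad \alpha(h):=\lambda\mu^{-1}.
\]
The generator $1 \in M$ realizes the lowest weight vector of $\tau$ (because $M$ is generated over $\bFp[[N_0]]$ by $\smatr0110 \cdot \tau^{N_0}$, which has weight $\psi_\tau^s$). Propagating the $\cH$-action multiplicatively through the conjugation formula, the element $X_j^{s_j+1}$ spanning the sub-line of the $j$-th extension acquires the $\cH$-weight $\psi_\tau^s\,\alpha^{(s_j+1)p^j}$. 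Hence $\epsilon_j$ canonically upgrades to a nonzero class in $\Ext^1_{\cH N_0}\bigl(\tau,\;\psi_\tau^s\alpha^{(s_j+1)p^j}\bigr)$.

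Putting the two steps together, any nonzero element of $\Ext^1_{\cH N_0}(\tau,\psi)$ is a linear combination of those $\epsilon_j$ with $\psi=\psi_\tau^s\alpha^{(s_j+1)p^j}$, which gives the existence of $j$ as asserted. The dimension-one claim then reduces to the pairwise distinctness of the characters $\psi_\tau^s\alpha^{(s_j+1)p^j}$, which follows from the fact that the exponents $(s_j+1)p^j$ have pairwise distinct base-$p$ digit expansions in the allowed range for the components of $\tau$.

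The main technical obstacle is the $\cH$-weight bookkeeping on the perturbed Koszul extensions: one must fix compatible $\cH$-structures on the generators of the middle term $E_j$ and propagate them through the modified defining relations while carefully tracking the factors of $\alpha$ that accumulate on the sub-line. The remaining ingredients---Lyndon--Hochschild--Serre for prime-to-$p$ quotients and the distinctness of $p$-adic digit patterns---are elementary.
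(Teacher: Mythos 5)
Your argument is essentially the paper's own proof, just written out in more detail: the paper also reduces to the Koszul basis of $\Ext^1_{N_0}(\tau,\bFp)$ from Lemma \ref{lemma-H1(N0)} and uses exactly the weight-shift observation (your conjugation formula $hX_ih^{-1}=\alpha(h)^{p^i}X_i$, phrased there as ``if $w$ has $\cH$-character $\chi$ then $X_jw$ has character $\chi\alpha^{p^j}$''), the passage to $\cH$-eigenspaces via the prime-to-$p$ order of $\cH$ being left implicit. The one place where you go beyond the paper is the last step, and there your justification overclaims: the exponents $(s_j+1)p^j$ need \emph{not} give pairwise distinct characters for an arbitrary weight. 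Indeed, if $s_i=p-1$ then $(s_i+1)p^i=p^{i+1}$, so if moreover $s_{i+1}=0$ (indices taken cyclically, using $p^f\equiv 1 \pmod{q-1}$ when $i=f-1$) one gets $(s_i+1)p^i\equiv (s_{i+1}+1)p^{i+1}\pmod{q-1}$, and then two of the Koszul classes carry the same $\cH$-character, so the corresponding $\Ext^1_{\cH N_0}(\tau,\psi)$ would be $2$-dimensional. Distinctness (hence the multiplicity-one statement) does hold as soon as all $s_i\leq p-2$, since then the exponents are distinct integers in $\{1,\dots,q-2\}$; this covers the weights to which the proposition is actually applied (via hypothesis (\textbf{H}) and the genericity of $\brho$), and the paper's own proof is equally silent on this point, but as written your ``distinct base-$p$ digit expansions'' claim is not correct for every weight and should be stated with the restriction $s_i\leq p-2$ or with the degenerate pattern $(p-1,0)$ excluded.
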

\begin{proof}
It is easy to see that if $V$ is an $\bFp[[\cH N_0]]$-module, and $w\in V$ is an $\cH$-eigenvector of character $\chi$, then $X_jw$ is also an $\cH$-eigenvector, but of character $\chi\alpha^{p^j}$.  So by the proof of Lemma \ref{lemma-H1(N0)}, the characters $\psi$ of $\cH$ such that $\Ext^1_{\cH N_0}(\tau,\psi)\neq0$ are the characters $\{\psi_{\tau}^s\alpha^{(s_j+1)p^j}, 0\leq j\leq f-1\}$. \end{proof}

From now on, we assume $\pi=\Ind_P^G\chi$ is a principal series satisfying: \medskip

(\textbf{H})  the $K$-socle of $\pi$  is irreducible, and if $(r_0,...,r_{f-1})\otimes{\det}^a$ is the socle, then $0\leq r_i\leq p-2$. \medskip

\begin{remark}
The first condition of ({\bf H}) amounts to demand that if we write $\chi=\eta_1\otimes\eta_2$ for $\eta_i:L^{\times}\ra \bFp^{\times}$, then $\eta_1\eta_2^{-1}$ is a ramified character.
\end{remark}
\medskip

  Define a set of weights, depending on (the $K$-socle of) $\pi$, as follows: for $0\leq j\leq f-1$, let $\sigma_j(\pi):=(s_0,...,s_{f-1})\otimes{\det}^b$ where $s_i=r_i$ for $i\notin\{j-1,j\}$ and $s_{j-1}=p-2-r_{j-1}$, $s_j=r_j+1$, $b\equiv a+p^{j-1}(r_{j-1}+1)-p^j\ (\mathrm{mod}\ q-1)$. They are well defined under the condition ({\bf H}) and the assumption $f\geq 2$.


\begin{proposition}\label{Prop-K-extension}
Let $\pi=\Ind_{P}^G\chi$ be a principal series satisfying ({\bf H}) and denote $\sigma=\rsoc_K\pi=(r_0,...,r_{f-1})\otimes{\det}^a$. Let $\tau$ be a weight. Then $\Ext^1_{K/Z_1}(\tau,\pi|_K)\neq0$ if and only if one of the following holds  
\begin{enumerate}
\item[(i)] $\tau\cong \sigma_j(\pi)$ for some $0\leq j\leq f-1$; in this case   $\dim \Ext^1_{K}(\sigma_j(\pi),\pi|_K)=1$.

\item[(ii)] $\tau\cong \sigma$; in this case $\dim \Ext^1_{K/Z_1}(\sigma,\pi|_K)=f$.
\end{enumerate} 
\end{proposition}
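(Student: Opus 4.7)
The plan is to (1) reduce the computation from $K$ to the Borel subgroup $B:=P\cap K=T_0 N_0$ via Iwasawa and Shapiro, and then (2) unfold the extension $1\to N_0\to B/Z_1\to T_0/Z_1\to 1$ via the Hochschild--Serre spectral sequence, interpreting the resulting terms through Proposition~\ref{prop-cH-action}.

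By the Iwasawa decomposition $G=PK$, Mackey's formula gives $\pi|_K\cong \Ind_B^K(\chi|_B)$, and since $B$ is open of finite index in $K$ this induction is exact, so Shapiro's lemma yields $\Ext^i_{K/Z_1}(\tau,\pi|_K)\cong \Ext^i_{B/Z_1}(\tau|_B,\chi)$ for all $i\geq 0$ (one checks that $Z_1$ acts trivially on both sides: on weights since they are inflated from $\GL_2(\F_q)$, and on $\chi$ since $\chi|_{Z_1}$ is a continuous character from a pro-$p$ group to $\bFp^{\times}$). The Hochschild--Serre spectral sequence then produces
\[0\to H^1(T_0/Z_1,\Hom_{N_0}(\tau,\chi))\to \Ext^1_{B/Z_1}(\tau,\chi)\to \Ext^1_{N_0}(\tau,\chi)^{T_0/Z_1}\to H^2(T_0/Z_1,\Hom_{N_0}(\tau,\chi)).\]
Writing $\psi_\tau^s$ for the $T_0$-character on the one-dimensional coinvariants $\tau_{N_0}$, one has $\Hom_{N_0}(\tau,\chi)\cong \bFp(\chi\psi_\tau^{-s})$ as a $T_0$-module. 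Its $T_0$-invariants compute $\Hom_K(\tau,\pi|_K)$ by Frobenius reciprocity, which under (\textbf{H}) is non-zero iff $\tau\cong\sigma$.

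For $\tau\cong\sigma$, the character $\chi\psi_\sigma^{-s}$ is trivial, so $H^1(T_0/Z_1,\bFp)\cong \Hom_{\mathrm{cts}}(T_1/Z_1,\bFp)\cong \bFp^f$ (using $T_1/Z_1\cong 1+p\cO_L\cong \Z_p^f$ and that $|\cH|$ is prime to $p$). Proposition~\ref{prop-cH-action} forces $\Ext^1_{N_0}(\sigma,\chi)^{\cH}=0$: under (\textbf{H}) and $f\ge 2$ one has $0<(r_k+1)p^k<q-1$ for every $k$, so $\alpha^{(r_k+1)p^k}\neq 1$. Since $T_1/Z_1$ acts trivially on $\Ext^1_{N_0}(\tau,\chi)$ (its conjugation action on $N_0$ takes values in $1+p\cO_L$, trivial mod $p$), this also gives $\Ext^1_{N_0}(\sigma,\chi)^{T_0/Z_1}=0$. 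Hence $\Ext^1_{B/Z_1}(\sigma,\chi)\cong H^1(T_0/Z_1,\bFp)\cong \bFp^f$, proving (ii).

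For $\tau\not\cong\sigma$, the character $\chi\psi_\tau^{-s}$ is non-trivial and, being valued in $\bFp^{\times}$, is automatically trivial on the pro-$p$ part $T_1/Z_1$; hence it is non-trivial on $\cH$. Since $|\cH|$ is prime to $p$, this makes $H^i(T_0/Z_1,\Hom_{N_0}(\tau,\chi))=0$ for all $i$, collapsing the Hochschild--Serre sequence to $\Ext^1_{B/Z_1}(\tau,\chi)\cong \Ext^1_{N_0}(\tau,\chi)^{\cH}$. By Proposition~\ref{prop-cH-action} this is one-dimensional precisely when $\chi|_{\cH}=\psi_\tau^s\alpha^{(s_k+1)p^k}$ for some $0\le k\le f-1$. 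Unwinding this identity on the torus yields the congruences
\[\textstyle\sum_i(s_i-r_i)p^i\equiv 2(s_k+1)p^k,\qquad b\equiv a-(s_k+1)p^k\pmod{q-1}.\]
Substituting the explicit definition of $\sigma_j(\pi)$ with $k=j-1$ verifies the existence claim in (i). The main obstacle is the uniqueness: showing that $\tau\cong\sigma_j(\pi)$ and $k=j-1$ are the \emph{only} solutions. This is a digit-by-digit $p$-adic analysis of the first congruence ($2(s_k+1)p^k$ has $p$-adic support essentially in positions $\{k,k+1\}$), using crucially the bound $0\le r_i\le p-2$ from (\textbf{H}) to pin down each $s_i$ and exclude wrap-around ambiguities modulo $q-1$.
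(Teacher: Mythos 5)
Your reduction via Iwasawa and Shapiro is the same as the paper's, but from that point on you proceed differently, and your route is sound. The paper analyses a non-split extension $0\to\psi\to V\to\tau\to0$ of $P\cap K$-representations by hand, splitting into the case where $V|_{\cH N_0}$ is non-split (case (i), with existence supplied by \cite[Cor.~5.6]{BP} together with an argument that $\Hom_K(\sigma_{j+1}(\pi),\pi/\sigma)=0$) and the case where it splits (where a ``radical'' claim shows the extension is pulled back from $\tau/\mathrm{rad}(\tau)$, giving case (ii), with the lower bound $f$ imported from Corollary \ref{cor-f-dim}, i.e.\ from the global extensions $\Ind_P^G\epsilon_\delta$ restricted to $K$). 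Your Hochschild--Serre five-term sequence for $N_0$ normal in $(P\cap K)/Z_1$ packages both cases at once: when the $T_0$-character $\chi\psi_\tau^{-s}$ on $\Hom_{N_0}(\tau,\chi)$ is trivial you obtain $\dim\Ext^1=f$ purely inside $K$ (no appeal to Corollary \ref{cor-f-dim}), and when it is non-trivial (hence non-trivial on $\cH$, so all $H^i(T_0/Z_1,-)$ vanish) you obtain $\Ext^1\cong\Ext^1_{N_0}(\tau,\chi)^{T_0/Z_1}$, which by Proposition \ref{prop-cH-action} gives simultaneously the upper bound $1$ and, for $\tau\cong\sigma_j(\pi)$, the existence statement, without invoking \cite[Cor.~5.6]{BP} or \cite[Thm.~2.4]{BP}. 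This is a genuine simplification of both dimension counts.

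Two caveats. First, your parenthetical justification that $T_1/Z_1$ acts trivially on $\Ext^1_{N_0}(\tau,\chi)$ (``conjugation lands in $1+p\cO_L$, trivial mod $p$'') is not by itself a proof; what you actually need, and what is easy, is that $T_1/Z_1$ is pro-$p$, commutes with $\cH$, hence preserves the one-dimensional $\cH$-eigenspace singled out by Proposition \ref{prop-cH-action} and acts on it through a smooth character valued in $\bFp^{\times}$, which is necessarily trivial; this suffices both for the vanishing of the $T_0$-invariants when $\tau\cong\sigma$ and for the invariants being exactly one-dimensional when the character condition holds. Second, you defer the identification of the weights satisfying $\chi|_{\cH}=\psi_\tau^{s}\alpha^{(s_k+1)p^k}$, calling it the main obstacle; this is precisely the step the paper disposes of as ``a simple calculation'', and it does go through: the two torus equations reduce to $\sum_i s_ip^i\equiv\sum_i r_ip^i+2(s_k+1)p^k \pmod{q-1}$ together with $b\equiv a-(s_k+1)p^k$, and comparing the digit at position $k$ leaves only the single-carry solution $s_k=p-2-r_k$, $s_{k+1}=r_{k+1}+1$, $s_i=r_i$ otherwise (indices mod $f$, the no-carry and double-carry options being impossible since $0\le r_k\le p-2$, and the wrap-around ambiguity $\sum_i s_ip^i\equiv 0\pmod{q-1}$ being excluded by ({\bf H}) and $f\ge 2$), i.e.\ $\tau\cong\sigma_{k+1}(\pi)$ with the correct determinant twist. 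So the deferred step is a finite verification rather than a missing idea, but as written your case (i) is incomplete until it is carried out.
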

\begin{proof}
Iwasawa decomposition implies that $(\Ind_P^G\chi)|_{K}\cong\Ind_{P\cap K}^K(\chi|_{T\cap K})$. To simplify the notation, we  write $\psi=\chi|_{P\cap K}$ for its restriction to $P\cap K$.
By Shapiro's lemma, we have an isomorphism $\Ext^1_K(\tau,\pi|_K)\cong \Ext^1_{P\cap K}(\tau,\psi)$. Consider a non-split extension of ${P\cap K}$-representations
\begin{equation}\label{equation-extension-V}0\ra \psi\ra V\ra \tau\ra 0.\end{equation}

First assume  $V$ remains non-split when restricted to $\cH N_0$,  so that $\Ext^1_{\cH N_0}(\tau,\psi)\neq 0$. If we write $\tau=(s_0,...,s_{f-1})\otimes{\det}^b$, then Proposition \ref{prop-cH-action} implies that $\psi=\psi_{\tau}^s\alpha^{(s_j+1)p^j}$ for some $0\leq j\leq f-1$. Using the relation  $\psi^s=(r_0,...,r_{f-1})\otimes{\det}^a$, a simple calculation shows that $\psi_{\tau}=\psi_{\sigma_{j+1}(\pi)}$, hence $\tau\cong \sigma_{j+1}(\pi)$. That is we are in case (i) of the theorem. Again by Proposition \ref{prop-cH-action}, $\Ext^1_{P\cap K}(\tau,\psi)$ has dimension $\leq 1$, so the same holds for $\Ext^1_{K}(\tau,\pi|_K)$. To conclude in this case, it suffices to construct a non-zero element in $\Ext^1_{K}(\sigma_{j+1}(\pi),\pi|_K)$. In fact, \cite[Cor.5.6]{BP}   says that $\Ext^1_K(\sigma_{j+1}(\pi),\sigma)$ is non-zero and has dimension 1. In view of the exact sequence
\[\Hom_K(\sigma_{j+1}(\pi),\pi/\sigma)\ra \Ext^1_K(\sigma_{j+1}(\pi),\sigma)\ra \Ext^1_K(\sigma_{j+1}(\pi),\pi|_{K})\] 
we are reduced to show $\Hom_K(\sigma_{j+1}(\pi),\pi/\sigma)=0$. If the later space were non-zero,  we would get an inclusion $\Sigma\hookrightarrow \pi$, where  $\Sigma$ denotes the unique non-split extension of $\sigma_{j+1}(\pi)$ by $\sigma$. But $K_1$ acts trivially on $\Sigma$ (see \cite[Cor. 5.6]{BP}), we would get an inclusion $\Sigma\hookrightarrow \pi^{K_1}\cong \Ind_{P(\F_q)}^{\GL_2(\F_q)}\psi$, which contradicts \cite[Thm. 2.4]{BP}. \medskip

Now assume  $V$ is split when restricted to $\cH N_0$ and choose a $\cH N_0$-splitting $s:\tau\hookrightarrow V$. This implies that  $V$ is fixed by $\smatr{1}{\p_L}01$ since both $\psi$ and $\tau$ are. If $n\in N_0$ and $h\in T_1$, a simple calculation shows that $hn=n'nh$, for some $n'\in \smatr 1{\p_L}01$, therefore the actions on $V$ of $T_1$ and $N_0$ commute.
 \medskip

\textbf{Claim}: \textit{if $x\in \tau$ lies in the radical of $\tau$ (as $N_0$-representation), then $s(x)\in V$ is fixed by $T_1$. }\medskip

\textit{Proof of Claim.} Let $w\in \smatr0110\tau^{N_0}$ be a non-zero vector. We have seen that $w$ generates $\tau$ as an $N_0$-representation.  The condition that $x$ lies in the radical of $\tau$ is equivalent to that there exists a finite set of elements $n_i\in N_0$ such that $x=\sum_i(n_i-1)w$. Let $x$ be such an element and assume   there exists $h\in T_1$ with $(h-1)s(x)\neq 0$. The remark above implies that
\begin{equation}\label{equation-hn=nh} (h-1)s(x)=\sum_i(h-1)(n_i-1)s(w)=\sum_i(n_i-1)(h-1)s(w).\end{equation}
In particular, $(h-1)s(w)$ is non-zero. But, this vector  lies in the underlying space of $\psi$ on which $N_0$ acts trivially, so the equality (\ref{equation-hn=nh}) forces that $(h-1)s(x)=0$ as $n_i\in N_0$, contradiction. The claim follows.\medskip

 By the claim, the extension (\ref{equation-extension-V}) is the pullback of a (non-split) exact sequence  
 \[0\ra \psi\ra W\ra \tau/\mathrm{rad}(\tau)\ra0\]
on which $N_0$ acts trivially but $T_1$ acts non trivially.  This forces that $\psi=\psi_{\tau}^s$, so that $\tau\cong\sigma$ (since $\psi^s=\chi_{\sigma}$) and we are in case (ii) of the theorem. Moreover, because $T_1/Z_1\cong 1+\p_L\cong \cO_L\cong \Z_p^f$ (since $L$ is unramified) , the space $\Ext^1_{T_1/Z_1}(\psi,\psi)$ has dimension $f$.

To conclude in this case, we need show $\dim \Ext^1_{K/Z_1}(\sigma,\pi|_K)$ has dimension $\geq f$, but  it follows from Corollary \ref{cor-f-dim}.
\end{proof} 
\begin{remark}
When $L=\Q_p$, the dimension part of Proposition \ref{Prop-K-extension}(ii) is not always true, cf. \cite[Thm. 7.16(iii)]{BP}. 
\end{remark}

The above proof has the following consequence.
 \begin{corollary}\label{cor-Sigma}
Let $\pi$ be as in Proposition \ref{Prop-K-extension} and $\tau=\sigma_j(\pi)$ for some $0\leq j\leq f-1$. Let $0\ra \pi\ra E\ra \tau\ra0$ be the unique non-split $K$-extension. Then the induced sequence $0\ra \pi^{I_1}\ra E^{I_1}\ra \tau^{I_1}\ra0$ is exact.
 \end{corollary}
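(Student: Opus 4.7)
The plan is as follows. The exactness of $0 \to \pi^{I_1} \to E^{I_1} \to \tau^{I_1} \to 0$ at the first two places follows from left-exactness of $I_1$-invariants, so the content is the surjectivity of $E^{I_1} \twoheadrightarrow \tau^{I_1}$; equivalently, the vanishing of the connecting map $\partial: \tau^{I_1} \to H^1(I_1, \pi)$ in the long exact sequence of $I_1$-cohomology for $0 \to \pi \to E \to \tau \to 0$.

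First, by the argument in the proof of Proposition \ref{Prop-K-extension}(i), the natural map $\Ext^1_K(\tau, \sigma) \to \Ext^1_K(\tau, \pi|_K)$ is an isomorphism of one-dimensional spaces (since $\Hom_K(\tau, \pi/\sigma) = 0$ and both $\Ext$ groups are one-dimensional). Hence $E$ is canonically the pushout $\pi \oplus_\sigma \Sigma$, where $\Sigma$ is the unique non-split $K$-extension of $\tau$ by $\sigma$; by \cite[Cor.~5.6]{BP}, $K_1$ acts trivially on $\Sigma$. Fix a non-zero $w \in \tau^{I_1}$ and a lift $\tilde w \in \Sigma$ that is a $T(\F_q)$-eigenvector of character $\psi_\tau$ (possible since $|T(\F_q)|$ is coprime to $p$). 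Because $K_1$ kills $\Sigma$, the cocycle $c(g) := g\tilde w - \tilde w \in \sigma$ factors through $I_1/K_1 \cong U(\F_q)$, and $\partial(w)$ equals the image of $[c] \in H^1(I_1, \sigma)$ in $H^1(I_1, \pi)$ under the map induced by $\sigma \hookrightarrow \pi$.

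Second, by the long exact sequence for $0 \to \sigma \to \pi \to \pi/\sigma \to 0$, the kernel of $H^1(I_1, \sigma) \to H^1(I_1, \pi)$ coincides with the image of the connecting map $(\pi/\sigma)^{I_1} \to H^1(I_1, \sigma)$. Thus $\partial(w)=0$ iff there exists $\bar v \in (\pi/\sigma)^{I_1}$ whose connecting-map image equals $[c]$. Such a $\bar v$ is produced by exploiting the $p$-adic layers of $\pi$ beyond $\pi^{K_1} \cong \Ind_{P(\F_q)}^{\GL_2(\F_q)} \psi$: although $\tau$ does not embed into $\pi/\sigma$ (as established in the proof of Proposition \ref{Prop-K-extension}(i)), the quotient $\pi/\sigma$ carries additional $I_1$-invariants in the $T(\F_q)$-isotypic component for $\psi_\tau$, coming from the next step of the $K$-congruence filtration on $\pi$. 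Using Proposition \ref{prop-cH-action} together with the explicit description of $\pi^{K_1}$ from \cite[Thm.~2.4]{BP}, one identifies the required $\bar v$ and verifies that its connecting-map image matches $[c]$ up to a non-zero scalar, after rescaling $\tilde w$.

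The main obstacle is this last verification: exhibiting $\bar v$ in the correct $T(\F_q)$-isotypic part of $(\pi/\sigma)^{I_1}$ and matching its cocycle class with $[c]$. This matching relies essentially on hypothesis (\textbf{H}) (which ensures $\psi_\tau$ is distinct from the characters $\chi, \chi^s$ appearing in $\pi^{I_1}$, so that the $\psi_\tau$-isotypic contribution to $E^{I_1}$ comes entirely from the $\tau$-side) together with the assumption $f \geq 2$.
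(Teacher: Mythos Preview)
Your reduction is correct up to and including the first paragraph: $E$ is the pushout $\pi\oplus_\sigma\Sigma$, the connecting class $\partial(w)$ is the image in $H^1(I_1,\pi)$ of the class $[c]\in H^1(I_1,\sigma)$ attached to the extension $\Sigma$, and since $K_1$ kills $\Sigma$ the cocycle $c$ factors through $I_1/K_1$. But from this point on you take a wrong turn.

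The paper's proof is one line: the class $[c]$ is already zero in $H^1(I_1,\sigma)$. Indeed, \cite[Prop.~4.13]{BP} asserts precisely that $\Sigma^{I_1}\to\tau^{I_1}$ is surjective (equivalently $\dim\Sigma^{I_1}=2$), which is the statement that the extension $0\to\sigma\to\Sigma\to\tau\to0$ splits on $I_1$-invariants. Since $E$ is the pushout, the composite $\Sigma\hookrightarrow E$ carries any $I_1$-fixed lift in $\Sigma$ to an $I_1$-fixed lift in $E$, and surjectivity of $E^{I_1}\to\tau^{I_1}$ follows immediately. No passage through $(\pi/\sigma)^{I_1}$ is needed.

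Your second and third paragraphs are therefore built on a false premise: you implicitly assume $[c]\neq 0$ in $H^1(I_1,\sigma)$ and try to kill it via the connecting map $(\pi/\sigma)^{I_1}\to H^1(I_1,\sigma)$ by locating a nonzero $\bar v$ in the $\psi_\tau$-isotypic piece ``coming from the next step of the $K$-congruence filtration''. This is both unnecessary and unsubstantiated. You do not actually exhibit such a $\bar v$, and the claim that $(\pi/\sigma)^{I_1}$ has a nonzero $\psi_\tau$-component is not justified by anything you cite; the ``main obstacle'' you identify is in fact a phantom. In short, you have the right setup but miss the one input (\cite[Prop.~4.13]{BP}) that finishes the argument, and the alternative route you sketch is left incomplete.
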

\begin{proof}
With notations in the proof of Proposition \ref{Prop-K-extension}, the extension $E$ comes from the extension $\Sigma$ of $\tau$ by $\sigma$. The result follows from the corresponding statement for $\Sigma$, see \cite[Prop. 4.13]{BP}.
\end{proof}

 The next lemma will be used in the proof of Proposition \ref{prop-extension-typeA}.

\begin{lemma}\label{lemma-converse}
Let $\pi$ be a principal series satisfying (\textbf{H}) and $\sigma$ be its $K$-socle. Assume that $V$ is a smooth representation of $G$ such that $\pi\hookrightarrow V$. Assume that $\Hom_K(\sigma,V|_K)$ is 1-dimensional and $\Hom_K(\sigma,V/\pi)\neq0$. Then  $V$ contains a sub-$G$-representation $V'$ which is a non-split extension of $\pi$ by $\pi$.
\end{lemma}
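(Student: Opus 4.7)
The plan is to exhibit $V'$ as the preimage in $V$ of a suitable copy of $\pi$ inside $V/\pi$, and to construct the latter by $K$-equivariantly identifying a natural $K$-subrepresentation of $V$ with the principal-series extension $\mathcal{E}_\delta := \Ind_P^G\epsilon_\delta$.

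Concretely, I first set $\widetilde{W} \subset V$ to be the preimage of the given $K$-embedded $\sigma \subset V/\pi$ under $V \twoheadrightarrow V/\pi$; it fits into a short exact sequence of $K$-representations $0 \to \pi \to \widetilde{W} \to \sigma \to 0$. This sequence must be non-split: any $K$-section would compose with $\widetilde{W} \hookrightarrow V$ to produce a second $K$-embedding of $\sigma$ into $V$ whose image meets $\pi$ trivially, contradicting the hypothesis $\dim\Hom_K(\sigma,V|_K) = 1$. By Proposition \ref{Prop-K-extension}(ii), $\dim\Ext^1_{K/Z_1}(\sigma,\pi|_K) = f$, and Corollary \ref{cor-f-dim} produces an injection $\Hom(1+\p_L,\bFp) \hookrightarrow \Ext^1_{K/Z_1}(\sigma,\pi|_K)$ whose source is also $f$-dimensional; therefore this map is an isomorphism. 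Hence the class $[\widetilde{W}]$ is the pullback, along the socle inclusion $\sigma \hookrightarrow \pi$, of a class $[\mathcal{E}_\delta] \in \Ext^1_{G,\zeta}(\pi,\pi)$ for some $\delta \in \Hom(L^\times,\bFp)$ with nontrivial restriction to $\cO_L^\times$.

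It then suffices to extend the $K$-embedding $\sigma \hookrightarrow V/\pi$ to a $G$-embedding $\pi \hookrightarrow V/\pi$, since the preimage $V'$ of this copy of $\pi$ will then be a $G$-extension $0 \to \pi \to V' \to \pi \to 0$ inside $V$, and non-splitness of this extension is forced by the same 1-dimensionality argument. Using the presentation $\pi \cong \cInd_{KZ}^G\sigma/(T-\chi(\smatr{1}{0}{0}{\varpi})) \otimes (\chi'\circ\det)$ from \S\ref{subsection-PS} and Frobenius reciprocity, existence of this $G$-embedding reduces to the Hecke identity $(T-\chi(\smatr{1}{0}{0}{\varpi}))\bar{w} = 0$ in $V/\pi$, where $\bar{w}$ is the socle generator. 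My plan to establish this is: pick an $\cH N_0$-fixed lift $w \in \widetilde{W}$ of $\bar{w}$, which exists because by the proof of Proposition \ref{Prop-K-extension}(ii) the $K$-extension $\widetilde{W}$ splits on $\cH N_0$; match $w$ via the $K$-identification with the explicit function $f_w \in \mathcal{E}_\delta$ (supported on $P\Pi I_1$ and lifting $f_2$) described after Proposition \ref{prop-partial_delta}; and transfer the tautological identity $(T-\chi(\smatr{1}{0}{0}{\varpi}))f_w \in \pi$, which holds in $\mathcal{E}_\delta$ because $f_w$ projects to $f_2$ in $\mathcal{E}_\delta/\pi = \pi$ and $Tf_2 = \chi(\smatr{1}{0}{0}{\varpi})f_2$ in $\pi$ by the discussion in \S\ref{subsection-PS}.

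The main obstacle is precisely this last transfer: the $K$-isomorphism $\widetilde{W} \simeq (\text{pullback of }\mathcal{E}_\delta)$ is \emph{not} $G$-equivariant, so one cannot formally push the Hecke identity from $\mathcal{E}_\delta$ to $V$. Using $\smatr{\varpi}{[\mu]}{0}{1} = \smatr{1}{[\mu]}{0}{1}\smatr{\varpi}{0}{0}{1}$ to split each term, the problem reduces to comparing the action of the single element $\smatr{\varpi}{0}{0}{1}$ on $w \in V$ and on $f_w \in \mathcal{E}_\delta$ modulo the sub $\pi$ on either side. I expect this comparison to go through because the identification of $\widetilde{W}$ as the pullback of $\Ind_P^G\epsilon_\delta$ carries more structure than bare $K$-equivariance, namely a compatibility with $P \cap K$ coming from the description of $f_w$, and the discrepancy on the non-$K$ part can be argued to land in $\pi$ using the same $\partial_\delta$-type analysis employed in Proposition \ref{prop-partial_delta} and Corollary \ref{cor-Sigma}.
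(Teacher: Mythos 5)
Your setup (taking the preimage $\widetilde W$ of $\sigma\subset V/\pi$, noting it is a non-split $K$-extension of $\sigma$ by $\pi|_K$, and identifying its class as the pullback of some $[\Ind_P^G\epsilon_\delta]$ with $\delta$ ramified) coincides with the paper's first step. The genuine gap is exactly the step you flag and then wave at: transferring the Hecke identity $(S-\lambda)f_w\in\pi$ from $\Ind_P^G\epsilon_\delta$ to $V$. The identification of $\widetilde W$ with the pullback of $\Ind_P^G\epsilon_\delta$ is only $K$-equivariant (indeed only an equality of classes in $\Ext^1_{K}(\sigma,\pi|_K)$), while $S=\sum_\lambda\smatr{\varpi}{[\lambda]}{0}{1}$ involves elements outside $K$; so the value of $S w$ in $V$ is simply not determined by this identification, and no ``compatibility with $P\cap K$'' or ``$\partial_\delta$-type analysis'' can force the discrepancy $(S-\lambda)w$ to lie in $\pi$: a priori it is just some new vector of $V$. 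In fact the paper never proves your target statement $(T-\lambda)\bar w=0$ in $V/\pi$ for the original socle vector $\bar w$; it proves a weaker statement after applying a high power of $S$, and that suffices.

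Concretely, what the paper transfers from $\Ind_P^G\epsilon_\delta$ is only $K$-level information about the lift $f_w$: it is $N_0$-fixed, $\cH$-eigen, and satisfies $hf_w=f_w+\delta(\Pi h\Pi^{-1})f_v$ for $h\in T_1$. Then it works intrinsically in $V$: using $Sf_v=\lambda f_v$ and the commutation $hS=Sh$ on $N_0$-fixed vectors (Lemma \ref{lemma-S2}(iii)), the vector $f_w':=(S-\lambda)f_w$ is $T_1$- and $N_0$-fixed; by Lemma \ref{lemma-S2}(iv) some $S^nf_w'$ is $I_1$-fixed; by the proof of \cite[Lem.~4.1]{Pa07} the vector $S^{n+1}f_w'$ generates a copy of $\sigma$ if non-zero, and it is here that the multiplicity-one hypothesis $\dim\Hom_K(\sigma,V|_K)=1$ is used a second time to force $S^{n+1}f_w'\in\bFp f_v\subset\pi$. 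This yields $S^{n+2}f_w=\lambda S^{n+1}f_w$ in $V/\pi$, i.e.\ the eigen-relation only for $S^{n+1}\bar f_w$, which nonetheless generates a principal series isomorphic to $\pi$ in $V/\pi$; its preimage is the desired $V'$, non-split again by multiplicity one. Without this iteration-plus-multiplicity-one argument (or a substitute for it), your proposal does not close: the reduction to comparing the action of $\smatr{\varpi}{0}{0}{1}$ on $w$ and on $f_w$ is precisely the unproved point, since the two live in representations that are only matched along $K$.
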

\begin{proof}
The assumption $\dim\Hom_K(\sigma,V)=1$ means that   $\sigma$ appears in $\rsoc_KV$ with multiplicity one and is contained in $\pi$. Therefore $V$ contains a sub-$K$-representation $E$ which fits into a non-split extension
\[0\ra \pi|_K\ra E\ra \sigma\ra0.\]

We first describe the extension $E$ more explicitly. Proposition \ref{prop-partial_delta} implies a surjective morphism $\Ext^1_{G,\zeta}(\pi,\pi)\twoheadrightarrow \Ext^1_{K,\zeta}(\sigma,\pi|_{K})$, given by pullback via $\sigma\hookrightarrow \pi|_K$. Choose an extension   
\[0\ra \pi\ra \Ind_{P}^G\epsilon_{\delta}\ra \pi\ra0\]
which lifts $E$. 
Choose a basis $\{v,w\}$ of $\epsilon_{\delta}$ as in \S\ref{subsection-PS} and define two elements of $\Ind_P^G\epsilon_{\delta}$ as follows: let $f_v\in \pi\subset \Ind_{P}^G\epsilon_{\delta}$ be the vector $f_2$   defined in (\ref{equation-f1f2}), and  $ f_w$ be the element characterized by  (write $\Pi=\smatr01p0$): 
\[\mathrm{Supp}(f_w)=P\Pi I_1=P\Pi N_0, \ \ f_w(b\Pi n)=b\cdot w,\ \forall b\in P, \forall n\in N_0.\]
Here $\cdot$ means the action of $P$ on $\epsilon_{\delta}$. 
It is easy to see that $f_w$ is well defined and fixed by $N_0$. If $h\in T_1$, then
\[hf_w=f_w+\delta(\Pi h\Pi^{-1})f_v.\] 
Moreover, the image of $f_w$ in (the quotient) $\pi$ lies in $\sigma=\rsoc_K\pi$, see \S\ref{subsection-PS}; 
that is, $f_w$ itself lies in $E$. 

Now view $f_w$ as a vector in $V$ via the inclusion $E\subset V$. Consider the operator $S:=\sum_{\lambda\in\F_q}\smatr{p}{[\lambda]}01\in\bFp[G]$. Because $f_w$ is fixed by $N_0$, we get by Lemma \ref{lemma-S2}(iii),  \[h(S(f_w))=S(h(f_w))=Sf_w+\delta(\Pi h\Pi^{-1})Sf_v=Sf_w+\delta(\Pi h\Pi^{-1})\lambda f_v.\]
Hence $f_w':=S(f_w)-\lambda f_w$ is fixed by $T_1$. By Lemma \ref{lemma-S2}(iv), for $n$ large enough,  $S^nf_w'$ is fixed by $I_1$. Moreover, by the proof of \cite[Lem. 4.1]{Pa07},  $S^{n+1}f_w'$ generates an  irreducible $K$-representation  which is isomorphic to $\sigma$. Since $\sigma$ appears with multiplicity one in the $K$-socle of $V$ by assumption, we deduce that  $S^{n+1}f_w'\in\bFp f_v$. In particular, $S^{n+2}f_w=\lambda S^{n+1}f_w$ in $V/\pi$, showing that $S^{n+1}f_w$ (which is non-zero) generates a principal series in $V/\pi$, which must be isomorphic to $\pi$.
\end{proof}

\subsection{Ordinary part}
In this subsection $L$ is a finite extension of $\Q_p$ of degree $n$. \medskip

  Recall that Emerton has defined a functor, called ordinary parts and denoted by $\Ord_P$, from the category of admissible smooth $\bFp$-representations of $G$  to the category of admissible smooth $\bFp$-representations of $T$. Let $\R^i\Ord_P$ be its right derived functors for $i\geq 1$. It follows from \cite[Prop. 3.6.1]{Em2} and \cite{EP} that $\R^i\Ord_P$ vanishes  for $i\geq n+1$.

Write $G_L=\Gal(\bQp/L)$.  Let $\epsilon:G_{L} \ra \Z_p^{\times}$ be $p$-adic the cyclotomic character ad $\omega$ be its reduction modulo $p$. View them as characters of $L^{\times}$ via the local Artin map normalized  in such a way that uniformizers of $L$ are sent to geometric Frobenii.  Denote by  $\alpha$ the character $\omega\otimes\omega^{-1}: T\ra \F_p^{\times}$.

\begin{proposition}\label{prop-Ord}
(i) If $U$ is an admissible smooth representation of $G$ and $V$ is a smooth representation of $G$, then
\[\Hom_G(\Ind_{\overline{P}}^GU,V)\cong \Hom_T(U,\Ord_P(V)).\]

(ii) There is a canonical isomorphism $\R^n\Ord_P(V)\cong V_N\otimes \alpha^{-1}$, where $\pi_N$ is the space of coinvariants (i.e. the usual Jacquet module of $V$ with respect to $P$).

(iii) We have $\Ord_P(\Ind_{ {P}}^GU)\cong U^s$ and $\R^n\Ord_P(\Ind_{{P}}^GU)\cong U\otimes\alpha^{-1}$.

(iv) If $\pi$ is an absolutely irreducible supersingular representation of $G$ over $\bFp$, then $\Ord_P(\pi)=\R^{n}\Ord_P(\pi)=0$.
\end{proposition}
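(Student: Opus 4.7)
The plan is to dispatch (i) and (ii) by citation and then deduce (iii) and (iv) from them. Part (i) is Emerton's adjunction theorem for ordinary parts (\cite{Em1}), stating that $\Ord_P$ is right adjoint to $\Ind_{\overline{P}}^G$ on the category of admissible smooth $\bFp$-representations. I would simply invoke it. Part (ii) is exactly the theorem of Emerton from \cite{Em3} advertised in the abstract of this paper, identifying the top derived ordinary parts with the unnormalised Jacquet functor up to the twist by $\alpha^{-1}$; the vanishing of $\R^i\Ord_P$ for $i>n$ already recorded in the excerpt via \cite{Em2, EP} is what makes $\R^n$ the top derived, so one simply cites the statement.

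For (iii), both isomorphisms are computations on a principal series. For $\Ord_P(\Ind_P^GU)\cong U^s$ I would use the Iwahori/Bruhat stratification $G=PN_0\sqcup PwN_0$ with $w=\smatr0110$: vectors supported on the closed cell $PN_0$ fail the Hecke-boundedness built into the definition of $\Ord_P$, while those supported on the open cell $PwN_0$ are parametrized by $U$, with the $T$-action twisted by $w^{-1}(-)w$, yielding $U^s$. Alternatively, one can derive it from (i) together with the observation $\Ind_P^GU\cong\Ind_{\overline P}^G(U^s)$ after Weyl conjugation of the Borel. For $\R^n\Ord_P(\Ind_P^G U)$, part (ii) reduces the problem to computing the Jacquet module $(\Ind_P^GU)_N$; the Bruhat-cell filtration collapses in the smooth mod $p$ setting to give $(\Ind_P^GU)_N\cong U$, and twisting by $\alpha^{-1}$ yields the claim.

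For (iv), both vanishings rest on the defining property of supersingulars: by Barthel--Livn\'e they are not subquotients of any parabolic induction. If $\Ord_P(\pi)\neq0$, then taking a nonzero $T$-subrepresentation $W\hookrightarrow \Ord_P(\pi)$ and applying (i) produces a nonzero $G$-map $\Ind_{\overline{P}}^G W\to\pi$, which is surjective by the irreducibility of $\pi$, exhibiting $\pi$ as a quotient of a parabolic induction -- contradiction, whence $\Ord_P(\pi)=0$. For the top derived part, (ii) reduces $\R^n\Ord_P(\pi)=0$ to $\pi_N=0$; if instead $\pi_N\neq0$, then the surjection $\pi\twoheadrightarrow\pi_N$ combined with Frobenius reciprocity produces a nonzero $G$-map $\pi\to\Ind_P^G(\pi_N)$, necessarily injective by irreducibility, exhibiting $\pi$ as a sub of a parabolic induction -- again a contradiction.

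Nothing in this plan is really an obstacle once part (ii) is accepted: the whole point of the paper is that \cite{Em3} gives the control on $\R^n\Ord_P$ which is exactly what makes the supersingular vanishing in (iv) work. The only place where one has to be slightly careful is pinning down the twist by $\alpha^{-1}$ in (iii), which amounts to tracing through the modulus character of $P$ in Emerton's normalisation; this is bookkeeping, not a difficulty.
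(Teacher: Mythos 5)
Parts (i)--(iii) of your plan are essentially the paper's proof: (i) is \cite[Thm. 4.4.6]{Em1}, and (iii) is obtained from the isomorphism $\Ind_P^GU\cong\Ind_{\overline{P}}^GU^s$ together with Emerton's computations; your direct Bruhat-cell arguments are a fine substitute (and the vanishing of the open-cell coinvariants in characteristic $p$ does give $(\Ind_P^GU)_N\cong U$, so the reduction of the second half of (iii) to (ii) works). Two corrections, though. First, (ii) is \emph{not} the theorem of \cite{Em3} advertised in the abstract — that is the global Theorem 5.6.11, used in \S4 of the paper; the correct reference is \cite[Prop. 3.6.2]{Em2}. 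Second, your parenthetical claim that $\Ord_P(\Ind_{\overline{P}}^GW)\cong W$ can be ``derived from (i)'' is not right: adjunction only produces the unit map $W\to\Ord_P(\Ind_{\overline{P}}^GW)$, and its bijectivity is precisely the content of \cite[Cor. 4.3.5]{Em1} (or of your cell computation); it is not formal.

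In (iv) there is a genuine gap, in both halves but seriously in the second. Supersingularity, as used here, excludes $\pi$ from being a subquotient of $\Ind_P^G\chi$ for a \emph{character} $\chi$ of $T$. In the first vanishing you should therefore take a character subrepresentation $\chi\hookrightarrow\Ord_P(\pi)$ (it exists because $\Ord_P(\pi)$ is admissible and $T$ is commutative, so some space of invariants under a compact open subgroup is finite-dimensional and contains an eigenvector), not an arbitrary nonzero $W$; this is a one-line fix. In the second vanishing the analogous fix is not automatic: $\pi_N$ is not a character, so an embedding $\pi\hookrightarrow\Ind_P^G(\pi_N)$ does not by itself contradict supersingularity. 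To repair your route you would need a character \emph{quotient} of $\pi_N$ — for instance by noting that $\pi_N$ is admissible (via (ii) and the fact that $\R^n\Ord_P$ preserves admissibility) and passing to the smooth dual — and only then apply Frobenius reciprocity to get $\pi\hookrightarrow\Ind_P^G\chi$. The paper argues differently and more directly: the subspace $\pi(N)$ spanned by the vectors $(n-1)v$ is nonzero and $P$-stable, and the main theorem of \cite{Pa07} says $\pi|_P$ is irreducible, so $\pi(N)=\pi$ and $\pi_N=0$. Either the repaired duality argument or the appeal to \cite{Pa07} is needed; as written, your last step does not close.
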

\begin{proof}
(i) is \cite[Theorem 4.4.6]{Em1}. (ii) is \cite[Prop.3.6.2]{Em2}. (iii) follows from \cite[Cor. 4.3.5]{Em1} and \cite[Prop. 3.6.2]{Em2}, using the natural isomorphism $\Ind_{P}^GU\cong \Ind_{\overline{P}}^GU^s$. For (iv), the first assertion follows from (i); the second  follows from (ii). To see this, 
let  $\pi(N)\subset \pi$ be the subspace spanned by  vectors of the form $(n-1)v$, for all $n\in N$ and $v\in \pi$, so that $\pi_N=\pi/\pi(N)$. It is easily checked that $\pi(N)$ is stable under the group $P$ and non-zero. The result follows from the main result of \cite{Pa07}, which says that $\pi|_P$ is irreducible. 
\end{proof}

\subsection{A definition}\label{subsection-definition}
We recall a definition  due to Emerton \cite[\S3.6]{Em3}, which plays a crucial role below.

We normalize the local Artin map $L^{\times}\hookrightarrow G_L^{\rm ab}$ in such a way that uniformizers of $L$ are sent to geometric Frobenii.

Denote by $S$ the following subtorus of $T$
\[S:=\Bigl\{\matr a001| a\in L^{\times}\Bigr\}\subset T \]
so that $S\cong L^{\times}$. The composite of this isomorphism with the local Artin map defines an injection $\iota: S\hookrightarrow G_L^{\rm ab}$, and hence an anti-diagonal embedding
\begin{equation}\label{equation-anti-diagonal}S\hookrightarrow G_L^{\rm ab}\times S,\ \ s\mapsto (\iota(s),s^{-1}).\end{equation}

\begin{definition}\label{definition-ab-S}
Let $V$ be a  representation of $G_{L}\times S$.

(i) Let $V^{\rm ab}$ be  the maximal sub-object of $V$ on which $G_{L}$ acts through its maximal abelian quotient $G_L^{\rm ab}$. This is a $G_L\times S$-sub-representation of $V$.

(ii) Let $V^{\mathrm{ab}, S}$ be the subspace of $V^{\mathrm{ab}}$ consisting of $S$-fixed vectors, where $S$ acts through the anti-diagonal embedding (\ref{equation-anti-diagonal}) and the action of $G_L^{\rm ab}\times S$ on $V^{\rm ab}$. 
\end{definition}
\medskip

The space $V^{\mathrm{ab},S}$ is stable under the action of $G_L$ and, of course, this action factors through $G_L^{\rm ab}$.  
\begin{lemma}\label{lemma-GxS}
Let $V$ be a representation of $G_L\times S$. Assume that the action of $G_L$ on $V/V^{\mathrm{ab},S}$ factors through $G_{L}^{\rm ab}$. Then, for any subquotient $W$ of $V$ (as $G_L\times S$-representations), the action of $G_L$ on $W/W^{\mathrm{ab},S}$ also factors through $G_L^{\mathrm{ab}}$. 
\end{lemma}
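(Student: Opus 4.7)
The plan is to decompose the problem into the two elementary cases of a sub-object and of a quotient, since any subquotient is obtained by composing these two operations, and to verify the compatibility of the constructions $V \mapsto V^{\mathrm{ab}}$ and $V \mapsto V^{\mathrm{ab},S}$ with each.

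For a $G_L \times S$-sub-object $W \hookrightarrow V$, the identification $W^{\mathrm{ab}} = W \cap V^{\mathrm{ab}}$ holds, as both sides are characterized as the largest sub-object of $W$ contained in $V^{\mathrm{ab}}$: on one hand $W \cap V^{\mathrm{ab}}$ is a sub-object of $W$ on which $G_L$ acts through $G_L^{\mathrm{ab}}$, and on the other $W^{\mathrm{ab}} \subset V$ enjoys the same property, so by maximality of $V^{\mathrm{ab}}$ it lies in $V^{\mathrm{ab}}$. This extends to $W^{\mathrm{ab},S} = W \cap V^{\mathrm{ab},S}$ because the anti-diagonal $S$-fixation condition inside $V^{\mathrm{ab}}$ restricts to the same condition inside $W^{\mathrm{ab}}$. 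Consequently there is an injection $W/W^{\mathrm{ab},S} \hookrightarrow V/V^{\mathrm{ab},S}$, and since the target carries a $G_L^{\mathrm{ab}}$-action by hypothesis, so does the source.

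For a quotient $\phi : V \twoheadrightarrow W$, the dual argument applies: any quotient of a $G_L^{\mathrm{ab}}$-acting module still has $G_L$ factoring through $G_L^{\mathrm{ab}}$, so $\phi(V^{\mathrm{ab}}) \subset W^{\mathrm{ab}}$, and $(G_L \times S)$-equivariance of $\phi$ then yields $\phi(V^{\mathrm{ab},S}) \subset W^{\mathrm{ab},S}$. By the universal property this induces a surjection $V/V^{\mathrm{ab},S} \twoheadrightarrow W/W^{\mathrm{ab},S}$, which transfers the hypothesis from $V$ to $W$. I do not foresee any genuine obstacle: the argument is formal, and the only mildly delicate point is that $V^{\mathrm{ab},S}$ is cut out of $V^{\mathrm{ab}}$ by the $(G_L \times S)$-equivariant equations $\iota(s) \cdot v = s \cdot v$ for $s \in S$, which is precisely what ensures that the subspace is preserved in both directions.
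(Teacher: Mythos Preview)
Your proof is correct and follows essentially the same approach as the paper: reduce to the cases of a sub-object and a quotient, use the identity $W^{\mathrm{ab},S} = W \cap V^{\mathrm{ab},S}$ for sub-objects to get an injection on the quotients, and handle the quotient case by the obvious image containment. The paper's version is more terse (it notes $V^{\mathrm{ab}} = V^{H_L}$ for $H_L = \ker(G_L \twoheadrightarrow G_L^{\mathrm{ab}})$ and calls the quotient case ``obvious''), but the content is the same.
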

\begin{proof}
 By definition, if $H_L$ denotes the kernel of the quotient map $G_L\twoheadrightarrow G_L^{\rm ab }$, then $V^{\rm ab}=V^{H_L}$ and $V^{\mathrm{ab},S}=(V^{H_L})^{S}$.  If $W$ is a sub-$G_L\times S$-representation of $V$, we deduce that $V^{\mathrm{ab},S}\cap W=W^{\mathrm{ab},S}$, hence a $G_L$-equivariant inclusion $W/W^{\mathrm{ab},S}\hookrightarrow V/V^{\mathrm{ab},S}$ and the result holds in this case.  If $W$ is a quotient of $V$, the result is obvious. The general case follows from this.
\end{proof}

\section{Local results}
\label{section-local} We keep notations of Section \ref{section-preparation}. Assume $L$ is \emph{unramified} over $\Q_p$ of degree $f\geq 2$.

\subsection{Construction of Breuil-Pa\v{s}k\={u}nas}\label{subsection-BP}

Let $\brho:\Gal(\bQp/L)\ra \GL_2(\bFp)$ be a continuous representation. Assume $\brho$ is reducible   and \emph{generic} in the sense of \cite[\S11]{BP}, i.e. $\brho$ is of the form
\[\brho\cong  \matr{\mathrm{nr}(\mu)\omega_f^{(r_0+1)+\cdots+p^{f-1}(r_{f-1}-1)}}{*}0{\mathrm{nr}(\mu^{-1})}\otimes\eta\]
where $\mu\in \bFp^{\times}$, $r_{i}\in\{0,...,p-3\}$ with $(r_0,...,r_{f-1})\neq (0,...,0),(p-3,...,p-3)$, $\omega_f$ is Serre's fundamental character of level $f$, and $\eta:\Gal(\bQp/L)\ra \bFp^{\times}$ is a continuous character.

 To $\brho$ is associated a set of weights, called Serre weights and denoted by $\mathscr{D}(\brho)$, as follows (see \cite[\S11]{BP} or \cite{BDJ}). First, the genericity condition on $\brho$ implies that $\brho$ is in the category of Fontaine-Lafaille \cite{FL}.   Writing down the associated Fontaine-Lafaille module, we define a subset $J_{\brho}$ of $\mathcal{S}:=\{0,...,f-1\}$ which we refer to \cite[\S2]{Hu13} for its precise definition. We recall that $J_{\brho}$ measures how far $\brho$ is from splitting, in the sense that $J_{\brho}=\mathcal{S}$ if and only if $\brho$ is split. Second, we define $\mathscr{D}(x_0,...,x_{f-1})$ to be the set of $f$-tuples $\tau=(\tau_0(x_0),...,\tau_{f-1}(x_{f-1}))$ satisfying the following conditions:
\begin{enumerate}
\item[(i)] $\tau_i(x_i)\in \{x_i,x_i+1,p-2-x_i,p-3-x_i\}$
\item[(ii)] if $\tau_i(x_i)\in \{x_i,x_i+1\}$, then $\tau_{i+1}(x_{i+1})\in\{x_{i+1},p-2-x_{i+1}\}$

\item[(iii)] if $\tau_i(x_i)\in\{p-2-x_i,p-3-x_i\}$, then $\tau_{i+1}(x_{i+1})\in\{p-3-x_{i+1},x_{i+1}+1\}$

\item[(iv)] if $\tau_i(x_i)\in\{p-3-x_i,x_i+1\}$, then $i\in J_{\brho}$
\end{enumerate}
with the conventions $x_f:=x_0$ and $\tau_f(x_{f}):=\tau_0(x_0)$.  Then  $\mathscr{D}(\brho)$ can be explicitly described as
\[\mathscr{D}(\brho)=\bigl\{(\tau_0(r_0),...,\tau_{f-1}(r_{f-1}))\otimes{\det}^{e(\tau)(r_0,...,r_{f-1})},\  \tau\in\mathscr{D}(x_0,...,x_{f-1})\bigr\},\]
where $e(\tau)(x_0,...,x_{f-1})$ is defined as in \cite[\S4]{BP}. Remark that  there are $2^{|J_{\brho}|}$ elements in $\mathscr{D}(\brho)$, and it always contains the weight $\sigma_0:=(r_0,...,r_{f-1})\otimes\eta\circ\det$. For $\sigma\in\mathscr{D}(\brho)$ which corresponds to $\tau\in \mathcal{D}(x_0,...,x_{f-1})$,  we set \[\ell(\sigma):=\mathrm{Card}\{i\in\mathcal{S}; \tau_i(x_i)\in \{p-2-x_i,p-3-x_i\}\},\]
and call it the \emph{length} of $\sigma$.

Let $D_0(\brho)$ be the maximal representation of $\GL_2(\F_q)$ such that 
\begin{enumerate}
\item[(i)] the $\GL_2(\F_q)$-socle of $D(\brho)$ is $\oplus_{\tau\in\mathscr{D}(\brho)}\tau$ 

\item[(ii)] each Serre weight $\tau\in\mathscr{D}(\brho)$ occurs exactly once in $D(\brho)$.
\end{enumerate}
Let $D_1(\brho)=D_0(\brho)^{I_1}$ with the induced action of $I$ and we choose an action of $\Pi=\smatr{0}1p0$ such that $\Pi^2 $ is the identity. 
The amalgam structure of $G$ (more precisely, of $\SL_2(L)$) then allows Breuil and Pa\v{s}k\={u}nas to  construct a family of smooth admissible representations of $G$ over $\bFp$, with  $K$-socle being $\oplus_{\sigma\in \mathscr{D}(\brho)}\sigma$.  The construction is as follows (see \cite{BP}). We first embed $K$-equivariantly $D_0(\brho)$ inside an injective envelope  $\Omega:=\rInj_{K}(\oplus_{\sigma\in \mathscr{D}(\brho)}\sigma)$. Then using the decomposition of $\Omega|_{I}$  we can give an action of $\smatr{0}1p0$ on $\Omega$ which is compatible with the one on $D_1(\brho)$ via the embedding we have chosen. In such a way, a theorem of Ihara allows us to get a smooth action of $G$ on $\Omega$ and we let $V$ be the sub-representation generated by $D_0(\brho)$. In particular, $V$ depends on the choice of the action of $\smatr{0}1p0$ on $\Omega$, and actually there are quite a lot of such choices. Finally, we twist $V$ by $\eta\circ\det$ to recover the determinant of $\brho$. Denote by $\mathcal{V}(\brho)$ the family of representations $V$ obtained in this way.

It is expected that such a $V\in \mathcal{V}(\brho)$ has length $f$, or, at least, there exists one $V\in\mathcal{V}(\brho)$ which has length $f$. Precisely, we hope that $V$ has a  filtration of length $f$ of the form (where $(\pi_i)_i$ denote the graded pieces of the   filtration)
\[\pi_0\ \ligne\ \pi_1\ \ligne\ \cdots\ \ligne\ \pi_f\]
such that $\pi_i$ is a principal series if $i\in\{0,f\}$ and supersingular otherwise.
This is the case when $\brho$ is (reducible) split; in this case, $V$ is a direct sum of $\pi_i$'s, $0\leq i\leq f$ (see \cite[\S19]{BP}). However, it is not even known whether such a $V$ has finite length  if $\brho$ is non-split, except for the case $L=\Q_p$ (cf. \cite[Conj. 2.3.7]{Em1}).

In the following, we  assume $\brho$ is \emph{non-split}.  

\begin{lemma}\label{lemma-V-socle}
For any $V\in\mathcal{V}(\brho)$, the $G$-socle of $V$ is an irreducible principal series and isomorphic to $\cInd_{KZ}^G\sigma_0/(T-\lambda)$ for some $\lambda\in \bFp^{\times}$.
\end{lemma}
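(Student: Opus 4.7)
My plan is to reduce the question to analyzing which Serre weights $\sigma \in \mathscr{D}(\brho)$ can support an irreducible $G$-subrepresentation of $V$, and then determine the shape of any such subrepresentation via Barthel-Livn\'e theory.

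First I would let $\pi \subset V$ be any irreducible $G$-subrepresentation. Then $\rsoc_K \pi \subset \rsoc_K V = \bigoplus_{\sigma \in \mathscr{D}(\brho)} \sigma$, so $\pi$ contains some $\sigma \in \mathscr{D}(\brho)$. By Frobenius reciprocity, $\pi$ is a $G$-equivariant quotient of $\cInd_{KZ}^G \sigma$ (after a central character twist). Using the Hecke operator $T$ and the Barthel-Livn\'e classification, $\pi$ is of the form $\cInd_{KZ}^G \sigma/(T-\lambda) \otimes \chi'\circ\det$ for some $\lambda \in \bFp$ and character $\chi'$. The genericity hypothesis on $\brho$ forces $\dim \sigma \geq 2$ and rules out characters and Steinberg twists, so either $\lambda \neq 0$ (principal series case) or $\lambda = 0$ (supersingular case).

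Next I would argue that the only $\sigma \in \mathscr{D}(\brho)$ that can arise is $\sigma_0 = (r_0,\ldots,r_{f-1})\otimes\eta\circ\det$ (the unique weight with $\ell(\sigma_0)=0$). The argument uses the structure of $D_0(\brho)$: any weight $\sigma \in \mathscr{D}(\brho) \setminus \{\sigma_0\}$ is linked within $D_0(\brho)$ to a weight of strictly smaller length via a non-split $K$-extension, while the chosen $\Pi$-action on $D_1(\brho)$ (which pairs $I_1$-invariants of $\sigma$ with $I_1$-invariants of its partner weights in $D_0(\brho)$) forces the $G$-subrepresentation generated by $\sigma^{I_1}$ to properly contain a subrepresentation built from a smaller-length weight. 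Iterating this descent, any irreducible subrepresentation of $V$ must contain $\sigma_0$ in its $K$-socle, and since $\sigma_0$ occurs in $\rsoc_K V$ with multiplicity one, such a $\pi$ is unique.

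Finally I would show $\lambda \neq 0$, ruling out that $\pi$ is supersingular. Here one uses that a supersingular $\cInd_{KZ}^G \sigma_0/T \otimes \chi'\circ\det$ has its Serre weights governed by the theory of \cite{BP}, and in particular such a supersingular representation would embed into $V$ only if the chosen $\Pi$-action on $D_1(\brho)$ matched the supersingular normalization. But the Breuil-Pa\v{s}k\=unas construction is explicitly set up so that $\Pi$ acts on the $I_1$-invariant line of $\sigma_0$ with a nonzero eigenvalue $\lambda$ (this corresponds to the freedom in choosing the action of $\smatr01p0$, and for any admissible choice one gets $\lambda \in \bFp^\times$). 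Thus $\pi \cong \cInd_{KZ}^G \sigma_0/(T-\lambda) \otimes \chi'\circ\det$ is a principal series, and uniqueness from the previous step finishes the proof.

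The main obstacle is the second step: controlling the $G$-span of $\sigma^{I_1}$ for $\sigma \neq \sigma_0$ and showing it cannot be an irreducible subrepresentation because of the forced linking to smaller-length weights. This requires careful tracking of how the $\Pi$-action in the BP construction couples the $\cH$-isotypic components of $D_1(\brho)$ across the various Serre weights, together with the amalgam description of $G$ via $KZ$ and $\Pi I Z$.
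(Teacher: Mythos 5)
Your overall route is the same as the paper's: reduce to an irreducible subrepresentation $\pi\subset V$, use the coupling of $I_1$-invariants by $\Pi$ inside $D_1(\brho)$ (the ``$\chi\mapsto\chi^s$'' process of \cite[Thm.~15.4(ii)]{BP}) to descend from an arbitrary $\sigma\in\mathscr{D}(\brho)$ in $\rsoc_K\pi$ down to $\sigma_0$, and then identify $\pi$ with the $G$-span of $\sigma_0$, whose shape is read off from Barthel--Livn\'e. Your step 2 is exactly the paper's mechanism, stated as a descent on the length $\ell(\sigma)$, and your appeal to multiplicity one of $\sigma_0$ in $\rsoc_K V$ is fine.

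The genuine gap is in your step 3, where you rule out the supersingular case ($\lambda=0$). Your justification is that ``$\Pi$ acts on the $I_1$-invariant line of $\sigma_0$ with a nonzero eigenvalue $\lambda$''. This is false: $\Pi$ conjugates the $I$-character $\chi_{\sigma_0}$ of $\sigma_0^{I_1}$ to $\chi_{\sigma_0}^s$, and by genericity $\sigma_0$ has dimension $\geq 2$, so $\chi_{\sigma_0}\neq\chi_{\sigma_0}^s$ and $\Pi$ cannot stabilize that line at all; it sends it to a different $\cH$-eigenline of $D_1(\brho)$. (Note also that since $\Pi^2=\id$ in the construction, a $\Pi$-eigenvalue could only be $\pm1$, whereas the remark following the lemma stresses that every $\lambda\in\bFp^{\times}$ actually occurs -- another sign that $\lambda$ is not a $\Pi$-eigenvalue.) So as written you have no argument that $\langle G\cdot\sigma_0\rangle$ is not supersingular. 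The paper closes this point differently: it is built into the Breuil--Pa\v{s}k\={u}nas construction in the reducible non-split case that $\Pi$ sends $\sigma_0^{I_1}$ to the $\chi_{\sigma_0}^s$-eigenline sitting inside $D_{0,\sigma_0}(\brho)^{I_1}$, and consequently the sub-$G$-representation generated by $\sigma_0$ is an irreducible principal series ``by construction''; only then does one invoke \cite[Thm.~30]{BL} to write it as $\cInd_{KZ}^G\sigma_0/(T-\lambda)\otimes\chi'\circ\det$ with $\lambda\in\bFp^{\times}$ a Hecke eigenvalue (the value of $\lambda$ reflecting the scalar freedom in the chosen $\Pi$-action). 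To repair your argument you would either have to cite this feature of the construction directly, or redo the Hecke-operator computation (as in \S19--20 of \cite{BP}) showing that the vector $\Pi v_0$, $v_0\in\sigma_0^{I_1}$, produces a nonzero $T$-eigenvalue on the span of $\sigma_0$; the assertion about a $\Pi$-eigenvalue on $\sigma_0^{I_1}$ cannot be the basis of the proof.
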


\begin{proof}
Let $\pi$ be an irreducible sub-representation of $V$. Let $\sigma\in\mathscr{D}(\brho)$ be a Serre weight which is contained in $\rsoc_K\pi$. 
The proof of \cite[Thm. 15.4(ii)]{BP} shows that we can go from $D_{0,\sigma}(\brho)^{I_1}$ to $D_{0,\sigma_0}(\brho)^{I_1}$ using $\chi\mapsto \chi^s$ (with notations there). In particular, we see that $\sigma_0$ is also contained in $\pi$, hence $\langle G.\sigma_0\rangle \subset \pi$. But, by construction, $\langle G\cdot \sigma_0\rangle$ is an (irreducible) principal series, so that $\pi=\langle G\cdot\sigma_0\rangle$. The last assertion follows from \cite[Thm. 30]{BL}.
\end{proof}
\begin{remark}
In Lemma \ref{lemma-V-socle},  any  $\lambda\in \bFp^{\times}$ could happen. In fact, the construction in \cite{BP} does not take into account of the whole information of $\brho$.  For the representations arising from the cohomology of Shimura curves, $\lambda$ is uniquely determined by $\brho$ (see \cite{BD11} or \S\ref{section-global}).
\end{remark}

\begin{proposition}\label{prop-extension-typeA}
Let $V\in \mathcal{V}(\brho)$. Assume that $\Ord_P(V)$ is one-dimensional. Then $V$ contains a sub-representation $\pi$ which is of length 2 and fits into an exact sequence
\[0\ra \pi_0\ra \pi\ra \pi_1\ra0\]
such that $\pi_0$ is a principal series 
and $\pi_1$ is a supersingular representation. Moreover, $\pi_1$ is uniquely determined (by $V$) in the following two cases:

(i) either   $\mathscr{D}(\brho)=\{\sigma_0\}$, in which case $\rsoc_{K}(\pi_1)=\oplus_{\sigma\in\mathscr{D}(\brho^{\rm ss}),\ell(\sigma)=1}\sigma $; 

(ii) or $V^{I_1}=D_1(\brho)$, in which case we only have an inclusion
\begin{equation}\label{equation-soc_pi1}\bigoplus_{\sigma\in\mathscr{D}(\brho^{\rm ss}),\ell(\sigma)=1}\sigma\subseteq \rsoc_{K}(\pi_1),\end{equation} which is an equality when $f=2$.
\end{proposition}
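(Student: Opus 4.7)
The strategy combines Lemma~\ref{lemma-V-socle} with the $\Ord_P$ hypothesis. Let $\pi_0 := \rsoc_G V$, which by the lemma is an irreducible principal series $\Ind_P^G \chi$; by Proposition~\ref{prop-Ord}(iii), $\Ord_P(\pi_0) = \chi^s$ is one-dimensional. The key intermediate claim is that $V$ contains no non-split extension of $\pi_0$ by itself. Indeed, by Proposition~\ref{prop-extension-PS}(i) every such extension is isomorphic to $\Ind_P^G\epsilon_\delta$ for some nonzero $\delta\in\Hom(L^\times,\bFp)$ (with $\epsilon_\delta$ as in~(\ref{equation-delta})), and Proposition~\ref{prop-Ord}(iii) gives $\Ord_P(\Ind_P^G\epsilon_\delta) = \epsilon_\delta^s$, which is two-dimensional. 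The existence of such a subrepresentation in $V$ would therefore contradict $\dim\Ord_P(V)=1$ by left exactness of $\Ord_P$.

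Existence of $\pi$ then follows quickly. Pick any irreducible $\pi_1 \hookrightarrow \rsoc_G(V/\pi_0)$ and let $\pi \subseteq V$ be its preimage, so we have an exact sequence $0 \to \pi_0 \to \pi \to \pi_1 \to 0$. If $\pi_1 \cong \pi_0$, the sequence must be non-split (otherwise $\pi_0$ appears twice in $\rsoc_G V$, contradicting Lemma~\ref{lemma-V-socle}), contradicting the intermediate claim. If $\pi_1\not\cong \pi_0$ is non-supersingular, then $L\neq\Q_p$ (since $f\geq 2$) lets us invoke Proposition~\ref{prop-extension-PS}(ii) to obtain $\Ext^1_G(\pi_1,\pi_0)=0$, so the sequence splits and $\pi_1\hookrightarrow V$, again contradicting $\rsoc_G V = \pi_0$. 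Therefore $\pi_1$ is supersingular, and $\pi$ is the required length-$2$ subrepresentation.

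For uniqueness I pass to a $K$-socle analysis of $V/\pi_0$. By the BP construction each Serre weight in $\mathscr{D}(\brho)$ occurs in $\rsoc_K V$ with multiplicity one, so in particular $\dim\Hom_K(\sigma_0,V)=1$; the intermediate claim combined with Lemma~\ref{lemma-converse} then forces $\Hom_K(\sigma_0,V/\pi_0)=0$. By Proposition~\ref{Prop-K-extension}, the remaining weights that can appear in $\rsoc_K(V/\pi_0)$ are the $\sigma_j(\pi_0)$ for $0\leq j\leq f-1$, which one checks are precisely the length-$1$ Serre weights of $\mathscr{D}(\brho^{\mathrm{ss}})$. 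Pinning down a unique $\pi_1$ with the stated $K$-socle then requires tracking the BP construction: in case~(i), $\mathscr{D}(\brho)=\{\sigma_0\}$ leaves no room for competing Serre weights in $V$; in case~(ii), $V^{I_1}=D_1(\brho)$ bounds multiplicities and gives only the inclusion in general. The main obstacle I anticipate is proving the sharp equality $\rsoc_K\pi_1 = \bigoplus_{\sigma\in\mathscr{D}(\brho^{\rm ss}),\ell(\sigma)=1}\sigma$ in case~(ii) when $f=2$, which will likely use Corollary~\ref{cor-Sigma} together with an explicit analysis of $D_1(\brho)$ in small rank.
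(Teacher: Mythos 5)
Your existence argument is correct and is essentially the paper's: the $G$-socle $\pi_0$ is a principal series by Lemma \ref{lemma-V-socle}, a non-split self-extension of $\pi_0$ inside $V$ would have to be $\Ind_P^G\epsilon_\delta$ by Proposition \ref{prop-extension-PS}(i), whose ordinary part is $2$-dimensional by Proposition \ref{prop-Ord}(iii), contradicting $\dim\Ord_P(V)=1$; and Proposition \ref{prop-extension-PS}(ii) rules out non-supersingular, non-isomorphic subrepresentations of $V/\pi_0$. (One small omission: to know $V/\pi_0$ has an irreducible subrepresentation at all, the paper invokes Vign\'eras' theorem that quotients of admissible representations are admissible; you use $\rsoc_G(V/\pi_0)\neq 0$ without justification.)

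The second half of the statement, however, is where the real work lies, and your proposal does not prove it. Two concrete gaps. First, your assertion that ``the remaining weights that can appear in $\rsoc_K(V/\pi_0)$ are the $\sigma_j(\pi_0)$'' is only valid in case (i): in case (ii) the weights $\sigma\in\mathscr{D}(\brho)$ with $\sigma\neq\sigma_0$ (of arbitrary length) embed into $V/\pi_0$ as well, since they sit in $\rsoc_K V$ but not in $\pi_0$; Proposition \ref{Prop-K-extension} only controls the weights whose preimage in $V$ is a \emph{non-split} $K$-extension of $\pi_0|_K$. This is exactly why (\ref{equation-soc_pi1}) is only an inclusion in general, and why the paper needs the extra argument that $\Hom_K(\Sigma,V)=0$ (where $\Sigma$ is the non-split extension of $\sigma_j(\pi_0)$ by $\sigma_0$), deduced from the multiplicity-freeness of $V^{I_1}=D_1(\brho)$, to discard the copies of $\sigma_j(\pi_0)$ arising from $\Omega_{\sigma_0}/\pi_0$. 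Second, and more seriously, uniqueness of $\pi_1$ and the determination of its $K$-socle are precisely the steps you defer (``requires tracking the BP construction'', ``the main obstacle I anticipate''). The missing mechanism is the $\chi\mapsto\chi^s$ propagation argument of \cite[Thm. 15.4]{BP}: one shows that \emph{any} irreducible subrepresentation of $V/\pi_0$ containing one weight $\sigma_j(\pi_0)$ must contain all the length-one weights of $\mathscr{D}(\brho^{\rm ss})$ in its $K$-socle — in case (i) via the subrepresentation $I(\sigma_0,\sigma^{[s]})$ of $D_0(\brho)$ and \cite[Cor. 3.12, Cor. 4.11]{BP}, and in case (ii) via the comparison of $D_{0,1}(\brho)$ with $D_{0,1}(\brho^{\rm ss})$ (\cite[Thm. 13.1]{BP}, \cite[Lem. 5.1]{Hu13}), reducing to the split case — together with multiplicity one of each such weight in $\rsoc_K(V/\pi_0)$; uniqueness and the socle statements follow from this, not from ``no room for competing Serre weights''. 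Finally, the $f=2$ equality needs no analysis of $D_1(\brho)$ or Corollary \ref{cor-Sigma}: for $f=2$ and $\brho$ non-split, $\mathscr{D}(\brho)$ contains no weight of length $\geq 2$, so the upper bound on $\rsoc_K(V/\pi_0)$ already collapses to the length-one weights.
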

\begin{remark}\label{remark-ell=1}
It is easy to check that the set $\{\sigma\in\mathcal{\brho^{\rm ss}}: \ell(\sigma)=1\}$ is exactly the set $\{\sigma_j(\pi_0): 0\leq j\leq f-1\}$ (well defined thanks to the genericity condition on $\brho$).\end{remark}
\begin{proof}
Lemma \ref{lemma-V-socle} implies that the $G$-socle of $V$ is a principal series; we denote it by $\pi_0$. By assumption, $\Ord_P(V)$ is one dimensional. We claim that $V/\pi_0$ does not admit principal series as a sub-representation.  In fact, if $\pi'\hookrightarrow V/\pi_0$ is a principal series, then $\Ext^1_{G}(\pi',\pi_0)\neq0$ since $\pi_0$ is the $G$-socle of $V$. But Proposition \ref{prop-extension-PS} implies that $\pi'\cong \pi_0$, and the corresponding extension is of the form (\ref{equation-Ind(delta)}) for certain $\delta\in\Hom(L^{\times},\bFp)$. The claim follows from the assumption on $\Ord_P(V)$ using Proposition \ref{prop-Ord}(iii). By \cite[Thm. 2]{Vi}$, V/\pi_0$ is still a smooth admissible representation, hence it contains at least one irreducible sub-representation; by the claim it must be supersingular. This  shows the first assertion of the proposition.

We define $\pi_1$ and determine its $K$-socle under assumptions (i) or (ii). 
First assume $\mathscr{D}(\brho)=\{\sigma_0\}$. By  construction recalled above,  $V$ sits inside a certain $\Omega$ such that $\Omega|_K$ is an injective envelope of $\sigma_0$. Proposition \ref{Prop-K-extension} then implies that 
\[\rsoc_K(\Omega/\pi_0)=\bigl(\bigoplus_{j=0}^{f-1}\sigma_0\bigr)\oplus \bigl(\bigoplus_{j=0}^{f-1}\sigma_j(\pi_0)\bigr).\]
By Lemma \ref{lemma-converse}, we deduce that $\rsoc_K(V/\pi_0)$ does not contain $\sigma_0$ so that 
\[\rsoc_K(V/\pi_0)\subseteq \bigl(\bigoplus_{j=0}^{f-1}\sigma_j(\pi_0)\bigr).\]
 We claim that $V/\pi_0$ contains a unique irreducible sub-representation. In fact, let $\pi_1\hookrightarrow V/\pi_0$ be an irreducible sub-representation and let $\sigma$ be a weight contained in the $K$-socle of $\pi_1$. Then $\sigma$ is of the form $\sigma_j(\pi_0)$ for some $0\leq j\leq f-1$. We check that all other  $\sigma_j(\pi_0)$'s are also contained in $\pi_1$ under the process $\chi\mapsto \chi^s$ (argument as in \cite[Thm. 15.4]{BP}). To do this, we may assume $j=1$ so that $\sigma=(p-2-r_0,r_1+1,r_2,...,r_{f-1})$ (up to twist). If we let $I(\sigma_0,\sigma^{[s]})$ be the unique sub-representation of $D_0(\brho)$ with cosocle $\sigma^{[s]}$ (see \cite[Cor. 3.12]{BP}), where $\sigma^{[s]}:=(r_0+1,p-2-r_1,p-1-r_2,...,p-1-r_{f-1})$ (up to a twist uniquely determined by $\sigma$) is as in \cite[page 9]{BP}. Using \cite[Cor. 4.11]{BP}, $\sigma_0(\pi_0)$ occurs in $I(\sigma_0,\sigma^{[s]})$ as an irreducible constituent. This shows that $\sigma_0(\pi)$ also occurs in the $K$-socle of $\pi$. Repeating this argument gives the result and shows that $\rsoc_K\pi_1=\oplus_{j=0}^{f-1}\sigma_{j}(\pi_0)= \oplus_{\sigma\in\mathscr{D}(\brho),\ell(\sigma)=1}\sigma$ by Remark \ref{remark-ell=1}.

Now assume (ii) $V^{I_1}=D_1(\brho)$.  
By the construction,  $V$ sits inside certain $\Omega\in \mathrm{Mod}_G^{\rm sm}$ such that $\Omega|_K$ is an injective envelope of $\oplus_{\sigma\in\mathscr{D}(\brho)}\sigma$. Since $\rsoc_K(\pi_0)=\sigma_0$, we can $K$-equivariantly  decompose $\Omega=\oplus_{\sigma\in\mathscr{D}(\brho)}\Omega_\sigma$ so that $\rsoc_K(\Omega_{\sigma})=\sigma$ for each $\sigma$ and that $\pi_0$ is contained in $\Omega_{\sigma_0}$. Therefore $\Omega/\pi_0=(\Omega_{\sigma_0}/\pi_0)\oplus (\oplus_{\sigma\neq \sigma_0}\Omega_{\sigma})$ and Proposition \ref{Prop-K-extension} then implies that 
\[\rsoc_K(V/\pi_0)\subseteq \rsoc_K(\Omega/\pi_0)=\bigl(\bigoplus_{j=0}^{f-1}\sigma_0\bigr)\oplus \bigl(\bigoplus_{j=0}^{f-1}\sigma_j(\pi_0)\bigr)\oplus \bigl(\bigoplus_{\sigma\in\mathscr{D}(\brho),\sigma\neq \sigma_0}\sigma\bigr).\]
Here, although $\sigma_j(\pi_0)$ is possibly isomorphic to some $\sigma\in \mathscr{D}(\brho)$ in view of Remark \ref{remark-ell=1} (automatically of length 1), we use $\sigma_j(\pi)$ to emphasize that it is a {sub}-representation of $\Omega_{\sigma_0}/\pi_0$ and use $\sigma\in \mathscr{D}(\brho)$ to emphasize that it is contained in $\Omega_{\sigma}$. Lemma \ref{lemma-converse} implies that $V/\pi_0$ does not admit $\sigma_0$ as a sub-$K$-representation, hence
\[\bigoplus_{\sigma\in\mathscr{D}(\brho),\sigma\neq \sigma_0}\sigma\subseteq \rsoc_K(V/\pi_0)\subseteq \bigl(\bigoplus_{j=0}^{f-1}\sigma_j(\pi_0)\bigr)\oplus \bigl(\bigoplus_{\sigma\in\mathscr{D}(\brho),\sigma\neq \sigma_0}\sigma\bigr).\] 
 Let $\sigma\in \mathscr{D}(\brho)$  such that $\ell(\sigma)=1$. We know $\sigma=\sigma_j(\pi_0)$ for some $0\leq j\leq f-1$ by Remark \ref{remark-ell=1}. Let $\Sigma$ be the unique non-split extension of $\sigma_j(\pi_0)$ by $\sigma_0$; then  $\Sigma^{I_1}=\sigma_0^{
 I_1}\oplus\sigma_j(\pi_0)^{I_1}$ is $2$-dimensional by \cite[Prop. 4.13]{BP}. We have $\Hom_K(\Sigma,V)=0$; if not, we would have $\Sigma\hookrightarrow V$ and therefore $\Sigma\oplus \sigma\hookrightarrow V$ which would contradict the assumption $V^{I_1}=D_1(\brho)$ which is multiplicity free (\cite[Cor. 13.5]{BP}). In all we get
\[\rsoc_K(V/\pi_0)\subseteq \bigl(\bigoplus_{\sigma\in\mathscr{D}(\brho^{\rm ss}),\ell(\sigma)=1}\sigma\bigr)\oplus \bigl(\bigoplus_{\sigma\in\mathscr{D}(\brho),\ell(\sigma)\geq 2}\sigma\bigr).\]
As in case (i), we show that $V/\pi_0$ admits a unique irreducible sub-$G$-representation. For this, we show that any sub-representation $\pi_1$ of $V/\pi_0$ contains $\oplus_{\sigma\in\mathscr{D}(\brho^{\rm ss}),\ell(\sigma)=1}\sigma$ in its $K$-socle. Indeed,  if $\pi_1$ contains some $\sigma\in \mathscr{D}(\brho)$ with $\ell(\sigma)\geq 2$, then   the  same argument as in the proof of \cite[Thm. 15.4]{BP}, using the process $\chi\mapsto \chi^s$,  shows that $\pi_1$ also contains another $\sigma'\in \mathscr{D}(\brho)$ with $\ell(\sigma')<\ell(\sigma)$. Remark that, although it is not necessary for us, we can guarantee that $\sigma'$ belongs to $\mathscr{D}(\brho)$, not just to $\mathscr{D}(\brho^{\rm ss})$. So we may assume $\ell(\sigma)=1$ at the beginning. Still using the process $\chi\mapsto \chi^s$, we claim that $\pi_1$ contains all other $\sigma'\in\mathscr{D}(\brho^{\rm ss})$ with $\ell(\sigma')=1$. The argument is similar to that of case (i) above but slightly different, as follows. 

Let $D_{0,0}(\brho):=D_0(\brho)\cap \pi_0$ and let $D_{0,1}(\brho)$ be the maximal sub-$K$-representation of $D_0(\brho)/D_{0,0}(\brho)$ which does not contain any element of $\mathscr{D}(\brho^{\rm ss})$ of length $\geq 2$.  By \cite[Lem. 5.1]{Hu13}, the $K$-socle of $D_{0,1}(\brho)$ is exactly $\oplus_{\sigma\in\mathscr{D}(\brho^{\rm ss}), \ell(\sigma)=1}$. By \cite[Thm. 13.1]{BP}, $D_{0,1}(\brho)$ is a sub-$K$-representation of $D_{0,1}(\brho^{\rm ss})$ (see \cite[Thm. 15.4(ii)]{BP} for this notation), since it does not contain any element of $\mathscr{D}(\brho^{\rm ss})$ of length $0$ or $\geq 2$.   Moreover, $D_{0,1}(\brho)^{I_1}$ is stable under the action of $\smatr{0}1p0$ and compatible with that of $D_{0,1}(\brho)^{I_1}$ because they are both multiplicity free. In all, we are reduced to check the claim in the case $\brho=\brho^{\rm ss}$, which is done in \cite[Thm. 1.54(ii)]{BP}.

To conclude, we just let $\pi_1$ be the sub-representation of $V/\pi_0$ generated by $\oplus_{\sigma\in\mathscr{D}(\brho^{\rm ss}),\ell(\sigma)=1}$.  The inclusion (\ref{equation-soc_pi1}) follows from the claim and the equality when $f=2$ is obvious.
\end{proof}
 \begin{remark} 
In \cite{EGS}, it is  shown that for some $V\in \mathcal{V}(\brho)$ coming from cohomology of Shimura curves, the condition (ii) of Proposition \ref{prop-extension-typeA} is verified.
Moreover, for such a $V$,  it is hoped  that (\ref{equation-soc_pi1}) is always an equality.
\end{remark}

\subsection{The case $f=2$}\label{subsection-f=2}

We have seen that non-split $G$-extensions of a supersingular representation by a principal series exist (under the conditions which will be checked in \S\ref{section-global}). In this section, we  deduce from this  the existence of non-split extensions of the converse type in the case $f=2$, namely extensions of the form
 \[0\ra \pi_1\ra *\ra \pi_2\ra 0\]
with $\pi_1$ supersingular and $\pi_2$ a principal series.
\medskip

We keep notations in the previous subsection.  
 
\begin{proposition}\label{prop-extension-typeB}
Assume $f=2$. Let $\pi_0=\Ind_{{P}}^G\chi$ and $\pi_1$ be a supersingular representation. If $\Ext^1_G(\pi_1,\pi_0)\neq0$, then $\Ext^1_G(\Ind_{{P}}^G\chi^s\alpha,\pi_1)\neq0$.
\end{proposition}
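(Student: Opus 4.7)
\emph{Plan.} The plan is to translate both the hypothesis and the target into statements about the derived ordinary part $R^1\Ord_P\pi_1$, and then exploit the special nature of the case $f=2$. Using the Weyl-element isomorphism $\Ind_P^G\chi^s\alpha\cong\Ind_{\overline{P}}^G\chi\alpha^{-1}$ (with $\alpha^s=\alpha^{-1}$, since $\alpha=\omega\otimes\omega^{-1}$) together with the Grothendieck spectral sequence derived from the adjunction of Proposition~\ref{prop-Ord}(i),
\[
E_2^{ij}=\Ext^i_T(U,R^j\Ord_P V)\Longrightarrow \Ext^{i+j}_G(\Ind_{\overline{P}}^G U,V),
\]
applied with $U=\chi\alpha^{-1}$ and $V=\pi_1$, and combined with the vanishings $\Ord_P\pi_1=R^2\Ord_P\pi_1=0$ from Proposition~\ref{prop-Ord}(iv) (with $n=f=2$), one obtains the reformulation
\[
\Ext^1_G(\Ind_P^G\chi^s\alpha,\pi_1)\;\cong\;\Hom_T(\chi\alpha^{-1},\,R^1\Ord_P\pi_1).
\]
It therefore suffices to exhibit a nonzero $T$-equivariant morphism $\chi\alpha^{-1}\hookrightarrow R^1\Ord_P\pi_1$.

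To produce such an embedding, I would apply $R^\bullet\Ord_P$ to $0\to\pi_0\to E\to\pi_1\to 0$. With Proposition~\ref{prop-Ord}(iii) giving $\Ord_P\pi_0=\chi^s$ and $R^2\Ord_P\pi_0=\chi\otimes\alpha^{-1}$, and the vanishings for $\pi_1$, the associated long exact sequence reads
\[
\cdots\to R^1\Ord_P E\to R^1\Ord_P\pi_1\xrightarrow{\,\delta\,}\chi\otimes\alpha^{-1}\to R^2\Ord_P E\to 0.
\]
A dual form of the same spectral-sequence analysis (applied to compute $\Ext^1_G(\pi_1,\Ind_P^G\chi)$) identifies the extension class $[E]\in\Ext^1_G(\pi_1,\pi_0)$ with the connecting map $\delta$, so that the hypothesis $[E]\neq 0$ forces $\delta\neq 0$; since $\chi\otimes\alpha^{-1}$ is simple as a $T$-module, $\delta$ is surjective.

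The remaining and main obstacle is to promote the surjection $\delta$ to a $T$-equivariant splitting of $0\to\ker(\delta)\to R^1\Ord_P\pi_1\to\chi\otimes\alpha^{-1}\to 0$, equivalently to produce the embedding $\chi\otimes\alpha^{-1}\hookrightarrow R^1\Ord_P\pi_1$ required above. This is precisely where the hypothesis $f=2$ should intervene in an essential way: $R^1\Ord_P$ is then the unique intermediate derived functor, and one expects to establish the splitting either through a Poincar\'e-type self-duality relating $R^1\Ord_P\pi_1$ with $R^1\Ord_{\overline{P}}\pi_1$ (thereby exchanging sub- and quotient-objects after an appropriate twist), or via Kohlhaase's smooth duality $S^f$ applied to the given extension, noting that $\Ind_P^G\chi^s\alpha=S^f(\pi_0)$ by \cite[Prop.\ 5.4]{Ko} and that supersingular representations are expected to be self-dual in middle degree. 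Establishing this splitting is the technical heart of the argument.
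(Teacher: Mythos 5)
Your skeleton is the same as the paper's (rewrite the target as $\Hom_T(\chi\alpha^{-1},R^1\Ord_P\pi_1)$ via Emerton's exact sequence \cite[(3.7.5)]{Em2}, then feed the given extension $0\to\pi_0\to E\to\pi_1\to0$ into the long exact sequence for $\Ord_P$), but the two steps that carry the actual content are not proved. First, your assertion that a ``dual form of the spectral-sequence analysis'' computing $\Ext^1_G(\pi_1,\Ind_P^G\chi)$ identifies the connecting map $\delta\colon R^1\Ord_P\pi_1\to R^2\Ord_P\pi_0\cong\chi\alpha^{-1}$ with the class $[E]$ is unjustified: the relevant adjoint of $\Ind_P^G$ is the Jacquet functor $(-)_N$, which in characteristic $p$ is only right exact, and no such spectral sequence is on the table; nothing formal gives you $[E]\neq0\Rightarrow\delta\neq0$. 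The paper obtains surjectivity of $\delta$ by proving $R^2\Ord_P E=0$ (Lemma \ref{prop-HOrd}): since $R^2\Ord_P(-)=(-)_N\otimes\alpha^{-1}$ when $f=2$ and $(\pi_1)_N=0$ for $\pi_1$ supersingular (Proposition \ref{prop-Ord}(iv), resting on Pa\v{s}k\={u}nas' irreducibility of $\pi_1|_P$), it suffices to show $(\pi_0)_N\to E_N$ vanishes; if not, $E_N\cong\chi$ is one-dimensional and the adjunction $\Hom_G(E,\Ind_P^G E_N)\cong\Hom_T(E_N,E_N)\neq0$ yields a nonzero map $E\to\pi_0$, which by Schur splits the extension, contradicting $[E]\neq0$. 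Some argument of this concrete kind, using non-splitness through the adjunction, is needed where you appeal to an unproven duality.

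Second, the step you defer as ``the technical heart'' --- passing from the surjection $R^1\Ord_P\pi_1\twoheadrightarrow\chi\alpha^{-1}$ to an embedding $\chi\alpha^{-1}\hookrightarrow R^1\Ord_P\pi_1$ --- does not require any Poincar\'e-type or Kohlhaase-type duality, nor a splitting of your sequence $0\to\ker\delta\to R^1\Ord_P\pi_1\to\chi\alpha^{-1}\to0$. The representation $R^1\Ord_P\pi_1$ is admissible and smooth over the \emph{commutative} group $T$: a lift of a generator of the quotient is fixed by an open subgroup of $T\cap K$, which by commutativity acts trivially on the whole cyclic $T$-submodule it generates, so that submodule is finite-dimensional by admissibility; since there are no non-trivial extensions between non-isomorphic smooth characters of $T$, this finite-dimensional module breaks into character blocks, the $\chi\alpha^{-1}$-block is nonzero (it surjects onto $\chi\alpha^{-1}$), and hence $\chi\alpha^{-1}$ occurs in the socle. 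This is exactly the one-sentence argument the paper uses; with it, and with the corrected proof of surjectivity of $\delta$, your outline closes up into the paper's proof of Proposition \ref{prop-extension-typeB}.
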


\begin{proof}
Let \begin{equation}\label{equation-extension-A}0\ra \pi_0\ra V\ra \pi_1\ra0\end{equation}
be a non-split extension.
Apply the functor $\Ord_P$  and using   Lemma \ref{prop-HOrd} below, we get a surjection
 \[ \R^1\Ord_P\pi_1\twoheadrightarrow \R^2\Ord_P\pi_0.\]
 By Proposition \ref{prop-Ord}(ii) (as $f=2$), we know $\R^2\Ord_P\pi_0\cong \chi\alpha^{-1}$,   hence $\R^1\Ord_P\pi_1$ is non-zero and admits a quotient isomorphic to $\chi\alpha^{-1}$. Because there is no non-trivial extension between two non-isomorphic $T$-characters, we deduce that $\R^1\Ord_P\pi_1$ also contains a sub-character isomorphic to $\chi\alpha^{-1}$.
The assertion follows from the long exact sequence \cite[(3.7.5)]{Em2} which implies that $\Ext^1_G(\Ind_{\overline{P}}^G\chi\alpha^{-1},\pi_1)\cong \Hom_T(\chi\alpha^{-1},\R^1\Ord_P(\pi_1))\neq0$.
\end{proof}

\begin{lemma}\label{prop-HOrd}
We have $\R^2\Ord_PV=0$.
\end{lemma}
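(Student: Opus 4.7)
The strategy is to identify $\R^2\Ord_P V$ with the usual Jacquet module $V_N$ and then show the latter vanishes. Since $L$ is unramified of degree $f=2$ over $\Q_p$, we have $n=[L:\Q_p]=2$, so Proposition \ref{prop-Ord}(ii) furnishes an isomorphism $\R^2\Ord_P V \cong V_N \otimes \alpha^{-1}$. Thus it suffices to show that $V_N = 0$.

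I would apply the right-exact Jacquet functor $(-)_N$ to the extension (\ref{equation-extension-A}), obtaining the exact sequence
\[(\pi_0)_N \ra V_N \ra (\pi_1)_N \ra 0.\]
Combining Proposition \ref{prop-Ord}(ii) and (iii) identifies $(\pi_0)_N$ with $\chi$. Moreover, the argument recalled in the proof of Proposition \ref{prop-Ord}(iv) shows that $\pi_1(N)$ is a nonzero $P$-subrepresentation of the $P$-irreducible representation $\pi_1$, hence $\pi_1(N) = \pi_1$ and $(\pi_1)_N = 0$. Consequently $V_N$ is a quotient of $\chi$, and the remaining task is to rule out $V_N \cong \chi$.

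Suppose for contradiction that $V_N \cong \chi$. By smooth Frobenius reciprocity and the triviality of $\chi$ on $N$,
\[\Hom_G(V, \pi_0) = \Hom_G(V, \Ind_P^G\chi) \cong \Hom_P(V, \chi) \cong \Hom_T(V_N, \chi) \neq 0,\]
so there exists a nonzero $G$-equivariant morphism $\phi: V \ra \pi_0$. Consider its restriction $\psi := \phi|_{\pi_0}$. If $\psi = 0$, then $\phi$ descends to a nonzero $G$-equivariant map $\pi_1 \ra \pi_0$, which is impossible as $\pi_0$ and $\pi_1$ are non-isomorphic absolutely irreducible representations over $\bFp$. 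Otherwise, $\psi$ is an automorphism of $\pi_0$ by Schur's lemma, so $\ker \phi \cap \pi_0 = 0$ and $\phi$ is surjective. Since $V$ has length $2$, a length count forces $V = \pi_0 \oplus \ker\phi$, contradicting the nonsplitness of (\ref{equation-extension-A}). Hence $V_N = 0$.

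The only step that demands any real care is the final Frobenius--Schur splitting argument in the last paragraph; the rest is formal manipulation of derived functors and Jacquet modules. No serious obstacle is anticipated, and in particular the dimension hypothesis $f = 2$ enters only through the identification $\R^n\Ord_P = (-)_N \otimes \alpha^{-1}$ with $n = 2$.
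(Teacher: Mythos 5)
Your proof is correct and follows essentially the same route as the paper: identify $\R^2\Ord_P$ with the twisted Jacquet functor, use right-exactness together with $(\pi_1)_N=0$ (supersingularity) to reduce to $(\pi_0)_N\to V_N$ being zero, and derive a contradiction from $\Hom_G(V,\pi_0)\neq0$ via adjunction. The only difference is that you spell out the final Schur-lemma splitting argument, which the paper compresses into one sentence.
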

\begin{proof}
By assumption $f=2$, Proposition \ref{prop-Ord}(iii) implies that $\R^2\Ord_PW=W_N\otimes \alpha^{-1}$ for any $G$-representation $W$.    The sequence (\ref{equation-extension-A}) induces an exact sequence $ (\pi_0)_N\ra V_N\ra (\pi_1)_N\ra0$. Since $(\pi_1)_N=0$ by Proposition \ref{prop-Ord}(iv), it suffices to prove that the map $(\pi_0)_N\ra V_N$ is zero. If not, then $V_N\cong (\pi_0)_N$ as $(\pi_0)_N$ is one dimensional, and the adjunction formula (see \cite[\S3.6]{Em1}) implies that
  \[\Hom_G(V,\Ind_P^G(\pi_0)_N)=\Hom_G(V,\pi_0)\neq 0,\]
As a consequence, the extension (\ref{equation-extension-A}) splits, giving the desired contradiction.  
\end{proof}

We have remarked that when $\brho$ is reducible, we hope that any $V\in \mathcal{V}(\brho)$ is a successive extension of the irreducible representations $(\pi_i)_{0\leq i\leq 2}$, with $\pi_0,\pi_2$ being principal series. It is easily checked that if we write $\pi_0=\Ind_{P}^G\chi$ for suitable $\chi$, then $\pi_2=\Ind_P^G\chi^s\alpha$, which is compatible with Proposition \ref{prop-extension-typeB}.

\begin{remark}\label{remark-Ko}
Note that $\pi_2$ is exactly the $f$-th smooth dual of $\pi_0$, in the sense of \cite[Def. 3.12 \& Prop. 5.4]{Ko}. 
\end{remark}

\section{Global results} \label{section-global}
We prove the main result of this article in this section. 
\medskip

 Let $F$ be a totally real number field. For each finite place $v$ of $F$, denote by $F_v$  the completion of $F$ at $v$. Write $G_{F}=\Gal(\overline{F}/F)$ and $G_{F_v}=\Gal(\overline{F_v}/F_v)$, and we identify $G_{F_v}$ with a subgroup of $G_F$ by fixing an embedding $\overline{F}\hookrightarrow \overline{F_v}$. We fix a finite extension $E$ of $\Q_p$, which serves as the coefficient field and is allowed to be enlarged. Write $\cO_E$ for the ring of integers of $E$, $k_E$ its residue field, and $\varpi_E$ a fixed uniformizer. 

\subsection{A theorem of Emerton} 

Let $D$ be a quaternion algebra over $F$ which splits  at exactly one infinite place denoted by $\tau$. Fix an isomorphism $D_{\tau}\overset{\mathrm{def}}{=}D\otimes_{F,\tau}\R\cong \mathrm{M}_2(\R)$. Let $D_{f}^{\times}=(D\otimes_{\Q}\A_f)^{\times}$. 

For any open compact subgroup $U\subset D_f^{\times}$, let $X_U$ be the projective smooth algebraic curve over $F$ associated to $U$ and consider the \'etale cohomology with coefficients in $A$ 
\[H^1_{\textrm{\'et}}(X_{U,\overline{\Q}},A),\]
where $A$ denotes one of $E$, $\cO_E$, or $\cO_E/\varpi_E^s$ for some $s>0$. For two open compact subgroups $V\subseteq U$ of $D_f^{\times}$, we have natural morphisms of algebraic curves $X_{V}\ra X_U$ defined over $F$, which induces a $\Gal(\overline{F}/F)$-equivariant map 
\[H^1_{\textrm{\'et}}(X_{U,\bar{\Q}},A)\ra H^1_{\textrm{\'et}}(X_{V,\bar{\Q}},A).\]
Define 
\[S^D(A):=\varinjlim H^1_{\textrm{\'et}}(X_{U,\bar{\Q}},A)\]
where the limit is taken over all the open compact subgroups $U\subset D_f^{\times}$. It carries a continuous action of $\Gal(\overline{F}/F)$ and a smooth admissible action of $D_{f}^{\times}$ commuting with each other.

Let $\brho:G_F\ra \GL_2(k_E)$ be an irreducible, continuous, totally odd representation. Assume that $\brho$ is modular, in the sense that $\Hom_{G_F}(\brho,S^D(k_E))\neq 0$. Let $\Sigma$ be a finite set of finite places of $F$ which contains all those places dividing $p$, or at which $U_w$ is not maximal, or  $D$ or $\brho$ is ramified. Define as usual the Hecke operators 
\[T_v=[\GL_2(\cO_{F_v})\matr{\varpi_v}001\GL_2(\cO_{F_v})],\ \ S_v=[\GL_2(\cO_{F_v})\matr{\varpi_v}00{\varpi_v}\GL_2(\cO_{F_v})]\]  
and let $\mathbb{T}^{\Sigma}(U)$ denote the commutative $A$-algebra generated by $T_v$ and $S_v$ for $v\notin \Sigma$. We let $\m_{\brho}^{\Sigma}=\m_{\brho}^{\Sigma}(U)$ denote the maximal ideal of $\mathbb{T}^{\Sigma}$ corresponding to $\brho$, i.e. satisfying
\[T_v\  \textrm{mod}\ \m_{\brho}^{\Sigma} \equiv \textrm{trace}(\brho(\mathrm{Frob}_v))\ \ \mathrm{and}\ \ \textbf{N}(v)S_v\  \textrm{mod}\ \m_{\brho}^{\Sigma}\equiv\det(\brho(\mathrm{Frob}_v))\]
for all $v\notin \Sigma$. Let 
\[S^D(U,k_E)[\m_{\brho}^{\Sigma}]=\{f\in S^D(U,k_E)|Tf=0\ \mathrm{for\ all\ }T\in \m_{\brho}^{\Sigma}\}.\]
By \cite[Lemma 4.6]{BDJ}, it is independent of $\Sigma$, so  denote it by $S^D(U,k_E)[\m_{\brho}]$. We can consider the direct limit over $U$ of the spaces $S^U(U,k_E)[\m_{\brho}]$ which yields $S^D(k_E)[\m_{\brho}]$.
Recall the following conjecture due to Buzzard, Diamond and Jarvis \cite[Conj. 4.9]{BDJ}.

\begin{conjecture}\label{conjecture-BDJ}
 The representation $S^D(k_E)[\m_{\brho}]$ of $G_F\times D_f^{\times}$ is isomorphic to a restricted tensor product \begin{equation}\label{equation-BDJ}S^D(k_E)[\m_{\brho}]\cong\brho\otimes(\otimes_{w}'\pi_{w}),\end{equation}
where $\pi_{w}$ is a smooth admissible representation of $D_{w}^{\times}$ such that 
\begin{itemize}
\item[$\bullet$] if $w$ does not divide $p$, then $\pi_{w}$ is the representation attached to $\brho_w:=\brho|_{G_{F_w}}$ by the modulo $\ell$ local Langlands and Jacquet-Langlands correspondence, see \cite[\S4]{BDJ};

\item[$\bullet$] if $w|p$, then $\pi_w\neq 0$; moreover if $F$ and $D$ are unramified at $v$, and $\sigma$ is an irreducible $k_E$-representation of $\GL_2(\cO_{F_w})$, then 
$$\Hom_{\GL_2(\cO_{F_w})}(\sigma,\pi_w)\neq0 \ \Longleftrightarrow \sigma\in W (\brho_w).$$  
Here $W(\brho_w)$ is a certain set of Serre weights associated to $\brho_w$ which, when $\brho_w$ is generic, coincides with $\mathscr{D}(\brho_w)$ (in \S\ref{section-local}) up to normalisation. 
\end{itemize}
\end{conjecture}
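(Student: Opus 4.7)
This is a well-known open conjecture, so my plan outlines the Taylor--Wiles--Kisin patching strategy that would attack it in stages. Let $M := S^D(\cO_E)_{\mathfrak{m}_{\brho}}^{\vee}$ (Pontryagin dual of the localisation of integral cohomology). The first step is to auxiliary-level patch $M$ into a finitely generated module $M_{\infty}$ over a large framed deformation ring $R_{\infty} = R_{\infty}^{\mathrm{loc}}[[x_1,\dots,x_h]]$, using the Taylor--Wiles primes machinery applied to the quaternionic automorphic side. The conjectural local-global compatibility at a place $w$ then translates into a statement about the action of the $w$-component of $R_{\infty}^{\mathrm{loc}}$ on $M_{\infty}$, factor by factor, which after undoing the patching unpatches to the desired tensor factorisation.

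At places $w\nmid p$, I would invoke the mod $p$ local-global compatibility results of Emerton--Helm (at non-split places) and classical Deligne--Carayol (in characteristic zero, descended via integral models of the Shimura curve): the action of the spherical Hecke algebra at primes outside $\Sigma$ determines $\pi_w$ uniquely as the mod $p$ reduction of the admissible representation attached by the local Langlands and Jacquet--Langlands correspondences to $\brho_w$. At places $w\mid p$, the nonvanishing $\pi_w \neq 0$ follows by a standard multiplicity argument from the modularity hypothesis and a choice of tame level at $w$, while the Serre-weight identification $\Hom_{\GL_2(\cO_{F_w})}(\sigma,\pi_w)\neq 0 \Longleftrightarrow \sigma \in W(\brho_w)$ is precisely the Buzzard--Diamond--Jarvis weight part. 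For generic $\brho_w$ at unramified $w$, this has been established by Gee and his collaborators (culminating in \cite{EGS}), using potentially Barsotti--Tate deformation rings and Kisin modules to match prescribed Serre weights with components of the generic fibre of the local Galois deformation space, and the comparison $W(\brho_w) = \mathscr{D}(\brho_w)$ is the translation between the BDJ recipe and the $D_0(\brho_w)$-recipe recalled in \S\ref{subsection-BP}.

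The central obstacle, and the reason the conjecture is not available in full generality, lies in the factorisation as a restricted tensor product together with the control of the local factor $\pi_w$ at $w\mid p$. This amounts to a \emph{freeness of patched modules} result, requiring $M_{\infty}$ to be locally free of rank one over an appropriate quotient of $R_{\infty}^{\mathrm{loc}}$ tensored with (an integral model of) the Breuil--Pa\v{s}k\=unas family $\mathcal{V}(\brho_w)$. Currently no approach is available for $F_w \neq \Q_p$ without essentially assuming a full mod $p$ local Langlands correspondence for $\GL_2(F_w)$, since the local target category is extremely pathological: supersingular representations of $\GL_2(L)$ for $L\neq \Q_p$ are not even of finite presentation (\cite{Sc}, \cite{Hu12}), so one cannot yet pin down the local factor intrinsically. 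The paper at hand therefore treats the conjecture as a working hypothesis and, assuming it, extracts structural information about $\pi_w$ by applying Emerton's ordinary parts functor to $\pi^D(\brho)$; this is the route taken in \S\ref{section-local} and completed in the main theorem.
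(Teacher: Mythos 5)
The paper offers no proof of this statement: it is recalled verbatim from \cite[Conj.~4.9]{BDJ} as Conjecture~\ref{conjecture-BDJ} and is only ever invoked as a hypothesis (``assume the decomposition (\ref{equation-BDJ}) holds'') in Theorem~\ref{thm-main} and Corollary~\ref{cor-main}. You correctly identify it as open and do not claim a proof --- your survey of the Taylor--Wiles--Kisin patching strategy, the $w\nmid p$ compatibility inputs, and the weight-part results of \cite{EGS} goes beyond anything the paper does, but your conclusion that the conjecture must be treated as a working hypothesis is exactly the paper's treatment.
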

 
From now on,  assume that $D$ splits at some finite place $v$ lying above $p$.  For $U^v$ an open subgroup of $\prod_{w\neq v}\cO_{D_w}^{\times}$, we write
\[S^D(U^v,A):=\varinjlim S^D(U^vU_v,A) \]
where the inductive limit is taken over all compact open subgroups $U_v$ of $D_v^{\times}\cong\GL_2(F_v)$.  
The following result is due to Emerton (see Definition \ref{definition-ab-S} for the notation). 
\begin{theorem}\label{theorem-Emerton}
For any $n\geq 0$, the action of $\Gal(\bQp/F)$ on the cokernel of the embedding
\[\Ord_P\bigl(S^D(U^v,k_E)_{\m_{\brho}}\bigr)^{\mathrm{ab},S_v}\hookrightarrow \Ord_P\bigl(S^D(U^v,k_E)_{\m_{\brho}}\bigr)\]
factors through $G_{F_v}^{\rm ab}$.
\end{theorem}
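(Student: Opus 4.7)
The plan is to deduce this as a direct consequence of the main local-global compatibility theorem for ordinary parts of completed cohomology of Shimura curves proved in \cite{Em3}. The core idea is that $\Ord_P$ applied to the Hecke-localized cohomology $S^D(U^v,k_E)_{\m_{\brho}}$ admits a description in terms of the universal ordinary-at-$v$ deformation of $\brho$, and the functor $(\cdot)^{\mathrm{ab},S_v}$ from \S\ref{subsection-definition} is precisely tailored to single out the $G_{F_v}$-sub of the resulting triangulation, leaving the $G_{F_v}$-quotient (which is \emph{a priori} a character of $G_{F_v}^{\mathrm{ab}}$) in the cokernel.

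First I would recall Emerton's theorem in the formulation suited to our setting. The module $S^D(U^v,k_E)_{\m_{\brho}}$ carries commuting actions of $G_F$ and $\GL_2(F_v)$, and Emerton's theorem asserts that $\Ord_P$ of this module fits, as a $G_F\times T$-representation, into a short exact sequence whose sub- and quotient-pieces are controlled via local class field theory by the two diagonal characters $\psi_1,\psi_2$ of the ordinary triangulation of (deformations of) $\brho|_{G_{F_v}}$. More precisely, the $T$-action on each piece is related to $\psi_1,\psi_2$ via the local Artin isomorphism $\iota:S_v\simto F_v^\times\hookrightarrow G_{F_v}^{\mathrm{ab}}$ fixed in \S\ref{subsection-definition}, after the explicit local twist by the character $\alpha$ that already appears in Proposition \ref{prop-Ord}.

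Next I would match this structure with the inclusion $(\cdot)^{\mathrm{ab},S_v}\hookrightarrow(\cdot)$. By Definition \ref{definition-ab-S} and the anti-diagonal embedding (\ref{equation-anti-diagonal}), the subspace $(\cdot)^{\mathrm{ab},S_v}$ consists exactly of vectors on which $G_{F_v}$ acts through $G_{F_v}^{\mathrm{ab}}$ and where this action matches the inverse $S_v$-action via $\iota$. Under Emerton's correspondence, this picks out the $\psi_1$-piece, i.e.\ the $G_{F_v}$-subrepresentation of the ordinary triangulation. The cokernel is then the $\psi_2$-piece, on which the global $G_F$-action factors through its restriction to $G_{F_v}$, and that restriction factors through $G_{F_v}^{\mathrm{ab}}$ because $\psi_2$ is a character, giving the desired conclusion.

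The main obstacle is the dictionary: one must reconcile Emerton's Hecke-eigenvalues-to-Galois-characters normalization in \cite{Em3} with the anti-diagonal convention of (\ref{equation-anti-diagonal}) and with the Artin map normalization used here (uniformizers $\mapsto$ geometric Frobenii), and track the $\alpha$-twist from Proposition \ref{prop-Ord}(ii)--(iii). A secondary subtlety is that the assertion concerns the $G_F$-action on actual vectors, not merely on semisimplifications or pseudo-characters; this is precisely the content of \cite{Em3}, which produces honest $G_F$-stable decompositions of $\Ord_P$ of completed cohomology rather than only trace data, and once the bookkeeping is aligned the theorem follows directly from Emerton's statement.
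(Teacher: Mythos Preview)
Your proposal misidentifies both what \cite{Em3} contains and what the proof here must supply. The reference \cite{Em3} treats $\GL_{2/\Q}$ and modular curves; there is no ``local-global compatibility theorem for ordinary parts of completed cohomology of Shimura curves'' in \cite{Em3} to quote. The paper's proof is precisely that Emerton's argument for Theorem~5.6.11 of \cite{Em3} --- which \emph{is} literally the statement that the cokernel of $(\cdot)^{\mathrm{ab},S}$ carries an abelian local Galois action --- transfers to the present quaternionic setting over a totally real $F$. The actual content of the proof is to isolate which of Emerton's auxiliary lemmas need checking: Lemmas~5.6.7--5.6.8 go through once $|\cdot|_p$ is replaced by the normalised $|\cdot|_v$, and the surjectivity of
\[S^D(U^v,\cO_E)_{\m_{\brho}}^{N_{0,v}}/\varpi_E \longrightarrow S^D(U^v,k_E)_{\m_{\brho}}^{N_{0,v}}\]
(needed to apply Lemma~5.6.3) follows by passing to the limit over the congruence subgroups $I_{r,v}$, using that $U^vI_{r,v}$ is neat for $r$ large. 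None of this appears in your sketch.

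Separately, the mechanism you describe --- a short exact sequence on $\Ord_P(S^D(U^v,k_E)_{\m_{\brho}})$ whose pieces are governed by diagonal characters $\psi_1,\psi_2$ of an ordinary triangulation of $\brho_v$ --- is both stronger than Theorem~5.6.11 and presupposes that $\brho_v$ is reducible, which the present theorem does not assume. Emerton's proof of 5.6.11 does not construct such a triangulation on the mod~$p$ object directly; it lifts to characteristic~$0$ Hida theory, where Eichler--Shimura--type relations force the unramified Frobenius eigenvalue to match the Hecke $U_p$-eigenvalue on the ordinary part, and then reduces modulo~$p$. The functor $(\cdot)^{\mathrm{ab},S_v}$ is engineered to record exactly this Hecke--Galois matching, not to project onto a pre-existing ``$\psi_1$-line''. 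So while your intuition points toward the right phenomenon, the normalization bookkeeping you flag is not where the work lies, and the argument you outline does not establish the theorem in the generality stated.
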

\begin{proof}
This is Theorem 5.6.11 of \cite{Em3} in our setting. The proof of Emerton works equally in this case. 
In fact, Lemmas 5.6.7 and 5.6.8 \emph{loc. cit.} hold true, with the only change being to replace the absolute value $|\ |_p$ by  $|\ |_v$, the absolute value on $F_v$ normalised as $|\varpi_{v}|_{v}:=q_v^{-1}$. 
We also need 
\[S^D(U^v, \cO_E)_{\m_{\brho}}^{N_{0,v}}/\varpi_L S^D(U^v,\cO_E)_{\m_{\brho}}^{N_{0,v}}\ra S^D(U^v,k_E)_{\m_{\brho}}^{N_{0,v}}\]
to be surjective to apply Lemma 5.6.3 \emph{loc.cit.}. This is a consequence, by taking inductive limit over $r$, of the isomorphisms  (provided $r$ is large enough so that $U^vI_{r,v}$ is neat)
\[S^D(U^v,\cO_E)_{\m_{\brho}}^{I_{r,v}}/\varpi_L S^D(U^v,\cO_E)_{\m_{\brho}}^{I_{r,v}}\ra S^D(U^v,k_E)_{\m_{\brho}}^{I_{r,v}}.\]
Here,  we denote by  $I_{r,v}$ the open subgroup of $\GL_2(\cO_{F_v})$ defined as $\{g\equiv \smatr{1}{*}01\mod \varpi_v^{r}\}$.
\end{proof}

\subsection{Application}

Assuming the existence of the decomposition as in Conjecture  \ref{conjecture-BDJ}, we get the following information about the local factor $\pi_v$ (for $v|p$) when $\brho|_{G_{F_v}}$ is reducible with scalar endomorphisms. 
 
\begin{theorem}\label{thm-main}
Assume the decomposition (\ref{equation-BDJ}) holds. If for some $v|p$, $\brho_v$ is reducible indecomposable and isomorphic to $\smatr{\psi_1}{*}0{\psi_2}$, with $\psi_1\neq\psi_2$, then $\Ord_P(\pi_{D,v}(\brho))$ is an admissible semi-simple $T_v$-representation. 
\end{theorem}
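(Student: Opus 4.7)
The strategy is to apply Emerton's Theorem~\ref{theorem-Emerton} to
\[V := \Ord_P\bigl(S^D(U^v,k_E)_{\m_{\brho}}\bigr)\]
for a sufficiently small tame level $U^v$ and then descend the resulting $G_F\times S_v$-constraint to $\Ord_P(\pi_v)$, where $\pi_v := \pi_{D,v}(\brho)$. Set $Y := \bigl({\bigotimes}_{w\neq v}'\pi_w\bigr)^{U^v}$, a finite-dimensional $k_E$-vector space with trivial $G_F$- and $\GL_2(F_v)$-actions. The decomposition (\ref{equation-BDJ}) then gives
\[S^D(U^v,k_E)[\m_{\brho}] \;\cong\; \brho\otimes_{k_E}\pi_v\otimes_{k_E}Y\]
as $G_F\times\GL_2(F_v)$-modules, and applying the left-exact $G_F$-equivariant functor $\Ord_P$ to the inclusion $S^D(U^v,k_E)[\m_{\brho}]\hookrightarrow S^D(U^v,k_E)_{\m_{\brho}}$ produces a $G_F\times T_v$-equivariant inclusion
\[W := \brho\otimes_{k_E}\Ord_P(\pi_v)\otimes_{k_E}Y\;\hookrightarrow\;V.\]

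The key step is to apply Emerton's Theorem~\ref{theorem-Emerton} to $V$ and then restrict: by Lemma~\ref{lemma-GxS}, the sub $W$ inherits the property that $G_{F_v}$ acts on $W/W^{\mathrm{ab},S_v}$ through $G_{F_v}^{\mathrm{ab}}$. Writing the non-split sequence $0\ra L_{\psi_1}\ra \brho|_{G_{F_v}}\ra L_{\psi_2}\ra 0$ of $G_{F_v}$-modules, with $L_{\psi_i}$ the line on which $G_{F_v}$ acts via $\psi_i$, the hypothesis $\psi_1\neq\psi_2$ combined with indecomposability gives
\[W^{\mathrm{ab}} = L_{\psi_1}\otimes\Ord_P(\pi_v)\otimes Y,\qquad W^{\mathrm{ab},S_v} = L_{\psi_1}\otimes\Ord_P(\pi_v)[\tilde\psi_1]\otimes Y,\]
where $\tilde\psi_1\colon S_v\cong F_v^{\times}\ra k_E^{\times}$ is the character corresponding to $\psi_1$ under local Artin and $\Ord_P(\pi_v)[\tilde\psi_1]$ denotes the $\tilde\psi_1$-eigenspace for the $S_v$-action. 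Choose $h\in H_{F_v}:=\Ker(G_{F_v}\twoheadrightarrow G_{F_v}^{\mathrm{ab}})$ with cocycle value $c(h)\neq 0$; such an $h$ exists precisely because the image of $\brho|_{G_{F_v}}$ is non-abelian ($\brho|_{G_{F_v}}$ is non-split and $\psi_1\neq\psi_2$). Pick a basis $\{f_1,f_2\}$ of $\brho$ lifting the filtration. Then $(h-1)(f_2\otimes w\otimes y)=c(h)\cdot f_1\otimes w\otimes y$, and the inclusion $(h-1)W\subseteq W^{\mathrm{ab},S_v}$ (equivalent to the conclusion of Emerton's theorem on $W$) forces $\Ord_P(\pi_v)=\Ord_P(\pi_v)[\tilde\psi_1]$: that is, $S_v$ acts on $\Ord_P(\pi_v)$ by the scalar character $\tilde\psi_1$.

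To finish, $\pi_v$ admits a central character $\zeta\colon F_v^{\times}\ra k_E^{\times}$ (from the Hecke action of the center at $v$, determined by $\det\brho|_{G_{F_v}}$), so $Z_v$ acts on $\Ord_P(\pi_v)$ as the scalar $\zeta$. Since $T_v=S_v\cdot Z_v$ with $S_v\cap Z_v=\{1\}$, $T_v$ acts on $\Ord_P(\pi_v)$ by the single character $\tilde\psi_1\boxtimes\zeta$; admissibility of $\Ord_P(\pi_v)$ (preserved by $\Ord_P$) then forces it to be finite-dimensional over $k_E$, and a finite-dimensional $T_v$-representation on which $T_v$ acts through a single character is visibly a direct sum of copies of that character, hence semisimple. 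The main obstacle in executing this plan rigorously is to verify that the inclusion $W\hookrightarrow V$ obtained from the $\m_{\brho}$-torsion really does capture enough of the structure for Lemma~\ref{lemma-GxS} to force the desired cocycle constraint on all of $\Ord_P(\pi_v)$ (rather than just on the part landing in $W$); this requires care with the relationship between $[\m_{\brho}]$-torsion and $\m_{\brho}$-localization, and more specifically with how the $G_F\times T_v$-structure of $V$ is built up by successive extensions from copies of $W$.
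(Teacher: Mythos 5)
Your proposal is correct, and its skeleton coincides with the paper's: both apply Theorem \ref{theorem-Emerton} to $\Ord_P\bigl(S^D(U^v,k_E)_{\m_{\brho}}\bigr)$, pass via Lemma \ref{lemma-GxS} to the subobject $W:=\Ord_P\bigl(S^D(U^v,k_E)[\m_{\brho}]\bigr)\cong\brho_v\otimes\Ord_P(\pi_v)\otimes Y$ furnished by (\ref{equation-BDJ}) (the paper writes this as $(\brho_v\otimes\pi_v)^{\oplus r}$, with $Y$ your multiplicity space), and exploit that $\brho_v$ is non-split with $\psi_1\neq\psi_2$, which is exactly what guarantees a non-abelian image and hence an element $h$ of $\Ker(G_{F_v}\twoheadrightarrow G_{F_v}^{\rm ab})$ acting non-trivially on $\brho_v$. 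The difference is the endgame. The paper argues by contradiction: if $\Ord_P(\pi_v)$ were not semisimple, it would have a subquotient $\epsilon_{\delta}$ as in (\ref{equation-delta}), and a second application of Lemma \ref{lemma-GxS} to the $4$-dimensional subquotient $\brho_v\otimes\epsilon_{\delta}$ gives a contradiction, because $(\brho_v\otimes\epsilon_{\delta})^{\mathrm{ab},S_v}$ is at most one-dimensional while the cokernel surjects onto $\brho_v\otimes\chi$. You instead compute $W^{\rm ab}$ and $W^{\mathrm{ab},S_v}$ for the whole of $W$ and conclude directly that $S_v$ acts on $\Ord_P(\pi_v)$ through the single character $\psi_1\circ\iota$; this is a stronger statement (a weak form of local-global compatibility for the ordinary part), and semisimplicity then follows immediately once $Z_v$ is known to act by a character (no finiteness is really needed at that point: if every element of $T_v$ acts by a scalar, every subspace is a subrepresentation). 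Note that this central-character input is not free, but the paper's proof uses exactly the same fact, implicitly, when it takes $\epsilon_{\delta}$ of the form (\ref{equation-delta}) with $\delta$ trivial on the center, hence non-trivial on $S_v$; so the two arguments rest on identical hypotheses. Finally, the obstacle you flag at the end is not a real one: Lemma \ref{lemma-GxS} applies to arbitrary subquotients, and by left exactness of $\Ord_P$ the space $W$ is an honest $G_{F_v}\times S_v$-subobject of $\Ord_P\bigl(S^D(U^v,k_E)_{\m_{\brho}}\bigr)$, so no analysis of how the localization is built up from copies of $W$ is required.
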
 
\begin{proof}
The natural inclusion $S^D(U^v,k_E)[\m_{\brho}]\hookrightarrow S^D(U^v,k_E)_{\m_{\brho}}$ induces an inclusion $\Ord_P(S^D(U^v,k_E)[\m_{\brho}])\hookrightarrow \Ord_P(S^D(U^v,k_E)_{\m_{\brho}})$, hence   Lemma \ref{lemma-GxS}
 and 
Theorem \ref{theorem-Emerton}  imply that the action of $G_{F_v}$ on the cokernel of
$$\bigl(\Ord_P(S^D(U^v,k_E)[\m_{\brho}])\bigr)^{\mathrm{ab},S_v}\hookrightarrow \Ord_P(S^D(U^v,k_E)[\m_{\brho}])$$
factors  through $G_{F_v}^{\rm ab}$. 

Assume that the decomposition (\ref{equation-BDJ}) holds. Then, as a representation of $G_{F_v}\times \GL_2(F_v)$, $S^D(U^v,k_E)[\m_{\brho}]$ is isomorphic to $(\brho_v\otimes \pi_v)^{\oplus r}$ for some integer $r\geq 1$. Apply Lemma \ref{lemma-GxS} again, we deduce that the action of $G_{F_v}$ on the cokernel of \[\Ord_P(\brho_v\otimes\pi_v)^{\mathrm{ab},S_v}\hookrightarrow \Ord_P(\brho_v\otimes \pi_v)\] factors through $G_{F_v}^{\rm ab}$.  

Now assume that $\Ord_P(\pi_{v})$ is not semi-simple. Because there is no non-trivial extension between two non-isomorphic $k_E$-characters of $T_v$, there must exist some character $\chi:T_v\ra k_E^{\times}$ and  some non-trivial extension as (\ref{equation-delta})
\[0\ra \chi\ra \epsilon_{\delta}\ra \chi\ra0\]
such that $\epsilon_{\delta}$ appears as a \emph{subquotient} of  $\Ord_P(\pi_v)$. This implies that $\Ord_P(\brho_v\otimes\pi_v)$, which is equal to $\brho_v\otimes \Ord_P(\pi_v)$, has a  $G_{F_v}\times T_v$-equivariant, hence $G_{F_v}\times S_v$-equivariant, subquotient of the form  
$\brho_v\otimes \epsilon_{\delta}$. 
Applying Lemma \ref{lemma-GxS} to it shows that the action of $G_{F_v}$ on the cokernel of 
\begin{equation}\label{equation-cokernel}
(\brho_v\otimes\epsilon_{\delta})^{\mathrm{ab},S_v}\hookrightarrow \brho_v\otimes \epsilon_{\delta}\end{equation} 
factors through $G_{F_v}^{\rm ab}$. We claim that $(\brho_v\otimes \epsilon_{\delta})^{\mathrm{ab},S_v}$ is at most $1$-dimensional over $k_E$, while $\brho_v\otimes \epsilon_{\delta}$ is $4$-dimensional, and  the cokernel of (\ref{equation-cokernel}) admits a quotient isomorphic to $\brho_v\otimes \chi$. Because  $\brho_v$ is reducible and indecomposable by assumption, the action of $G_{F_v}$ on $\brho_v\otimes\chi$ does not factor through $G_{F_v}^{\rm ab}$, hence a contradiction which shows that $\Ord_P(\pi_v)$ is semi-simple. 

To verify the claim, we choose a basis $\{v_1,v_2\}$  of $\brho_v$ over $k_E$ such that $g\cdot v_1=\psi_1(g)v_1$ for $g\in G_{F_v}$; also choose a basis $\{w_1,w_2\}$ of $\epsilon_{\delta}$ such that $s\cdot w_1=\chi(s)w_1$ and $s\cdot w_2=\chi(s)(w_2+\delta(s)w_1)$ for $s\in T_v$. It is clear that $V^{\rm ab}\subset \psi_1\otimes \epsilon_{\delta}$, as $\brho_v$ is indecomposable and $\psi_1\neq \psi_2$. Since the action of $S_v$ on $\brho_v\otimes \epsilon_{\delta}$ is via the anti-diagonal embedding $S_v\hookrightarrow G_{F_v}^{\rm ab}\times S_v$, $s\mapsto (\iota(s),s^{-1})$, we get  
\[s\cdot(v_1\otimes w_1)=\psi_1(\iota(s))v_1\otimes \chi(s^{-1})w_1\]
\[s\cdot(v_1\otimes w_2)=\psi_1(\iota(s))v_1\otimes [\chi(s^{-1})(w_2+\delta(s^{-1})w_1)].\]
Because $\delta:F_v^{\times}\cong S_v\ra k_E$ is non-trivial and the extension $\epsilon_{\delta}$ admits a central character, there exists $s\in S_v$ such that $\delta(s^{-1})=-\delta(s)\neq0$, hence 
\[(\brho_v\otimes\epsilon_{\delta})^{\mathrm{ab},S_v}\subseteq k_E (v_1\otimes w_1).\]
The claim follows easily from this. 
\end{proof}

Combined with results proved in \S\ref{section-local}, we get the following corollary.
\begin{corollary}\label{cor-main}
Keep assumptions in Theorem \ref{thm-main}. Assume moreover that $\brho_v$ is generic and $F_v$ is unramified over $\Q_p$ of degree $\geq 2$.   Then the $\GL_2(F_v)$-representation $\pi_v$ contains a sub-representation $\pi$ which is of length $2$ and fits into a non-split exact sequence
\[0\ra \pi_0\ra \pi\ra\pi_1\ra0\]
with $\pi_0$ a principal series and $\pi_1$  supersingular.  If moreover $[F_v:\Q_p]=2$, and if $\pi_2$ denotes the $f_v$-th smooth dual of $\pi_0$, then  $\Ext^1_{\GL_2(F_v)}(\pi_2,\pi_1)\neq 0$. 
\end{corollary}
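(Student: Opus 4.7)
The plan is to derive Corollary \ref{cor-main} directly from Theorem \ref{thm-main}, combined with the structural results of \S\ref{section-local}. First I would invoke Theorem \ref{thm-main}, which under the running hypotheses gives that $\Ord_P(\pi_v)$ is semi-simple as a $T_v$-representation. On the other hand, from the global construction of $\pi_v$ together with the Breuil--Pa\v{s}k\=unas machinery recalled in \S\ref{subsection-BP}, the representation $\pi_v$ lies in the family $\mathcal{V}(\brho_v)$; since $\brho_v$ is reducible indecomposable, Lemma \ref{lemma-V-socle} then shows that its $G$-socle is an irreducible principal series, which I denote by $\pi_0$.

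The next step is to reprise the argument of Proposition \ref{prop-extension-typeA}, but weakening the hypothesis ``$\Ord_P(V)$ is one-dimensional'' to ``$\Ord_P(V)$ is semi-simple'', in order to show that $\pi_v/\pi_0$ contains a supersingular sub-representation $\pi_1$. Suppose for contradiction that every irreducible sub-representation of $\pi_v/\pi_0$ is non-supersingular. Since $\pi_0 = \rsoc_G(\pi_v)$, any such $\pi' \hookrightarrow \pi_v/\pi_0$ satisfies $\Ext^1_G(\pi',\pi_0) \neq 0$, and Proposition \ref{prop-extension-PS}(ii) (which uses $F_v \neq \Q_p$) forces $\pi' \cong \pi_0$. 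The preimage of $\pi'$ in $\pi_v$ is then of the form $\Ind_P^G \epsilon_\delta$ for some non-trivial $\delta \in \Hom(F_v^\times,\bFp)$, via the classification (\ref{equation-Ind(delta)}). Applying Proposition \ref{prop-Ord}(iii), the ordinary part of this preimage equals $\epsilon_\delta^s$, a non-split self-extension of $\chi^s$; by left-exactness of $\Ord_P$ it injects into $\Ord_P(\pi_v)$, contradicting semi-simplicity.

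Once the supersingular subquotient $\pi_1 \hookrightarrow \pi_v/\pi_0$ is in hand, I take $\pi \subseteq \pi_v$ to be its preimage. Then $\pi$ has length $2$ and fits into $0 \to \pi_0 \to \pi \to \pi_1 \to 0$; non-splitness is automatic, since a splitting would embed the supersingular $\pi_1$ into $\pi_v$, contradicting $\rsoc_G(\pi_v) = \pi_0$. This settles the first assertion of the corollary.

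For the second assertion, when $[F_v:\Q_p]=2$, the non-split extension just produced gives $\Ext^1_G(\pi_1,\pi_0) \neq 0$. Writing $\pi_0 = \Ind_P^G \chi$, Proposition \ref{prop-extension-typeB} then yields $\Ext^1_G(\Ind_P^G \chi^s\alpha,\pi_1) \neq 0$, and Remark \ref{remark-Ko} identifies $\Ind_P^G \chi^s\alpha$ with $\pi_2$, the $f_v$-th smooth dual of $\pi_0$, as desired. The main subtle point throughout is the passage from semi-simplicity of $\Ord_P(\pi_v)$ to the exclusion of non-trivial self-extensions of $\pi_0$ inside $\pi_v$; this is where the explicit formula $\Ord_P(\Ind_P^G\epsilon_\delta) = \epsilon_\delta^s$ from Proposition \ref{prop-Ord}(iii) does the work, and once it is invoked the remainder reduces to applying the already established local machinery of \S\ref{section-local}.
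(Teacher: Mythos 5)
Your proposal is correct in substance and follows the same overall strategy as the paper: Theorem \ref{thm-main} supplies the semi-simplicity of $\Ord_P(\pi_v)$, the local results of \S\ref{section-local} convert this into the length-two subrepresentation, and the second assertion is obtained exactly as in the paper from Proposition \ref{prop-extension-typeB} together with Remark \ref{remark-Ko}. The one genuine variation is in the middle step: the paper first invokes \cite[Thm.~3.7.1]{BD11} to get that $\rsoc_{\GL_2(\cO_{F_v})}(\pi_v)$ is multiplicity free, upgrades semi-simplicity of $\Ord_P(\pi_v)$ to one-dimensionality, and then quotes Proposition \ref{prop-extension-typeA} verbatim; you instead keep only semi-simplicity and re-run the existence half of the proof of Proposition \ref{prop-extension-typeA}, observing that a principal-series subobject of $\pi_v/\pi_0$ would force a non-split self-extension $\Ind_P^G\epsilon_\delta\subset\pi_v$ whose ordinary part $\epsilon_\delta^s$ is a non-split $T_v$-extension embedding into the semi-simple $\Ord_P(\pi_v)$ --- a correct and slightly more economical argument (at the price of forgoing the finer uniqueness/socle information of Proposition \ref{prop-extension-typeA}, which the corollary does not claim anyway).

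The one step you must repair is the assertion that $\pi_v$ lies in $\mathcal{V}(\brho_v)$ ``from the global construction of $\pi_v$ together with the Breuil--Pa\v{s}k\=unas machinery recalled in \S\ref{subsection-BP}''. This does not follow from the local machinery: \S\ref{subsection-BP} only constructs the family $\mathcal{V}(\brho_v)$, and nothing in the definition of $\pi_v$ as a local factor of (\ref{equation-BDJ}) identifies it with a member of that family. The needed input is the theorem of Breuil--Diamond (the explicit construction of $\pi_v'$ of Breuil--Pa\v{s}k\=unas type, \cite[Thm.~3.7.1, Cor.~3.7.4]{BD11}, identifying $\pi_v'$ with $\pi_v$ once (\ref{equation-BDJ}) is assumed), which is precisely the citation the paper supplies at this point; without it you have no control on the $G$-socle of $\pi_v$ (needed to apply Lemma \ref{lemma-V-socle}), nor on its central character (needed to invoke the classification (\ref{equation-Ind(delta)}) via Proposition \ref{prop-extension-PS}). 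Once that reference replaces the hand-wave, your argument is complete and matches the paper's level of detail.
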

\begin{proof}
Under our assumptions together with the genericity condition of $\brho_v$, we can apply \cite[Thm. 3.7.1]{BD11} to get that $\rsoc_{\GL_2(\cO_{F_v})}(\pi_v)$ is of multiplicity 1. Then Theorem \ref{thm-main} implies that $\Ord_{P_v}(\pi_v)$ is one-dimensional,  so that the result follows from Propositions \ref{prop-extension-typeA} and \ref{prop-extension-typeB} and Remark \ref{remark-Ko}.
\end{proof} 

We can get rid of the assumption (\ref{equation-BDJ}) in Corollary \ref{cor-main} as follows (note however that then $\pi_v$ does not make sense).    

\begin{proof}[Proof of Corollary \ref{cor-intro}]  
In \cite{BD11}, Breuil and Diamond explicitly constructed a certain $k_E$-representation $\pi_v'$ of $\GL_2({F_v})$, and  showed that $\pi_v'$ must be the local factor $\pi_v$ if (\ref{equation-BDJ}) holds (see \cite[Cor. 3.7.4]{BD11}). We have a $G_{F_v}\times \GL_2(F_v)$-equivariant embedding   $\brho_v\otimes \pi_v'\hookrightarrow S^D(U^v,k_E)[\m_{\brho}]$, so that $\Ord_P(\pi_v')$ is semi-simple by the same proof of Theorem \ref{thm-main}.  Moreover, under genericity assumption on $\brho_v$, \cite[Thm. 3.7.1]{BD11} says that the $K_v$-socle of $\pi_v'$ is of multiplicity one, hence $\Ord_P(\pi_v')$ is of dimension 1.  We deduce that $\pi_v'$ contains a non-split extension of $\pi_1$ by $\pi_0$. The last assertion of Corollary \ref{cor-intro} is deduced as in Corollary \ref{cor-main}.  
 \end{proof}

\section{Appendix}\label{section-appendix}

In this section, we prove, contrast to Proposition \ref{prop-extension-typeB}, that there is no non-trivial extension of principal series by supersingular representations, when $L$ is a local field of characteristic $p$.  Notations are the same as in Section \ref{section-preparation}.\medskip

 We fix a uniformizer $\varpi$ of $L$.  For an irreducible  smooth  representation $\sigma$ of $K$, we view it as a representation of $KZ$ by letting $\smatr{\varpi}00{\varpi}$ act trivially. Consider the compact induction $\cInd_{KZ}^G\sigma$ and recall that the $\bFp$-algebra $\End_G(\cInd_{KZ}^G\sigma)$ is isomorphic to $\bFp[T]$ where $T$ is a certain Hecke operator (normalized as in \cite{BL}). To describe the action of $T$, for $g\in G$ and $v\in \sigma$, denote by $[g,v]\in \cInd_{KZ}^G\sigma$ the function supported on $KZg^{-1}$ and such that $[g,v](g^{-1})=v$. 
Let $v_0\in \sigma^{I_1}$ be a non-zero vector and recall that  $\id$ denotes the identity matrix of $\GL_2(L)$,   then the action of $T$ on $\cInd_{KZ}^G\sigma$ is characterized by the formula (and by the $G$-equivariance of $T$):
\begin{equation}\label{equation-Tv}T([\id,v_0])=\sum_{\lambda\in\F_q}\Bigl[\matr{\varpi}{[\lambda]}01,v_0\Bigr]+\epsilon(\sigma)[\Pi,v_0]\end{equation}
where $\epsilon(\sigma)=1$ if $\sigma$ is of dimension $1$ and $\epsilon(\sigma)=0$ otherwise.
 
 In \cite[Def. 2.7]{Hu12} is defined an operator by the formula: 
\[S=\summ_{\lambda\in\F_q}\matr{\varpi}{[\lambda]}01\in\bFp[G]\]
which acts on any $G$-representation. By (\ref{equation-Tv}) we get immediately 
\begin{equation}\label{equation-S=T}T([\id,v_0])=S([\id,v_0])\end{equation}
when $\sigma$ is of dimension $\geq 2$. 
Set $S^1=S$ and by induction $S^n=S\circ S^{n-1}$ for $n\geq 1$.  We then have $S^{n+m}=S^n\circ S^m=S^m\circ S^n$. Recall some useful properties of $S$.

\begin{lemma}\label{lemma-S2}
Let $\pi$ be a smooth representation of $G$ and let $v\in \pi$.

(i) If $v$ is an eigenvector of $\cH$, then so is $Sv$ for the same eigencharacter.

(ii)  If $v$ is fixed by $N_0$, then so is $Sv$. 

(iii) If $v$ is fixed by $N_0$, then so is $h\cdot v$ for $h\in T_1$, and we have the formula
$h\cdot Sv=S(h\cdot v)$
for all $h\in T_1$.

 (iv) If $v$ is fixed by  $\smatr{1+\p_L}{\cO_L}{\p_L^{n+1}}{1+\p_L}$ with $n\geq 1$,  then $Sv$ is fixed by $\smatr{1+\p_L}{\cO_L}{\p_L^{n}}{1+\p_L}$. In particular, if $v$ is fixed by $I_1$, then so is $Sv$.

(v) Assume that $L$ is of characteristic $p$ and $\pi$ is a supersingular representation of $G$. Then there exists an integer $n\gg 0$ such that $S^nv=0$.
\end{lemma}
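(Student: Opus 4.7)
The plan is uniform across parts (i)--(iv): for each subgroup element $g$ of interest, I will rewrite $g \cdot \smatr{\varpi}{[\lambda]}{0}{1}$ in the form $\smatr{\varpi}{[\lambda']}{0}{1} \cdot h$, where $h$ lies in the subgroup fixing $v$ and $\lambda \mapsto \lambda'$ is a bijection of $\F_q$; pushing $g$ through the defining sum of $S$ then yields the claim.

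For (i), the identity $\smatr{[\alpha]}{0}{0}{[\beta]} \smatr{\varpi}{[\lambda]}{0}{1} = \smatr{\varpi}{[\alpha\beta^{-1}\lambda]}{0}{1} \smatr{[\alpha]}{0}{0}{[\beta]}$ is immediate. For (ii), if $n = \smatr{1}{a}{0}{1}$ and $\bar a \in \F_q$ is the reduction of $a$, then $[\lambda]+a \equiv [\lambda + \bar a]$ mod $\p_L$, which lets me factor $n\smatr{\varpi}{[\lambda]}{0}{1} = \smatr{\varpi}{[\lambda+\bar a]}{0}{1}\smatr{1}{r}{0}{1}$ with $r \in \cO_L$; the rightmost factor is in $N_0$, so it fixes $v$. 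For (iii), the first claim is that $T_1$ normalizes $N_0$, so $hv$ is again $N_0$-fixed; the formula $h\cdot Sv = S(hv)$ follows from the direct computation $h \smatr{\varpi}{[\lambda]}{0}{1} = \smatr{\varpi}{[\lambda]}{0}{1}\, h\, n'$ with $n' \in N_0$ and the fact that $v$ is $N_0$-fixed.

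For (iv), I compute that for $g = \smatr{1}{0}{\varpi^n c}{1}$ with $n \geq 1$, the conjugate
\[\smatr{\varpi}{[\lambda]}{0}{1}^{-1} g \smatr{\varpi}{[\lambda]}{0}{1} = \smatr{1-[\lambda]\varpi^n c}{-[\lambda]^2 \varpi^{n-1} c}{\varpi^{n+1} c}{1+[\lambda]\varpi^n c}\]
lies in $\smatr{1+\p_L}{\cO_L}{\p_L^{n+1}}{1+\p_L}$, which fixes $v$ by hypothesis. Combined with (ii) and (iii), together with the Iwahori factorization of the target group, this yields the claimed fixation under $\smatr{1+\p_L}{\cO_L}{\p_L^{n}}{1+\p_L}$. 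The ``in particular'' clause follows from the same computation with $n=0$: the resulting $h$ lies in $\smatr{1+\p_L}{\cO_L}{\p_L^{2}}{1+\p_L}\subset I_1$, and $\smatr{1}{0}{\p_L}{1}$-invariance of $Sv$ is obtained directly.

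Part (v) is the main obstacle. The plan is first to reduce, by iterating (iv) on the smoothness of $v$, to the case $v \in \pi^{I_1}$; then by (i) one may further assume $v$ is an $\cH$-eigenvector. The $KZ$-submodule $\langle KZ\cdot v\rangle$ gives a quotient weight $\sigma$ and a $G$-equivariant map $\cInd_{KZ}^G \sigma \to \pi$, under which $v$ is the image of $[\id, v_0]$ for a suitable $v_0 \in \sigma^{I_1}$. By (\ref{equation-S=T}), provided $\dim \sigma \geq 2$ one has $Sv = Tv$ in $\pi$, and since $\pi$ is supersingular the spherical Hecke operator $T$ acts nilpotently (in fact as zero on each irreducible subquotient), so $Sv = 0$. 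The delicate case is when some $\cH$-isotypic component of $\pi^{I_1}$ forces $\sigma$ to be one-dimensional: here one exploits the characteristic-$p$ hypothesis on $L$ together with an appeal to the Barthel--Livn\'e classification to rule out supersingular subquotients, or to show $S$ still sends such a $v$ into the ``$\dim\sigma\geq 2$'' part after finitely many iterations. The hard point is this last step, namely handling the one-dimensional weights uniformly and showing the nilpotence degree can be bounded so that $S^n v = 0$ for some $n$ depending on $v$.
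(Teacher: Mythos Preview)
Your treatment of (i)--(iv) is correct and is exactly the kind of direct matrix manipulation that the paper (via \cite{Hu12}) has in mind; the only quibble is that in the ``in particular'' clause of (iv) you should invoke the main statement with $n=1$ (since $I_1$ is the group with lower-left entry in $\p_L$, so $v$ fixed by $I_1$ is a fortiori fixed by the group with lower-left in $\p_L^2$), rather than redoing the computation ``with $n=0$''.

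Part (v), however, has a genuine gap, and it is not where you think it is. Your first move is to ``reduce, by iterating (iv) on the smoothness of $v$, to the case $v\in\pi^{I_1}$.'' But (iv) has as hypothesis that $v$ is fixed by $\smatr{1+\p_L}{\cO_L}{\p_L^{n+1}}{1+\p_L}$ for some $n\geq 1$, and this group contains all of $N_0=\smatr{1}{\cO_L}{0}{1}$. Smoothness only guarantees that $v$ is fixed by some congruence subgroup $K_m$, which for $m\geq 1$ does \emph{not} contain $N_0$. So you cannot even get started with (iv). Producing $N_0$-invariance for $S^nv$ from an arbitrary smooth vector is precisely the step that consumes the characteristic-$p$ hypothesis in \cite[Thm.~5.1]{Hu12}: one exploits that $\cO_L\cong\F_q[[t]]$ with the Teichm\"uller map additive and the finite subgroup $\F_q\subset N_0$ genuinely a subgroup, so that the averaging $\sum_{\lambda\in\F_q}\smatr{1}{\lambda}{0}{1}$ interacts well with the contraction by $\smatr{\varpi}{0}{0}{1}$; the relevant control is \cite[Lem.~5.2]{Hu12}, which tracks the $N_0$-span $\langle N_0\cdot S^n v\rangle$ rather than any individual stabiliser.

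Note also that your steps after the reduction barely touch characteristic $p$: the identity $T^n[\id,v_0]=S^n[\id,v_0]$ (for $\dim\sigma\geq 2$) and the nilpotence of $T$ on the finite-dimensional space $\Hom_{KZ}(\sigma,\pi)$ hold over any $L$. Since the conclusion of (v) is specific to characteristic $p$ (indeed the paper's Corollary~\ref{cor-intro} shows the analogue of Theorem~\ref{thm-super} fails for $L/\Q_p$), the characteristic-$p$ input must already be doing real work before you reach the Hecke-operator argument. You have correctly flagged the one-dimensional-$\sigma$ case as delicate, but the earlier reduction is where the argument actually breaks.
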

\begin{proof}
(i), (ii) and (iii) follow from an easy calculation, see \cite[Lem. 2.8]{Hu12} for the details. (iv)  is proved in \cite[\S4.2, (4.4)]{Hu12}. (v) is just \cite[Thm. 5.1]{Hu12}, which requires $L$ to be of characteristic $p$.
\end{proof}

\begin{theorem}\label{thm-super}
Assume that $L$ is of characteristic $p$. Let $\pi_1$ be a supersingular representation and $\pi_2$ be a principal series or a special series of $G$. Then $\Ext^1_G(\pi_2,\pi_1)=0$.
\end{theorem}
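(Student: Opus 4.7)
The plan is to construct a $G$-equivariant section of the projection $V\twoheadrightarrow\pi_2$ for any extension $0\to\pi_1\to V\to\pi_2\to 0$, using crucially Lemma~\ref{lemma-S2}(v): the operator $S$ is locally nilpotent on any supersingular representation when $L$ has characteristic~$p$.

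Using Barthel--Livn\'e, write $\pi_2\cong\cInd_{KZ}^G\sigma/(T-\mu)$ after absorbing a central character twist, where $\sigma$ is the $K$-socle weight of $\pi_2$ and $\mu\in\bFp^\times$ (nonzero because $\pi_2$ is not supersingular). Let $v_0\in\sigma^{I_1}$ and let $\bar v_0\in\pi_2$ be the image of $[\id,v_0]$, so that $\bar v_0$ generates $\pi_2$ as a $G$-representation and $(S-\mu)\bar v_0=0$ by \eqref{equation-S=T}. The same framework applies uniformly to principal and special series.

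\emph{Step 1 (Hecke-eigen, $I_1$-fixed lift of $\bar v_0$).} Starting from an arbitrary lift $\tilde v\in V$, one may first average over the finite prime-to-$p$ group $\cH$ to enforce that $\tilde v$ has the same $\cH$-eigencharacter $\chi_\sigma$ as $v_0$. Promoting $\tilde v$ to an $I_1$-fixed vector is the principal technical step: it amounts to showing that the $\chi_\sigma$-isotypic component of the connecting map $\pi_2^{I_1}\to H^1(I_1/Z_1,\pi_1)$ vanishes on $\bar v_0$, which is precisely where the characteristic-$p$ hypothesis on $\pi_1$ must intervene. Granting $\tilde v\in V^{I_1}$, set $x:=(S-\mu)\tilde v$; by Lemma~\ref{lemma-S2}(iv) one has $x\in\pi_1^{I_1}$, and by Lemma~\ref{lemma-S2}(v) the operator $S$ is locally nilpotent on $\pi_1$. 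Hence the finite sum $u:=-\sum_{n\geq 0}\mu^{-n-1}S^n x$ lies in $\pi_1^{I_1}$ and satisfies $(S-\mu)u=x$, so $v:=\tilde v-u\in V^{I_1}$ lifts $\bar v_0$, is $\cH$-eigen of character $\chi_\sigma$, and satisfies $(S-\mu)v=0$.

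\emph{Step 2 (splitting via Frobenius reciprocity).} The vector $v$ generates a finite-dimensional cyclic $K$-subrepresentation $\langle K\cdot v\rangle\subseteq V$ whose image in $\pi_2$ is the $K$-socle $\sigma$. Arguing that $\langle K\cdot v\rangle\cong\sigma$ (using irreducibility of $\pi_1$ and the Hecke relation $(S-\mu)v=0$ to rule out extra $K$-factors coming from $\pi_1$), Frobenius reciprocity produces a $G$-map $\Phi\colon\cInd_{KZ}^G\sigma\to V$ with $[\id,v_0]\mapsto v$. The relation $(S-\mu)v=0$ together with \eqref{equation-S=T} and $G$-equivariance of $\Phi$ forces $\Phi$ to vanish on the $G$-subrepresentation $(T-\mu)\cdot\cInd_{KZ}^G\sigma$. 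Hence $\Phi$ descends to a $G$-map $\bar\Phi\colon\pi_2\to V$ with $\bar v_0\mapsto v$, giving the desired splitting.

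The main obstacles are the two $K$-theoretic claims: (i) the existence of an $I_1$-fixed lift in Step~1, and (ii) the identification $\langle K\cdot v\rangle\cong\sigma$ in Step~2. Both are the places where the pathological homological behaviour of smooth $K$-representations in characteristic~$p$ must be circumvented by appealing to the characteristic-$p$ local nilpotency of $S$ on $\pi_1$ from Lemma~\ref{lemma-S2}(v); this is the essential input that distinguishes the characteristic-$p$ situation from the mixed characteristic case treated in Proposition~\ref{prop-extension-typeB}.
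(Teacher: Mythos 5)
Your overall strategy (lift the $I_1$-fixed $\cH$-eigenvector of $\pi_2$ to a Hecke eigenvector of $V$ and split via Frobenius reciprocity) is exactly the paper's strategy, and your geometric-series trick $u=-\sum_{n\ge 0}\mu^{-n-1}S^nx$ is a clean way to arrange $(S-\mu)v=0$ once local nilpotency of $S$ on $\pi_1$ is available. But there is a genuine gap at the step you explicitly label ``Granting $\tilde v\in V^{I_1}$'': the existence of an $I_1$-fixed lift of $\bar v_0$ is not a routine reduction, it is the heart of the theorem, and you give no argument for it. It cannot be formal, because in mixed characteristic the analogous connecting map $\pi_2^{K_1}\to H^1(K_1/Z_1,\pi_1)$-type obstruction does \emph{not} vanish (cf.\ Proposition \ref{prop-partial_delta} and the non-split extensions produced by Theorem \ref{thm-main}); so the characteristic-$p$ input must be used precisely here, not merely invoked. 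The paper's proof spends almost all of its effort on this point: starting from an $\cH$-eigenlift $w$, one first makes $w$ fixed by $N_0$ by applying $S^n$ and using that $\langle N_0\cdot S^nw\rangle$ is spanned by $S^nw$ and $S^n$ of finitely many vectors of $\pi_1$ (\cite[Lem.~5.2]{Hu12}) which die by Lemma \ref{lemma-S2}(v); then, using that $S$ commutes with $T_1$ on $N_0$-fixed vectors (Lemma \ref{lemma-S2}(iii)) and smoothness to kill $(h-1)w\in\pi_1$ uniformly in $h\in T_1$, one makes $w$ fixed by $I_1\cap P$; finally Lemma \ref{lemma-S2}(iv) upgrades this to $I_1$-fixedness. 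None of this appears in your write-up, so as it stands the proposal assumes the key point.

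Two smaller issues. First, for a special series your opening normalization $\pi_2\cong\cInd_{KZ}^G\sigma/(T-\mu)\otimes\chi\circ\det$ is not correct: a twist of Steinberg is only a proper \emph{quotient} of such a compact induction, so your map $\bar\Phi$ lands a priori only on a surjection from $\cInd_{KZ}^G\sigma/(T-\mu)\otimes\chi\circ\det$ onto $V$-image covering $\pi_2$, and one must argue (as the paper does, by comparing the image with $\pi_1$) that this still splits the extension. Second, the identification $\langle K\cdot v\rangle\cong\sigma$ in your Step 2 is asserted with only a parenthetical; the correct justification is the argument of \cite[Lem.~4.1]{Pa07} applied to the $I_1$-fixed $\cH$-eigenvector $v$ satisfying $Sv=\mu v$, not ``irreducibility of $\pi_1$''. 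With the $I_1$-lifting argument supplied and these two points repaired, your proof coincides with the paper's.
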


\begin{proof}
Consider an extension of $G$-representations
\[0\ra \pi_1\ra V\ra\pi_2\ra0.\]
Because $\pi_2$ is a principal series or a special series, it always contains a sub-$K$-representation of dimension $\geq 2$, say $\sigma$,  and $\pi_2$ is a quotient of $\cInd_{KZ}^G\sigma/(T-\lambda)\otimes \chi\circ\det$ for some $\lambda\in\bFp^{\times}$ and $\chi:L^{\times}\ra \bFp^{\times}$, see \cite[Thm. 33]{BL}.
Let $\bar{w}\in \sigma^{I_1}\subset \pi_2$ be a non-zero vector, which is unique up to a scalar and is automatically an eigenvector of $\cH$. Choose a lifting $w\in V$ of $\bar{w}$ arbitrarily. Since the order of $\cH$ is prime to $p$, we may choose $w$ so that it is an eigenvector of $\cH$. Denote by $M=\langle N_0.w\rangle\subset V$ the sub-$N_0$-representation generated by $w$ and choose vectors $v_i\in \pi_1$,    where $i$ runs over a finite set,  such that $\{w,v_i\}$ forms a basis of $M$. Then by \cite[Lem. 5.2]{Hu12}, for any $n>0$, $\langle N_0.S^nw\rangle$  is spanned by the vectors $\{S^nw,S^nv_i\}$. But, since $M$ is finite dimensional and $\pi_1$ is supersingular, Lemma \ref{lemma-S2}(v) implies that  $S^nv_i=0$  for $n\gg0$, hence $S^nw$ is fixed by $N_0$ for such $n$. 
Since $S\bar{w}\overset{(\ref{equation-S=T})}{=}T\bar{w}=\lambda \bar{w}$  in $\pi_2$  and $\lambda\neq0$, $\frac{1}{\lambda^n}S^nw$ is still a lifting of $\bar{w}$, so we may assume the chosen lifting $w$ is fixed by $N_0$. Since $\bar{w}$ is fixed by $I_1$, we have (in particular) $(h-1)\bar{w}=0$ for  any $h\in T_1$, hence $(h-1)w\in \pi_1$.  By Lemma \ref{lemma-S2}(v), there exists $n_h\gg0$ such that $S^{n_h}(h-1)w=0$. The representation $V$ being smooth, $\langle T_1.w\rangle$ is finite dimensional, so we may find $n$ large enough so that $S^n(h-1)w=0$ for all $h\in T_1$. But, Lemma \ref{lemma-S2}(iii) implies 
\[0=S^n(h-1)w=(h-1)S^nw,\]
so that $S^nw$ is fixed by $I_1\cap P$. Again, by Lemma \ref{lemma-S2}(iv), up to enlarge $n$, $S^nw$ is fixed by $I_1$. Replacing $w$ by $\frac{1}{\lambda^n}S^nw$, we obtain a lifting $w$ of $\bar{w}$ which is fixed by $I_1$.

Next, consider the vector $(S-\lambda)w$ which belongs to $\pi_1$ as its image in $\pi_2$ is zero. By Lemma \ref{lemma-S2}(v) again, there exists $n\gg0$ such that $S^n(S-\lambda)w=0$. Replacing $w$ by $\frac{1}{\lambda^n}S^nw$, we get a lifting $w$ of $\bar{w}$  satisfying $Sw=\lambda w$.
 
Summarizing,  we obtain a lifting $w\in V^{I_1}$ of $\bar{w}$ which is an eigenvector of $\cH$ and satisfies $Sw=\lambda w$.    By  the proof of \cite[Lem. 4.1]{Pa07}
\[\tau:=\langle K\cdot w\rangle \subset V\]
is irreducible and isomorphic to $\sigma$. Moreover, the fact $Sw=\lambda w$ implies that the $G$-morphism $\cInd_{KZ}^{G}\tau\otimes\chi\circ\det\ra V$ (here $\chi$ is the character appeared at the beginning of the proof) induced by Frobenius reciprocity must factor through  (note that $Sw=Tw$ by (\ref{equation-S=T}))
\[\phi:\cInd_{KZ}^{G}\tau/(T-\lambda)\otimes \chi\circ\det\ra V.\]  
This shows that $V$ contains   $\pi_1$ as a sub-representation and hence splits.
\end{proof}
 
\textbf{Acknowledgements} 
The author would like to thank V. Pa\v{s}k\=unas for some useful discussion and C. Breuil for several comments on the first version of the paper.


  \[\underline{\hspace{6cm}}\]

\bigskip

 \end{document}